\numberwithin{equation}{section}
\newtheorem{theorem}{Theorem}
\newaliascnt{proposition}{theorem}
\newtheorem{proposition}[proposition]{Proposition}
\newaliascnt{lemma}{theorem}
\newtheorem{lemma}[lemma]{Lemma}
\newaliascnt{corollary}{theorem}
\theoremstyle{definition}
\newtheorem{example}{Example}
\newtheorem{remark}{Remark}
\newaliascnt{definition}{theorem}
\newtheorem{definition}[definition]{Definition}
\newcounter{hypA'}
\newenvironment{hyp}[1]{
\begin{enumerate}[label=(\textbf{\sf #1}-\arabic*),resume=hyp#1]\begin{sf}}
{\end{sf}\end{enumerate}}
\def\rme{\mathrm{e}}
\def\Cset{\mathsf{C}}
\def\Xset{\mathsf{X}}
\def\Yset{\mathsf{Y}}
\def\Zset{\mathsf{Z}}
\def\Xsigma{\mathcal{X}}
\def\Ysigma{\mathcal{Y}}
\def\Zsigma{\mathcal{Z}}
\def\met{\Delta}
\def\rset{\ensuremath{\mathbb{R}}}
\def\zset{\ensuremath{\mathbb{Z}}}
\def\cset{\ensuremath{\mathbb{C}}}
\def\mcf{\mathcal{F}}
\def\eqsp{}	
\def\PE{\mathbb{E}}
\def\PP{\mathbb{P}}
\def\rmd{\mathrm{d}}
\newcommandx\LogInt[5][1=\theta,4=,5=Y]{\upsilon_{#4}^{#1}\langle {#5}_{#2:#3} \rangle}
\newcommand{\as}{\mbox{-a.s.}}
\newcommandx{\aslim}[1]{\ensuremath{\stackrel{#1-\text{a.s.}}{\longrightarrow}}}
\newcommandx\sequence[3][2=n,3=\zset]{\ensuremath{\{ #1_{#2}\,:\, #2 \in #3\}}}
\newcommand\nsequence[2]{\ensuremath{\{ #1\,:\, #2\}}}
\newcommandx\dsequence[4][3=n,4=\zset]{\ensuremath{\{ (#1_{#3},#2_{#3})\,:\,#3 \in #4\}}}
\newcommand{\CPE}[3][]
{\ifthenelse{\equal{#1}{}}%
{\mathbb{E}\left[\left. #2 \, \right| #3 \right]}
{\mathbb{E}_{#1}\left[\left. #2 \, \right| #3 \right]}
}
\newcommand{\CPEu}[4][]
{\ifthenelse{\equal{#1}{}}%
{\mathbb{E}\left[\left. #2 \, \right| #3 \right]}
{\mathbb{E}^{#1}_{#2}\left[\left. #3 \, \right| #4 \right]}
}
\newcommand{\CPEv}[3][]
{\ifthenelse{\equal{#1}{}}%
{\mathbb{E}_\star\left[\left. #2 \, \right| #3 \right]}
{\mathbb{E}_\star_{#1}\left[\left. #2 \, \right| #3 \right]}
}
\newcommand{\CPP}[3][]
{\ifthenelse{\equal{#1}{}}%
{\mathbb{P}\left[\left. #2 \, \right| #3 \right]}
{\mathbb{P}_{#1}\left[\left. #2 \, \right| #3 \right]}
}
\newcommand{\CPPu}[3][]
{\ifthenelse{\equal{#1}{}}%
{\mathbb{P}\left[\left. #2 \, \right| #3 \right]}
{\mathbb{P}^{#1}\left[\left. #2 \, \right| #3 \right]}
}
\newcommand{\tCPPu}[3][]
{\ifthenelse{\equal{#1}{}}%
{\tilde{\mathbb{P}}\left[\left. #2 \, \right| #3 \right]}
{\tilde{\mathbb{P}}^{#1}\left[\left. #2 \, \right| #3 \right]}
}
\newcommandx{\chunk}[3]%
{\ensuremath{#1}_{#2:#3}}
\def\1{\mathbbm{1}}
\newcommandx\proj[2][1=,2=]{
\ifthenelse{\equal{#1}{}}
{\operatorname{X}}
{\operatorname{X}_{#1:#2}}i
}
\newcommand{\eqdef}{:=}
\newcommand{\set}[2]{\left\{#1:#2\right\}}
\newcommand{\setvect}[2]{\left(#1\right)_{#2}}
\newcommandx\lkdM[3][1=,3=]{
\ifthenelse{\equal{#2}{}}
{ \mathsf{L}_{#1}^{#3}}
{ \mathsf{L}_{#1}^{#3}\langle #2\rangle}
}
\newcommandx\lkdMStat[3][1=,3=]{
\ifthenelse{\equal{#2}{}}
{ \bar{\mathsf{L}}_{#1}^{#3}}
{ \bar{\mathsf{L}}_{#1}^{#3}\langle #2 \rangle}
}
\newcommandx\lkd[3][1=,3=]{
\ifthenelse{\equal{#2}{}}
{ \ell_{#1}^{#3}}
{ \ell_{#1}^{#3}\langle #2\rangle}
}
\newcommandx\lkdStat[3][1=,3=]{
\ifthenelse{\equal{#2}{}}
{ \bar \ell_{#1}^{#3}}
{ \bar \ell_{#1}^{#3}\langle #2 \rangle}
}
\def\initmle{z^{(\text{\tiny{i}})}}
\def\initmlex{x^{(\text{\tiny{i}})}}
\def\initmlexi{\xi^{(\text{\tiny{i}})}}
\def\initmley{y^{(\text{\tiny{i}})}}
\def\initmleu{u^{(\text{\tiny{i}})}}
\newcommandx\phimomentinit[1][1=\theta]{\phi^{#1}}
\def\piX{\pi_{\Xset}}
\def\piY{\pi_{\Yset}}
\newcommand{\mlY}[1]{\hat\theta_{#1}}
\newcommand{\argmax}{\mathop{\mathrm{argmax}}}
\newcommandx{\normLip}[2][1=]{\mathrm{Lip}(#2;#1)}
\newcommandx{\wass}[2][1]{\lVert #2\rVert_{#1}}
\newcommandx{\wasser}[3][1=]{\mathcal{W}_{#1}\left(#2,#3\right)}
\newcommandx{\proho}[3][1=]{\mathcal{P}_{#1}\left(#2,#3\right)}
\newcommandx{\dobru}[2][1=]{\dobrush_{#1}\left( #2\right)}
\newcommand{\dobrush}{\Delta}
\newcommand{\Uset}{\mathrm U}
\newcommand{\Usigma}{\mathcal U}
\newcommandx{\Pcan}[2][1=,2=]{\mathbb{P}_{#1}^{#2}}
\newcommandx{\Ecan}[2][1=,2=]{\mathbb{E}_{#1}^{#2}}
\newcommand{\Xinit}{\xi}
\newcommandx\cesp[4][1=,2=]{\ensuremath{{\mathbb E}_{#1}^{#2}\left[ \left. #3 \right| #4 \right]}}
\newcommandx{\f}[3][1=\theta, 3=]{\psi^{#1}_{#3}\langle #2 \rangle}
\newcommandx{\tf}[2][1=\theta]{\tilde{\psi}^{#1}\langle #2 \rangle}
\newcommandx{\F}[2][1=\theta]{\Psi^{#1}\langle #2 \rangle}
\newcommandx{\tF}[2][1=\theta]{\tilde{\Psi}^{#1}\langle #2 \rangle}
\newcommandx{\flp}[2][1=\theta]{\widehat{\psi}^{#1}\langle #2 \rangle}
\newcommandx{\FLP}[2][1=\theta]{\widehat{\Psi}^{#1}\langle #2 \rangle}
\newcommand{\thv}{{\theta_\star}}
\newcommandx{\kap}[3][1=\theta]{
\ifthenelse{\equal{#3}{}}
{\kappa^{#1}\langle #2\rangle }
{\kappa^{#1}\langle #2\rangle (#3)}
}
\def\lnp{\ln^{+}}
\def\zsetp{\zset_{\geq 0}}
\def\zsetn{\zset_{\leq 0}}
\def\zsetpnz{\zset_{>0}}
\def\rsetp{\rset_{\geq0}}
\def\rsetpnz{\rset_{>0}}
\def\VX{V_{\Xset}}
\def\VY{V_{\Yset}}
\def\VU{V_{\Uset}}
\def\WX{W_{\Xset}}
\def\WY{W_{\Yset}}
\def\WU{W_{\Uset}}
\newcommandx{\probdoeblin}[3][1=]{\mu^{#1}_{#2}\langle #3 \rangle}
\newcommand{\Pblock}[2][]
{\ifthenelse{\equal{#1}{}}{\boldsymbol{\operatorname{L}}\langle#2\rangle}{\boldsymbol{\operatorname{L}}^{#1}\langle#2\rangle}
}
\newcommand{\ConPblock}[3][]
{\ifthenelse{\equal{#1}{}}{\boldsymbol{\operatorname{L}}\langle#2|#3\rangle}{\boldsymbol{\operatorname{L}}^{#1}\langle#2|#3\rangle}
}
\newcommand{\pblock}[2][]
{\ifthenelse{\equal{#1}{}}{\mathbf{\ell}\langle#2\rangle}{\mathbf{\ell}^{#1}\langle #2\rangle}
}
\newcommand{\Sset}{\mathsf{S}}
\newcommand{\Ssigma}{\mathcal{S}}
\newcommand{\Vset}{\mathsf{V}}
\newcommandx{\limlike}[4][1=\theta, 2=\theta_\star]{p^{#1,#2}\left( #3 | #4 \right)}
\def\Xmet{\boldsymbol{\delta}_\Xset}
\def\Zmet{\boldsymbol{\delta}_\Zset}
\def\Umet{\boldsymbol{\delta}_\Uset}
\newcommand{\Projarg}[2]{\Pi_{#1}\left(#2\right)}
\newcommand{\Proj}[1]{\Pi_{#1}}
\newcommandx{\vnorm}[2][1=V]{\left|#2\right|_{#1}}
\newcommand{\emptyseq}{\emptyset}
\newcommand{\underg}{\underline{g}}
\newcommand{\underG}{\underline{G}}
\newcommand{\underh}{\underline{\ell}}
\newcommand{\X}{{\mathrm{L}}}
\newcommand{\func}{{\mathbf{f}}}
\newcommand{\lr}[1]{\left(#1\right)}
\newcommand{\lrb}[1]{\left[#1\right]}
\newcommand{\tvdist}[2]{\mathrm{d}_{\mathrm{TV}}(#1,#2)}
\begin{document}

\title[General-order observation-driven models]{General-order observation-driven models: ergodicity and consistency of
  the maximum likelihood estimator}

\author{Tepmony Sim},
\author{Randal Douc}
\author{Fran\c{c}ois Roueff}

\address{MIT Research Unit \\
Institute of Technology of Cambodia \\
12156 Phnom Penh \\
 Cambodia
}
\email{tepmony.sim@itc.edu.kh}

 \address{D\'epartement  CITI \\
 CNRS UMR 5157 \\
 T\'el\'ecom SudParis \\
 Institut Polytechnique de Paris \\
 91000 \'Evry }
\email{randal.douc@telecom-sudparis.eu}

\address{LTCI,
 T\'el\'ecom Paris\\
 Institut Polytechnique de Paris \\
 19 place Marguerite Perey,\\
 91120 Palaiseau \\
 France }
\email{roueff@telecom-paristech.fr}

\date{\today}


\begin{abstract}
  \sloppy The class of observation-driven models (ODMs) includes
  many models of non-linear time series which, in a fashion similar to, yet
  different from, hidden Markov models (HMMs), involve hidden
  variables. Interestingly, in contrast to most HMMs,
  ODMs enjoy likelihoods that can be computed exactly with
  computational complexity of the same order as the number of observations,
  making maximum likelihood estimation the privileged approach for statistical
  inference for these models. A celebrated example of general order
  ODMs is the GARCH$(p,q)$ model, for which ergodicity and
  inference has been studied extensively.  However little is known on more
  general models, in particular integer-valued ones, such as the log-linear
  Poisson GARCH or the NBIN-GARCH of order $(p,q)$ about which most of the
  existing results seem restricted to the case $p=q=1$. Here we fill this gap
  and derive ergodicity conditions for general ODMs. The
  consistency and the asymptotic normality of the maximum likelihood estimator
  (MLE) can then be derived using the method already developed for
  first order ODMs.
\end{abstract}

\subjclass[MSC]{Primary: 60J05, 62F12; Secondary: 62M05,62M10.} 
\keywords{consistency, ergodicity, general-order, maximum likelihood, observation-driven models, time series of counts}

\maketitle

\sloppy

\section{Introduction}
\sloppy
Since they were introduced in \cite{cox:1981}, observation-driven models have been receiving
renewed interest in recent years. These models are widely applied in various
fields ranging from economics (see \cite{pindyck1998econometric}),
environmental study (see \cite{bhaskaran2013time}), epidemiology and public
health study (see \cite{zeger1988regression, davis1999modeling,
  ferland:latour:oraichi:2006}), finance (see \cite{liesenfeld2003univariate,
  rydberg2003dynamics,fokianos:tjostheim:2011,francq2011garch}) and population
dynamics (see \cite{ives2003estimating}). The celebrated
GARCH$(1,1)$ model introduced in \cite{bollerslev:1986} as well as
most of the models derived from this one are typical examples of
ODMs; see \cite{bollerslev08-glossary} for a list of some of
them. A list of contributions on this class of models specifically dealing with
discrete data includes \cite{streett:2000, davis:dunsmuir:streett:2003,
  heinen2003modelling, ferland:latour:oraichi:2006,
  fokianos:rahbek:tjostheim:2009, franke2010weak, fokianos:tjostheim:2011,
  henderson:matteson:woodard:2011, neuman:2011, davis:liu:2012,
  doukhan:fokianos:tjostheim:2012, dou:kou:mou:2013, fokianos2013goodness,
  christou2014quasi, christou2015count,cui-zhu-2018} and \cite{douc2015handy}.

ODMs have the nice feature that the computations of the associated
(conditional) likelihood and its derivatives are easy, the parameter estimation
is hence relatively simple, and the prediction, which is a prime objective in
many time series applications, is straightforward. However, it turns out that
the asymptotic properties of the maximum likelihood estimator (MLE) for this
class can be cumbersome to establish, except when they can be derived using
computations specific to the studied model (the GARCH$(1,1)$ case
being one of the most celebrated example). The literature concerning the
asymptotic theory of the MLE when the observed variable has Poisson
distribution includes \cite{fokianos:rahbek:tjostheim:2009,
  fokianos:tjostheim:2011, fokianos2012nonlinear} and \cite{wang2014self}. For
a more general case where the model belongs to the class of one-parameter
exponential ODMs, such as the Bernoulli, the exponential, the
negative binomial (with known shape parameter) and the Poisson
autoregressive models, the consistency and the asymptotic normality of the MLE
have been derived in \cite{davis:liu:2012}. However, the one-parameter
exponential family is inadequate to deal with models such as multi-parametric,
mixture or multivariate ODMs (the negative binomial with all
unknown parameters and mixture Poisson ODMs are examples of this
case). A more general consistency result, has been obtained recently in
\cite{dou:kou:mou:2013}, where the observed process may admit various
conditional distributions. This result has later been extended and refined in
\cite{douc2015handy}. However, most of the results obtained so far have been
derived only under the framework of GARCH$(1,1)$-type or first-order
ODMs. Yet, up to our knowledge, little is known for the
GARCH$(p,q)$-type, i.e. larger order discrete ODMs, as
highlighted as a remaining unsolved problem in
\cite{tjostheim2015count}.  

Here, following others (e.g. \cite{streett:2000,
  heinen2003modelling}), we consider a general class of ODMs that is
capable to account for several lagged variables of both hidden and
observation processes. We develop a theory for the class of
general-order ODMs parallel to the GARCH$(p,q)$ family and in particular we investigate the two  problems listed here below.
\begin{enumerate}[label=\alph*)]
\item\label{item:1} Provide a complete set of conditions for general
order ODMs implying that the process is ergodic. 
\item\label{item:2} Prove the consistency of the MLE. Under the
  assumption of well-specified models, this   can be
treated in two  separate sub-problems.
\begin{enumerate}[label=\arabic*)]
\item\label{item:3} Prove that the  the MLE is equivalence-class consistency. 
\item\label{item:4} Characterize the set of identifiable parameters.
\end{enumerate}
\end{enumerate}
In principle, the general order model can be treated by embedding it
into a first-order one and by applying the results obtained e.g. in
\cite{dou:kou:mou:2013,douc2015handy} to the embedded model. Yet the
particular form of the embedded model does not fit the usual
assumptions tailored for standard first-order ODMs.
This is why, as pointed out in \cite{tjostheim2015count},
solving Problem~\ref{item:1} is not only a formal extension of
available results. To obtain \autoref{thm:ergodicity:gen} where
Problem~\ref{item:1} is addressed, we derive conditions by taking advantage of the asymptotic behavior of
iterated versions of the kernels involved. Incidentally, this also
allows us to improve known conditions for some first-order models, as
explained in
\autoref{rem:log:poi:ergo:cond}-\ref{item:rem:log:poi:ergo:peq1}. 
Once the ergodicity of the model is proved, we can solve Sub-problem~\ref{item:2}~\ref{item:3}.
This is done in \autoref{thm:convergence-main:gen:od}, which is
obtained almost for free from the embedding in the ODM(1,1) case. 
Sub-problem~\ref{item:2}~\ref{item:4} is much more involved and has been addressed
in \cite{douc-roueff-sim_ident-genod2020}.

To demonstrate the generality, applicability and efficiency of
our approach, we apply our results to three specific integer valued
ODMs, namely, the log-linear Poisson GARCH$(p,q)$ model, the negative
binomial integer-valued GARCH or the NBIN-GARCH$(p,q)$ model and the
Poisson AutoRegressive model with eXogenous variables, known as the
PARX model. To the best of our knowledge, the stationarity and
ergodicity as well as the asymptotic properties of the MLE for the
general log-linear Poisson GARCH$(p,q)$ and NBIN-GARCH$(p,q)$ models
have not been derived so far. For the PARX$(p,q)$ model, which can be
considered as a vector-valued ODM in our study, such results are
available in \cite{AGOSTO2016640} but our approach leads to
significantly different assumptions, as will be shown in
\autoref{sec:parx-model}. Numerical experiments involving the
log-linear Poisson GARCH$(p,q)$ and the NBIN-GARCH $(p, q)$ models can be found
in \cite[Section~5.5]{sim-tel-01458087} for a set of earthquake data
from Earthquake Hazards Program \cite{earthquake-data}.

The {paper} is structured as follows. Definitions used throughout the paper are
introduced in \autoref{sec:definitions-notation-od-gen}, where we also state
our results on three specific examples. In \autoref{sec:main-result:gen:od}, we
present our main results on the ergodicity and consistency of
the MLE for general order ODMs. Finally,
\autoref{sec:append:proofs:gen:od} contains the postponed proofs and
we gather some independent useful lemmas in \autoref{sec:useful-lemmas}.

\section{Definitions and Examples}
\label{sec:definitions-notation-od-gen}

\subsection{Observation-driven model of order $(p,q)$}
Throughout the paper we use the notation
$\chunk{u}{\ell}{m}\eqdef(u_\ell,\ldots,u_m)$ for $\ell \leq m$, with the
convention that $\chunk{u}{\ell}{m}$ is the empty sequence if $\ell>m$, so
that, for instance $(\chunk x0{(-1)},y)=y$.  The observation-driven time series
model can formally be defined as follows.
\begin{definition}[General order ODM and (V)LODM]\label{def:obs-driv-gen}
  Let $(\Xset,\Xsigma)$, $(\Yset,\Ysigma)$ and $(\Uset,\Usigma)$ be measurable
  spaces, respectively called the \emph{latent space}, the \emph{observation
    space} and the \emph{reduced observation space}. Let $(\Theta,\met)$ be a
  compact metric space, called the \emph{parameter space}.  Let $\Upsilon$ be a
  measurable function from $(\Yset,\Ysigma)$ to $(\Uset,\Usigma)$.  Let
  $\set{(\chunk{x}{1}{p},\chunk{u}{1}{q}) \mapsto
    \tilde\psi^\theta_{\chunk{u}{1}{q}}(\chunk{x}{1}{p})}{\theta \in \Theta}$ be a
  family of measurable functions from
  $(\Xset^p\times \Uset^q, \Xsigma^{\otimes p} \otimes \Usigma^{\otimes q})$ to
  $(\Xset, \Xsigma)$, called the \emph{reduced link functions} and let
  $\set{G^\theta}{\theta \in \Theta}$ be a family of probability kernels on
  $\Xset\times\Ysigma$, called the \emph{observation kernels}.
  \begin{enumerate}[label=(\roman*),wide=0pt]
\item
  A time series
  $\nsequence{Y_k}{k \ge -q+1}$ valued in $\Yset$ is said to be distributed
  according to an \emph{observation-driven model of order $(p,q)$} (hereafter,
  ODM$(p, q)$) with reduced link function $\tilde\psi^\theta$,
  admissible mapping $\Upsilon$ and observation kernel $G^\theta$ if there
  exists a process $\nsequence{X_k}{k \ge -p+1}$ on $(\Xset, \Xsigma)$ such
  that for all $k\in\zsetp$,
\begin{equation}\label{eq:def:gen-ob}
\begin{split}
&Y_{k}\mid \mathcal{F}_{k}\sim G^\theta(X_{k};\cdot)\eqsp,\\
&X_{k+1}=\tilde\psi^\theta_{\chunk{U}{k-q+1}{k}}(\chunk{X}{(k-p+1)}{k})\eqsp,
\end{split}
\end{equation}
where 
$\mathcal{F}_k=\sigma\left(\chunk{X}{(-p+1)}{k},\chunk{Y}{(-q+1)}{(k-1)}\right)$
and $U_j=\Upsilon(Y_j)$ for all $j>-q$.
\item We further say that this
model is a \emph{vector linearly observation-driven model of order
  $(p,q,p',q')$} (shortened as VLODM$(p,q,p',q')$) if moreover for
some $p',q' \in \zsetpnz$, $\Xset$ is a closed subset of $\rset^{p'}$,
$\Uset \subset \rset^{q'}$ and, for all
$x=\chunk{x}{0}{(p-1)}\in\Xset^p$, $u=\chunk{u}{0}{(q-1)}\in\Uset^q$,
and $\theta\in\Theta$,
\begin{equation}
  \label{eq:tilde-psi:affine:vector:case}
\tilde\psi^\theta_{u}(x)=\boldsymbol{\omega}(\theta)+\sum_{i=1}^p A_i(\theta) \, x_{p-i}
+\sum_{i=1}^q B_i(\theta) \, u_{q-i}\;,
\end{equation}
for some mappings $\boldsymbol{\omega}$, $\chunk A1p$ and $\chunk B1q$
defined on $\Theta$  and
valued in $\rset^{p'}$, $\left(\rset^{p'\times p'}\right)^p$ and
  $\left(\rset^{p'\times q'}\right)^q$.
In the case where $p'=q'=1$, the VLODM$(p,q,p',q')$ is simply called a
\emph{linearly observation-driven model of order $(p,q)$} (shortened
as LODM$(p,q)$).
  \end{enumerate}
\end{definition}
\begin{remark}\label{rem:observ-driv-model}
  Let us comment briefly on this definition.
  \begin{enumerate}[label=(\arabic*)]
\item The standard definition of an observation driven model does not
  include the admissible mapping $\Upsilon$ and indeed, we can define the
  same model without $\Upsilon$ by replacing the second equation
  in~(\ref{eq:def:gen-ob}) by
  $$
  X_{k+1}=\psi^\theta_{\chunk{Y}{k-q+1}{k}}(\chunk{X}{(k-p+1)}{k})\eqsp,
  $$
  where
  $\set{(\chunk{x}{1}{p},\chunk{y}{1}{q}) \mapsto
    \psi^\theta_{\chunk{y}{1}{q}}(\chunk{x}{1}{p})}{\theta \in \Theta}$ is a
  family of measurable functions from $(\Xset^p\times \Yset^q, \Xsigma^{\otimes p} \otimes \Ysigma^{\otimes q})$ to
  $(\Xset, \Xsigma)$, called the (non-reduced) \emph{link functions}, and
  defined by
  \begin{equation}
    \label{eq:non-reduced:link:function}
    \psi^\theta_y(x)=\tilde\psi^\theta_{\Upsilon^{\otimes q}(y)}(x)\;,\qquad x\in\Xset^p\,,\;y\in\Yset^q\;.
  \end{equation}
  However by inserting the mapping $\Upsilon$, we introduce some
  flexibility and this is useful for describing various ODMs with the
  same reduced link function $\tilde\psi^\theta$. For instance all
  LODMs or VLODMs use the form of reduced link function
  in~(\ref{eq:tilde-psi:affine:vector:case}) although they may use
  various mappings $\Upsilon$'s. This is the case for the GARCH,
  log-linear GARCH and NBIN-GARCH, see below, but also of the
  bivariate integer-valued GARCH model \cite{cui-zhu-2018}.
    \item
  When $p=q=1$, then the ODM$(p,q)$ defined by \eqref{eq:def:gen-ob}
  collapses to the (first-order) ODM considered in
  \cite{dou:kou:mou:2013} and \cite{douc2015handy}.  Note also that if
  $p\neq q$, setting $r\eqdef\max(p,q)$, the ODM($p,q$) can be
  embedded in an ODM($r,r$), but this requires augmenting the
  parameter dimension which might impact the identifiability of the
  model.
\item\label{item:rem:observ-driv-model-nonlin} Note that in our
  definition, $\Upsilon$ does not depend on $\theta$. In some cases,
  one can simplify the link function $\tilde{\psi}$, by allowing one
  to let $\Upsilon$ depend on the unknown parameter. To derive
  identifiability conditions or to prove the consistency of the MLE,
  the dependence of the distribution with respect to the unknown
  parameter $\theta$ is crucial.  In contrast, proving that the
  process is ergodic is always done \emph{for a given} $\theta$ and it is thus
  possible to use our set of conditions to prove ergodicity with
  $\Upsilon$ depending on $\theta$, hence reasoning for a given
  $\theta$ and a given $\Upsilon$. Moreover our set of conditions
  (namely Conditions~\ref{item:CCMShyp},
  \ref{assum:bound:rho:gen},~\ref{assum:weak:feller:X},~\ref{assum:V:gen:X},~\ref{assum:reaching:mild}
  and~\ref{assum:alpha-phi:gen} in \autoref{thm:ergodicity:gen})
  depends on $\Upsilon$ only through the image space $\Uset$, which
  can usually be taken to be the same even in situations where
  $\Upsilon$ may depend on $\theta$.
\end{enumerate}
\end{remark}
The GARCH model has been extensively studied, see, for example,
\cite{bougerol:picard:1992, francq2004maximum, francq2009tour,
  lindner2009stationarity, francq2011garch} and the references therein. Many
other examples have been derived in the class of LODMs. An
important feature of the GARCH is the fact that $x\mapsto G^\theta(x;\cdot)$
maps a family of distributions parameterized by the scale parameter $\sqrt{x}$,
which is often expressed by replacing the first line in~(\ref{eq:def:gen-ob})
by an equation of the form $Y_k=\sqrt{X_k}\epsilon_k$ with the assumption that
$\sequence{\epsilon}$ is i.i.d. Such a simple multiplicative formula no longer
holds when the observations $Y_k$'s are integers, which seriously complicates
the theoretical analysis of such models, as explained in
\cite{tjostheim2015count}.
Our results will apply to the GARCH case
but we do not improve the existing results in this case. The real
interest of our approach lies in being able to address general order integer-valued
ODMs for which existing results are scarce.  

\begin{definition}[Space $(\Zset,\Zsigma)$, Notation $\PP^\theta_\Xinit,\PE^\theta_\Xinit$]\label{def:pomm}
  Consider an ODM as in \autoref{def:obs-driv-gen}.  Then, for all
  $k\geq0$, the conditional distribution of $(Y_k,X_{k+1})$ given
  $\mathcal{F}_k $ only depends on
\begin{equation}
 \label{eq:Zk:def}
Z_k=\left(\chunk X{(k-p+1)}k,\chunk
  U{(k-q+1)}{(k-1)}\right)\in\Zset\;,
\end{equation}
where we defined
\begin{equation}
  \label{eq:Zset:def}
\Zset  =\Xset^p\times\Uset^{q-1}\quad\text{endowed with the $\sigma$-field $\Zsigma=\Xsigma^{\otimes p}\otimes\Usigma^{\otimes {(q-1)}}$.}
\end{equation}
For all $\theta\in\Theta$ and all probability distributions $\Xinit$ on
$(\Zset,\Zsigma)$, we denote by $\PP_\Xinit^\theta$ the distribution of
$\nsequence{X_k,Y_{k'}}{k>-p,\ k'>-q}$ satisfying~(\ref{eq:def:gen-ob}) and
$Z_0\sim\Xinit$, with the usual notation $\PP_z^\theta$ in the case where
$\Xinit$ is a Dirac mass at $z\in\Zset$. We replace $\PP$ by $\PE$ to
denote the corresponding expectations.
\end{definition}
We always assume that the observation kernel is dominated by a $\sigma$-finite
measure $\nu$ on $(\Yset,\Ysigma)$, that is, for all
$\theta\in\Theta$, there exists a measurable function
$g^\theta:\Xset\times\Yset \to \rsetp$ written as
$(x,y)\mapsto g^\theta (x;y)$ such that for all $x\in\Xset$,
$g^\theta (x;\cdot)$ is the density of $G^\theta (x;\cdot)$ with
respect to $\nu$.
Then the inference about the model parameter is classically performed by relying on
the likelihood of the observations $(Y_0,\ldots,Y_n)$ given
$Z_0$. For all
$z\in\Zset$, the corresponding conditional
density function $p^{\theta}(\chunk y0n|z)$ with respect to $\nu^{\otimes (n+1)}$ under parameter
$\theta\in\Theta$ is given by
\begin{equation} \label{eq:lkd:Y:cl:condX1:gen}
\chunk{y}{0}{n} \mapsto \prod_{k=0}^{n} g^\theta\left(x_k;y_{k}\right) \eqsp,
\end{equation}
where, setting $u_k=\Upsilon(y_k)$ for $k=0,\dots,n$,
 the sequence $\chunk x0n$ is defined through the initial conditions and
recursion equations
\begin{align}
  \label{eq:pq-order-rec-equation-with-u}
\begin{cases}
x_k=\Projarg {p+k}z \;,&
    -p< k\leq 0\;,\\
u_k=\Projarg {p+q+k}z \;,&
-q< k\leq -1\;,\\
x_k=\tilde{\psi}_{\chunk{u}{(k-q)}{(k-1)}}^\theta\left(\chunk x{(k-p)}{(k-1)}\right)\;,&
    1\leq k\leq n\;,
  \end{cases}
\end{align}
where, throughout the paper, for all $j\in\{1,\ldots,p+q-1\}$, we denote
by $\Projarg{j}{z}$ the $j$-th entry of  $z\in\Zset$.
Note that $x_{k+1}$ only depends on $z$ and $\chunk y0k$ for
all $k\geq0$. Throughout the paper, we use the notation: for all $n\geq1$,
$\chunk{y}{0}{n-1}\in\Yset^{n}$ and $z\in\Zset$,
\begin{align}
  \label{eq:lkd:psi:n:def:rec}
&  \f{\chunk{y}0{(n-1)}}(z)\eqdef x_{n} \;,\text{ with $x_n$ defined by \eqref{eq:pq-order-rec-equation-with-u}.}
\end{align}
Note that for $n=0$, $\chunk{y}0{-1}$ is the empty sequence and
\eqref{eq:lkd:psi:n:def:rec} is replaced by
$\f{\emptyseq}(z)=x_0=\Projarg pz$.  Given the initial condition
$Z_0=\initmle$, the (conditional) maximum likelihood estimator $\mlY{\initmle,n}$ of the
parameter $\theta$ is thus defined by
\begin{equation}
\label{eq:defi:mle-od:gen}
\mlY{\initmle,n} \in \argmax_{\theta \in \Theta} \mathsf{L}_{\initmle,n}^\theta\eqsp,
\end{equation}
where, for all $\initmle\in\Zset$,
\begin{equation}
\label{eq:defi:lkdM:gen:od}
\mathsf{L}_{\initmle,n}^\theta\eqdef   n^{-1}\sum_{k=1}^{n} \ln g^\theta\left(\f{\chunk{Y}{0}{(k-1)}}(\initmle);Y_{k}\right) \eqsp.
\end{equation}
Note that since $\sequence{X}$ is unobserved, to compute the conditional
likelihood, we used some arbitrary initial values for $\chunk{X}{(-p+1)}{0}$,
(the first $p$ entries of $\initmle$). We also use
 arbitrary values for the last $q-1$ entries of
$\initmle$. Note that we index our set of observations as $\chunk{Y}{0}{n}$ (hence
assuming $1+n$ observations to compute $\mathsf{L}_{\initmle,n}^\theta$).  The iterated function
$\f{\chunk{Y}{0}{k}}$ can be cumbersome but is very easy to compute in practice
using the recursion~(\ref{eq:pq-order-rec-equation-with-u}). Moreover, the same kind
of recursion holds for its derivatives with respect to $\theta$, allowing one
to apply gradient steps to locally maximize the likelihood.

In this contribution, we investigate the convergence of $\mlY{\initmle,n}$ as
$n\to\infty$ for some (well-chosen) value of $\initmle$ under the assumption
that the model is well specified and the observations are in \emph{the} steady
state. The first problem to solve in this line of results is thus to show the
following.
\begin{hyp}{A}
\item\label{assum:gen:identif:unique:pi:gen} For all $\theta\in\Theta$, there exists a unique stationary solution
satisfying~(\ref{eq:def:gen-ob}).
\end{hyp}
This ergodic property is the cornerstone for making statistical inference
theory work and we provide simple general conditions in
\autoref{sec:ergodicity:gen:od}. We now introduce the notation that will allow
us to refer to the stationary distribution of the model throughout the {paper}.
\begin{definition}\label{def:equi:theta:gen:od}
  If~\ref{assum:gen:identif:unique:pi:gen} holds, then
\begin{enumerate}[label=\alph*)]
\item $\PP^\theta$ denotes the distribution on
  $((\Xset\times\Yset)^{\zset},(\Xsigma\times\Ysigma)^{\otimes\zset})$ of the stationary solution
  of~(\ref{eq:def:gen-ob}) extended on $k\in\zset$, with
  $\mathcal{F}_k=\sigma(\chunk X{-\infty}{k},\chunk Y{-\infty}{(k-1)})$;
\item $\tilde{\mathbb{P}}^\theta$ denotes the projection of  $\PP^\theta$  on the component $\Yset^{\zset}$.
\end{enumerate}
We also use the symbols $\mathbb{E}^\theta$ and
$\tilde{\mathbb{E}}^\theta$ to denote the expectations corresponding
to $\PP^\theta$ and $\tilde{\mathbb{P}}^\theta$, respectively.  We
further denote by $\piX^\theta$ and $\piY^\theta$ the marginal
distributions of $X_0$ and $Y_0$ under $\PP^\theta$, on
$(\Xset,\Xsigma)$ and $(\Yset,\Ysigma)$, respectively.  As a byproduct
of the proof of~\ref{assum:gen:identif:unique:pi:gen}, one usually
obtains a function $\VX:\Xset\to\rsetp$ of interest, common to all
$\theta\in\Theta$, such that the following property holds on the
stationary distribution (see \autoref{sec:ergodicity:gen:od}).
\begin{hyp}{A}
\item\label{ass:21-lyapunov:gen:od} For all $\theta\in\Theta$, $\piX^\theta(\VX)<\infty$.
\end{hyp}
It is here stated as an assumption for convenience. Note also that, in the
following, for $V:\Xset\to\rsetp$ and $f:\Xset\to\rset$, we denote
the $V$-norm of $f$ by
$$
\vnorm[V]{f}=\sup\set{\frac{|f(x)|}{V(x)}}{x\in\Xset}\;,
$$
with the convention $0/0=0$ and we write $f\lesssim V$ if $|f|_V<\infty$.  With
this notation, under~\ref{ass:21-lyapunov:gen:od}, for any $f:\Xset\to\rset$
such that $f\lesssim\VX$, $\piX^\theta(|f|)<\infty$ holds, and similarly, since
$\piY^\theta=\piX^\theta G^\theta$ as a consequence of~(\ref{eq:def:gen-ob}),
for any $f:\Yset\to\rset$ such that $G^\theta(|f|)\lesssim\VX$, we have
$\piY^\theta(|f|)<\infty$.
\subsection{Equivalence-class consistency and identifiability}
For all $\theta, \theta'\in\Theta$, we write $\theta\sim\theta'$ if
and only if $\tilde{\mathbb{P}}^{\theta}=\tilde{\mathbb{P}}^{\theta'}$. This defines an
equivalence relation on the parameter set $\Theta$ and the corresponding
equivalence class of $\theta$ is denoted by $[\theta]\eqdef\{\theta' \in\Theta:\; \theta\sim\theta'\}$.
\end{definition}
The equivalence relationship $\sim$ was introduced by \cite{leroux:1992} as an
alternative to the classical identifiability condition. Namely, we say that
$\mlY{\initmle,n}$ is \emph{equivalence-class consistent} at the true parameter
$\thv$ if
\begin{equation}\label{eq:equi:class:consistency:od}
\lim_{n\to\infty}\met(\mlY{\initmle,n},[\thv])=0,\quad \tilde\PP^{\thv}\as
\end{equation}
Recall that $\met$ is the metric endowing the parameter
space $\Theta$. Therefore,
in~(\ref{eq:equi:class:consistency:od}),
$\met(\mlY{\initmle,n},[\thv])$ is the distance between the MLE and
the set of parameters having the same stationary distribution as the
true parameter, and the convergence is considered under this common
stationary distribution.

Identifiability can then be treated as a separate problem which consists
in determining all the parameters $\thv$ for which $[\thv]$ reduces
to the singleton $\{\thv\}$, so that equivalence-class consistency
becomes the usual consistency at and only at these parameters. The
identifibility problem is treated in \cite[Proposition~11 and Theorem~12]{douc-roueff-sim_ident-genod2020}
for general order ODMs, where many cases of interest are specifically
detailed. We will use in particular
\cite[Theorem~17 and Section~5.5]{douc-roueff-sim_ident-genod2020}. 

\subsection{Three examples}\label{sec:examples:gen:od}

To motivate our general results we first explicit three models of
interest, namely, the log-linear Poisson GARCH$(p,q)$, the
NBIN-GARCH$(p,q)$ and the PARX$(p,q)$ models. To the best of our knowledge, the
stationarity and ergodicity as well as the asymptotic properties of the
MLE for the general log-linear Poisson GARCH$(p,q)$ and
NBIN-GARCH$(p,q)$ models have not been derived so far. Such results
are available for PARX model in \cite{AGOSTO2016640} but our approach
leads to significantly different assumptions, as will be shown in
\autoref{sec:parx-model}.

Once the ergodicity of the model is established, we can investigate
the consistency of the MLE. This is done by first investigating the
equivalence-class consitency of the MLE and then we only need to add
an identifiability condition to get the consistency of the MLE, see
Theorems~\ref{theo:ergo-convergence:logpois:gen},~\ref{theo:ergo-convergence:garch:gen:nbin}
and~\ref{theo:ergo-convergence:parx} hereafter. Such results pave the
way for investigating the asymptotic normality. This is done in
\cite[Proposition~5.4.7(iii) and Proposition~5.4.15]{sim-tel-01458087}
for the first two examples and in \cite[Theorems~2
and~3]{AGOSTO2016640} for the third one.  
\subsubsection{Log-linear Poisson GARCH model}
The Log-linear Poisson GARCH model is defined as follows in our setting.
    \begin{example}
    \label{expl:log-lin-poi}
    The Log-linear Poisson GARCH$(p,q)$ Model parameterized by
    $\theta=(\omega,\chunk{a}{1}{p},\chunk{b}{1}{q})\in\Theta\subset\rset^{1+p+q}$
    is a LODM$(p,q)$ with affine reduced link function of the
    form~(\ref{eq:tilde-psi:affine:vector:case})
    with  coefficients given by
    \begin{align}\label{eq:lodm-natural-param}
      \begin{split}
      \boldsymbol{\omega}(\theta)=\omega\;,\quad
      A_i(\theta) = a_i\quad\text{for}\quad 1\leq i\leq p\\
      \text{and}\quad
      B_i(\theta)  = b_i\quad\text{for}\quad 1\leq i\leq q\;,
      \end{split}
    \end{align}
    with observations space $\Yset=\zsetp$, hidden variables space
    $\Xset=\rset$,
    admissible mapping $\Upsilon(y)=\ln(1+y)$, and observation kernel
    $G^\theta (x;\cdot)$ defined as the Poisson distribution
    with mean $\rme^x$.
\end{example}

Our condition for showing the ergodicity of general order log-linear Poisson
GARCH models requires the following definition.
For all
$x=\chunk x{(1-(p\vee q))}0\in\rset^{p\vee q}$, $m\in\zsetp$, and
$w=\chunk w{(-q+1)}{m}\in\{0,1\}^{q+m}$, define ${\flp{w}}(x)$ as
$x_{m+1}$ obtained by the recursion
 \begin{equation}
   \label{eq:log:pois:ergo:cond:rec:def}
   x_k=\sum_{j=1}^pa_j\ x_{k-j}+\sum_{j=1}^q b_j\ w_{k-j}\ x_{k-j}\;,\qquad
   1\leq k\leq m+1
 \end{equation}
 We can now state our result on
 general order log-linear Poisson GARCH models.
 \begin{theorem}\label{theo:ergo-convergence:logpois:gen}
   Consider the log-linear Poisson GARCH$(p,q)$ model, which satisfies
   Eq.~(\ref{eq:def:gen-ob}) under the setting
   of \autoref{expl:log-lin-poi}. Suppose that, for all $\theta\in\Theta$, we have
   \begin{equation}
     \label{eq:log:poi:ergo:cond}
     \lim_{m\to\infty}
    \max\set{\left|{\flp{w}}(x)\right|}{w\in\{0,1\}^{q+m}} = 0\quad\text{for all $x\in\rset^{p\vee q}$}\;,
  \end{equation}
  and define
  \begin{align}
  \label{eq:ident-pooly}
\mathrm{P}_p(z;\chunk{a}1p)=  z^p-\sum_{k=1}^pa_{k}z^{p-k} \quad\text{and}\quad
  \mathrm{Q}_q(z;\chunk{b}1q)=
\sum_{k=0}^{q-1} b_{k+1}\ z^{q-1-k}\;.
\end{align}
Then, the following assertions hold.
 \begin{enumerate}[label=(\roman*),wide=0pt]
 \item\label{item:thm:logpois:ergodic:gen:od}
For all $\theta\in\Theta$, there exists a unique stationary solution
$\nsequence{(X_k,Y_k)}{k\in\zsetp}$ to~(\ref{eq:def:gen-ob}), that
is,~\ref{assum:gen:identif:unique:pi:gen}
holds. Moreover, for any $\tau>0$,~\ref{ass:21-lyapunov:gen:od} holds with
 $\VX: \rset\to\rsetp$ defined by
 \begin{equation}\label{eq:def:V:logpois:gen}
\VX(x)=\rme^{\tau |x|}\eqsp,\qquad  x\in\rset\eqsp.
\end{equation}
\item\label{item:thm:logpois:strong:consistency:gen:od} For any
  $\initmlex_1\in\rset$ and $\initmley_1\in\zsetp$, setting
  $\initmle=(\initmlex_1,\dots,\initmlex_1,\Upsilon(\initmley_1),\cdots,\Upsilon(\initmley_1))\in\rset^p\times\rset^{q-1}$,
  the MLE $\mlY{\initmle,n}$ as defined by~(\ref{eq:defi:mle-od:gen})
  is equivalence-class consistent, that
  is, (\ref{eq:equi:class:consistency:od}) holds for any 
  $\thv\in\Theta$.
\item\label{item:thm:logpois:ident} If the true parameter
  $\thv=(\omega^\star,\chunk{a^\star}{1}{p},\chunk{b^\star}{1}{q})$ moreover
  satisfies that the polynomials $\mathrm{P}_p(\cdot;\chunk{a^\star}1p)$ and
    $\mathrm{Q}_q(\cdot;\chunk{b^\star}1q)$ defined by~(\ref{eq:ident-pooly})
have no common complex roots, then the MLE $\mlY{\initmle,n}$ is
  consistent.
 \end{enumerate}
\end{theorem}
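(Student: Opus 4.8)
The three assertions will be obtained by specializing the general machinery of the paper to the log-linear Poisson setting of \autoref{expl:log-lin-poi}; the work is one of verification rather than of new constructions. For~\ref{item:thm:logpois:ergodic:gen:od} the plan is to check the six hypotheses \ref{item:CCMShyp}, \ref{assum:bound:rho:gen}, \ref{assum:weak:feller:X}, \ref{assum:V:gen:X}, \ref{assum:reaching:mild} and~\ref{assum:alpha-phi:gen} of \autoref{thm:ergodicity:gen}, which then delivers both~\ref{assum:gen:identif:unique:pi:gen} and~\ref{ass:21-lyapunov:gen:od}. The easy items come first: the reduced link is affine in $(x,u)$ with coefficients depending continuously (indeed as the identity) on $\theta=(\omega,\chunk a1p,\chunk b1q)$, and $\Upsilon(y)=\ln(1+y)$ gives $\Uset=\rsetp$; the observation kernel $G^\theta(x;\cdot)=\mathrm{Poisson}(\rme^x)$ is weakly continuous in $x$ since its mean $\rme^x$ is, which yields~\ref{assum:weak:feller:X}, while the full support of the Poisson law together with the surjectivity of the affine recursion onto $\rset$ gives the reaching and coupling conditions~\ref{assum:reaching:mild} and~\ref{item:CCMShyp}.

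The core of~\ref{item:thm:logpois:ergodic:gen:od} is to convert the deterministic hypothesis~\eqref{eq:log:poi:ergo:cond} into the contraction conditions~\ref{assum:bound:rho:gen} and~\ref{assum:alpha-phi:gen}. I would couple two copies of the chain fed with observations coupled so as to agree as often as possible; then the difference $d_k=x_k-x_k'$ of their hidden coordinates satisfies, using $|U_i-U_i'|=|\ln(1+Y_i)-\ln(1+Y_i')|\lesssim|d_i|$ and the fact that this difference vanishes whenever $Y_i=Y_i'$, a bound of the switched-linear form $|d_{k+1}|\le\sum_{j=1}^p|a_j|\,|d_{k+1-j}|+\sum_{j=1}^q|b_j|\,w_{k+1-j}\,|d_{k+1-j}|$ with $w_i=\1\{Y_i\neq Y_i'\}$. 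This is exactly the recursion~\eqref{eq:log:pois:ergo:cond:rec:def} defining $\flp{w}$, so the uniform decay~\eqref{eq:log:poi:ergo:cond} over all binary patterns $w$ furnishes the required asymptotic contraction of the iterated reduced link functions. The remaining analytic input is the drift~\ref{assum:V:gen:X} with $\VX(x)=\rme^{\tau|x|}$: for $Y\sim\mathrm{Poisson}(\rme^x)$ one has $\PE[\rme^{\tau|U|}]=\PE[(1+Y)^\tau]$, which is finite and grows at most polynomially in $\rme^x$; combining this moment bound with the contraction of the $a$-part produces a geometric drift toward a compact set, uniformly over the compact $\Theta$.

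Assertion~\ref{item:thm:logpois:strong:consistency:gen:od} then follows from \autoref{thm:convergence-main:gen:od}, which (as noted in the introduction) applies almost for free once ergodicity holds, the general-order model being embedded in the ODM$(1,1)$ framework. Beyond~\ref{item:thm:logpois:ergodic:gen:od} this only requires integrability of the stationary log-likelihood increments. Since the Poisson log-density is $\ln g^\theta(x;y)=yx-\rme^x-\ln(y!)$, the needed integrability reduces to the finiteness, uniformly in $\theta$, of $\mathbb{E}^\theta[|X_0|\,Y_0]$, $\mathbb{E}^\theta[\rme^{X_0}]$ and $\mathbb{E}^\theta[\ln(Y_0!)]$. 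All three are controlled by the exponential Lyapunov function of~\ref{item:thm:logpois:ergodic:gen:od}: choosing $\tau$ large, $\VX(x)=\rme^{\tau|x|}$ gives finite exponential moments of $X_0$ and hence, through $\piY^\theta=\piX^\theta G^\theta$ and $\mathbb{E}[Y_0\mid X_0]=\rme^{X_0}$, all the required moments of $Y_0$. With the explicit initialization $\initmle$ of the statement, the hypotheses of \autoref{thm:convergence-main:gen:od} are met and~\eqref{eq:equi:class:consistency:od} holds. For~\ref{item:thm:logpois:ident}, ordinary consistency is obtained from~\ref{item:thm:logpois:strong:consistency:gen:od} as soon as $[\thv]=\{\thv\}$; the coprimeness of $\mathrm{P}_p(\cdot;\chunk{a^\star}1p)$ and $\mathrm{Q}_q(\cdot;\chunk{b^\star}1q)$ is precisely the no-common-root hypothesis under which the identifiability results \cite[Theorem~17 and Section~5.5]{douc-roueff-sim_ident-genod2020} yield $[\thv]=\{\thv\}$, so that equivalence-class consistency upgrades to consistency.

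I expect the main obstacle to be~\ref{item:thm:logpois:ergodic:gen:od}, and within it the joint control needed to pass from~\eqref{eq:log:poi:ergo:cond} to~\ref{assum:bound:rho:gen} and~\ref{assum:alpha-phi:gen}: one must set up the coupling of the Poisson observations so that the difference of hidden trajectories is genuinely dominated by the switched-linear recursion $\flp{w}$, and then keep the coupling cost compatible with the exponential weight $\rme^{\tau|x|}$, so that the geometric contraction survives the unbounded fluctuations of $U=\ln(1+Y)$. By contrast, \ref{item:thm:logpois:strong:consistency:gen:od} is bookkeeping of moments and~\ref{item:thm:logpois:ident} is a direct citation of the companion identifiability paper.
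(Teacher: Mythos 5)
Your high-level plan for part~\ref{item:thm:logpois:ergodic:gen:od} (check the six hypotheses of \autoref{thm:ergodicity:gen}) and your parts~\ref{item:thm:logpois:strong:consistency:gen:od}--\ref{item:thm:logpois:ident} (apply \autoref{thm:convergence-main:gen:od}, then cite \cite[Theorem~17]{douc-roueff-sim_ident-genod2020}) coincide with the paper's, but your core mechanism for part~\ref{item:thm:logpois:ergodic:gen:od} misallocates where the hypothesis~(\ref{eq:log:poi:ergo:cond}) acts, and the step you build on it is wrong. In your two-chain coupling, the claimed domination $|d_{k+1}|\le\sum_j|a_j|\,|d_{k+1-j}|+\sum_j|b_j|\,w_{k+1-j}\,|d_{k+1-j}|$ with $w_i=\1\{Y_i\neq Y_i'\}$ fails: on the event $Y_i\neq Y_i'$ the increment $|U_i-U_i'|=|\ln(1+Y_i)-\ln(1+Y_i')|$ is an unbounded random variable with no pathwise bound of the form $O(|x_i-x_i'|)$, since the $Y$'s are Poisson draws rather than Lipschitz functions of the hidden states. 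In the coupling the paper actually uses, see~(\ref{eq:Rhatdef:XXY}), the two copies share the \emph{same} observation, so the $b$-terms cancel identically and the contraction condition~\ref{assum:bound:rho:gen} involves no observations at all: for the affine link it is equivalent to~\ref{item:lodm-ident-cond-cond-dens-def}, i.e.\ $\chunk a1p\in\mathcal{S}_p$, which is just the $w\equiv0$ instance of~(\ref{eq:log:poi:ergo:cond}) (\autoref{rem:log:poi:ergo:cond}\ref{item:rem:log:poi:ergo:1} combined with \autoref{rem:ergo:ass}\ref{item:ergo:ass:1}).

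The binary words are in fact consumed in the drift conditions~\ref{assum:V:gen:X} and~\ref{assum:alpha-phi:gen}\ref{assum:driftCond:W:new:gen:od:new}, and there your sketch has a genuine hole: you propose to get~\ref{assum:V:gen:X} from ``contraction of the $a$-part'' plus the moment bound $\PE[(1+Y)^\tau]\lesssim\rme^{\tau x_+}$, but the feedback of $U=\ln(1+Y)\approx x_+$ through the $b_j$'s is of the same exponential order as $\VX$ itself, so the $a$-part alone cannot yield a drift---take $p=q=1$, $a_1=1/2$, $b_1=10$: the $a$-part contracts, yet $X_{k+1}\approx\omega+(a_1+b_1)X_k$ explodes, and indeed~(\ref{eq:log:poi:ergo:cond}) fails there. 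The paper's device is \autoref{lem:support:bound:moment:pois}: for $V\sim\mathcal{P}(\rme^{\zeta})$, the exponential moment of an affine functional of $\ln(1+V)$ is bounded, up to a multiplicative constant, by the maximum of the integrand at the two extreme effective inputs $u=0$ and $u=\zeta_+$; iterating this one-step bound yields $\PE_z[\VX(X_n)]\le c^n\sum_{w\in\{0,1\}^n}\rme^{\tau|\Proj p\circ\FLP[]{w}(z)|}$, see~(\ref{eq:liapu:log:poi:intermediaire}), after which~(\ref{eq:log:poi:ergo:cond}) and linearity of $z\mapsto\FLP[]{w}(z)$ give the uniform decay~(\ref{eq:ergo:lo:poi:2}) and hence~(\ref{eq:ergoVX}); the same computation, with $\alpha(x,x')=\rme^{-|\rme^{x}-\rme^{x'}|}$ and $\underg(x,x';\cdot)=g(x\wedge x';\cdot)$ constructed via \autoref{lem:det:alpha:gen:od} and $\WX(x,x')=\rme^{|x|\vee|x'|}$, verifies~\ref{assum:alpha-phi:gen}\ref{assum:driftCond:W:new:gen:od:new}. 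Without this max-over-$\{0,1\}$ per-step bound, your hypothesis~(\ref{eq:log:poi:ergo:cond}) is never used where it is needed and the ergodicity argument does not close. Parts~\ref{item:thm:logpois:strong:consistency:gen:od} and~\ref{item:thm:logpois:ident} are right in direction, though \autoref{thm:convergence-main:gen:od} requires the Lipschitz structure of~\ref{assum:practical-cond:O:gen:od}---obtained from $|\ln g^\theta(x;y)-\ln g^\theta(x';y)|\le|x-x'|\,\rme^{x\vee x'}\,y$---rather than bare integrability of the log-likelihood, and the identifiability citation also needs the moment condition $\int\lnp(\ln(1+y))\,\piY^\theta(\rmd y)<\infty$, which your Lyapunov function does supply.
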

The proof is postponed to \autoref{sec:proof-autor-conv:logpoi}.
\begin{remark}
  \label{rem:log:poi:ergo:cond}
Let us provide some insights about Condition~(\ref{eq:log:poi:ergo:cond}).
\begin{enumerate}[label=(\arabic*)]
\item\label{item:rem:log:poi:ergo:1} Using the two possible constant sequences
  $w$, $w_k=0$ for all $k$ or $w_k=1$ for all $k$ in
  ~(\ref{eq:log:pois:ergo:cond:rec:def}), we easily see
  that~(\ref{eq:log:poi:ergo:cond}) implies
  $$
\chunk a1p\in\mathcal{S}_p\quad\text{and}\quad  \chunk a1{(p\vee q)}+\chunk b1{(p\vee q)}\in\mathcal{S}_{p\vee q}\;,
$$
where we used the usual convention $a_k=0$ for $p<k\leq q$ and $b_k=0$ for
$q<k\leq p$ and where
\begin{align}
  \label{eq:char-poly-roots-ok}
 \mathcal{S}_p=\set{\chunk{c}{1}{p}\in\rset^p}{\forall
        z\in\cset,
  |z|\leq1\text{ implies }1-\sum_{k=1}^pc_kz^k\neq0}  \;.
\end{align}
\item A sufficient condition to have~(\ref{eq:log:poi:ergo:cond}) is
  \begin{equation}
    \label{eq:log:poi:ergo:cond1}
    \sup\set{\left|{\flp{w}}(x)\right|}{w\in\{0,1\}^{q},\,x\in[-1,1]^{p\vee q}}<1\;.
  \end{equation}
  Indeed, defining $\rho$ as the left-hand side of the previous display, we
  clearly have, for all $m>p\vee q$, $w\in\{0,1\}^{q+m}$ and  $x\in\rset^{p\vee
    q}$,
  $$
\left|{\flp{w}}(x)\right|\leq\rho\,
\max\set{\left|{\flp{w'}}(x)\right|}{w'\in\{0,1\}^{q+m-j},1\leq j  \leq p\vee q}\;.
  $$
\item\label{item:rem:log:poi:ergo:2} The
  first iteration~(\ref{eq:log:pois:ergo:cond:rec:def}) implies, for all
  $w\in\{0,1\}^q$ and $x\in[-1,1]^{p\vee q}$,
  $$
  |\flp{w}(x)|\leq\left[\sum_{k=1}^{p\vee q}\left(|a_k|\vee|a_k+b_k|\right)\right]\;.
  $$
  Hence a sufficient condition to
  have~(\ref{eq:log:poi:ergo:cond1}) (and thus~(\ref{eq:log:poi:ergo:cond})) is
  \begin{equation}
    \label{eq:log:poi:ergo:cond2}
  \sum_{k=1}^{p\vee q}\left(|a_k|\vee|a_k+b_k|\right)<1 \;.
\end{equation}
\item\label{item:rem:log:poi:ergo:peq1} When $p=q=1$, by Points~\ref{item:rem:log:poi:ergo:1}
  and~\ref{item:rem:log:poi:ergo:2} above,
  Condition~(\ref{eq:log:poi:ergo:cond}) is equivalent to have $|a_1|<1$ and
  $|a_1+b_1|<1$.
  This condition is weaker than the one derived in \cite{dou:kou:mou:2013} where
$|b_1|<1$ is also imposed.
\end{enumerate}
\end{remark}

\subsubsection{NBIN-GARCH model}
Our next example is the NBIN-GARCH$(p,q)$, which is defined as follows
in our setting.
  \begin{example}
    \label{exmpl:nbin-garch:gen}
    The NBIN-GARCH$(p,q)$ model is a LODM$(p,q)$ parameterized by
    $\theta=(\omega,\chunk{a}{1}{p},\chunk{b}{1}{q},r)\in\Theta\subset\rsetpnz\times\rsetp^{p+q}\times
    \rsetpnz$ with affine reduced link function of the
    form~(\ref{eq:tilde-psi:affine:vector:case})
    with coefficients given by~(\ref{eq:lodm-natural-param}),
    observations space $\Yset=\zsetp$, hidden variables space
    $\Xset=\rsetp$,
    admissible mapping $\Upsilon(y)=y$, and observation kernels
    $G^\theta(x;\cdot)$ defined as the negative binomial
    distribution with shape parameter $r>0$ and mean $r\,x$, that is,
    for all $y\in\zsetp$,
    $$
G^\theta(x;\{y\})=\frac{\Gamma(r+y)}{y\,!\,\Gamma(r)}\left(\frac{1}{1+x}\right)^r\left(\frac{x}{1+x}\right)^y\eqsp.
    $$
\end{example}
We now state our result for general order  NBIN-GARCH model.
\begin{theorem}\label{theo:ergo-convergence:garch:gen:nbin}
  Consider the NBIN-GARCH$(p,q)$ model, which satisfies
   Eq.~(\ref{eq:def:gen-ob}) under the setting
   of \autoref{exmpl:nbin-garch:gen}. Suppose that, for all
  $\theta=(\omega,\chunk{a}{1}{p},\chunk{b}{1}{q},r)\in\Theta$, we have
  \begin{equation}\label{eq:con:ergo:gen:nbin:cns}
\sum_{k=1}^pa_k+r\ \sum_{k=1}^q b_k<1\eqsp.
  \end{equation}
  Then the following assertions hold.
  \begin{enumerate}[label=(\roman*),wide=0pt]
  \item\label{item:thm:nbin:ergo:gen:od} For all $\theta\in\Theta$, there
    exists a unique stationary solution to~(\ref{eq:def:gen-ob}), that
    is,~\ref{assum:gen:identif:unique:pi:gen} holds. Moreover,
    ~\ref{ass:21-lyapunov:gen:od} holds with $\VX(x)=x$ for all $x\in\rset_+$.
   \item\label{item:thm:nbin:strong:consistency:gen:od}
     For any $\initmlex_1\in\rsetpnz$ and $\initmley_1\in\zsetp$,
  setting
  $\initmle=(\initmlex_1,\dots,\initmlex_1,\initmley_1,\cdots,\initmley_1)\in\rsetp^p\times\zsetp^{q-1}$, the MLE $\mlY{\initmle,n}$ as
  defined by~(\ref{eq:defi:mle-od:gen}) is equivalence-class consistent, that
  is, (\ref{eq:equi:class:consistency:od}) holds for any 
  $\thv\in\Theta$.
\item\label{item:thm:nbin:ident} If the true parameter
  $\thv=(\omega^\star,\chunk{a^\star}{1}{p},\chunk{b^\star}{1}{q},r^\star)$ moreover
  satisfies that the polynomials $\mathrm{P}_p(\cdot;\chunk{a^\star}1p)$ and
    $\mathrm{Q}_q(\cdot;\chunk{b^\star}1q)$  defined by~(\ref{eq:ident-pooly})
have no common complex roots, then the MLE $\mlY{\initmle,n}$ is consistent.
\end{enumerate}
\end{theorem}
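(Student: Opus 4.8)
The plan is to obtain the three assertions by specialising the general theorems of \autoref{sec:main-result:gen:od} to the model of \autoref{exmpl:nbin-garch:gen}: assertion~\ref{item:thm:nbin:ergo:gen:od} from \autoref{thm:ergodicity:gen}, assertion~\ref{item:thm:nbin:strong:consistency:gen:od} from \autoref{thm:convergence-main:gen:od}, and assertion~\ref{item:thm:nbin:ident} by feeding the identifiability criterion of \cite{douc-roueff-sim_ident-genod2020} into the latter. I work throughout with the embedded first-order chain $\nsequence{Z_k}{k\ge0}$ on $\Zset=\rsetp^p\times\zsetp^{q-1}$, and I isolate the one structural fact that drives the whole analysis: because $G^\theta(x;\cdot)$ is the negative binomial law with mean $r\,x$, the freshly drawn observation satisfies $\PE^\theta[Y_k\mid\mathcal{F}_k]=r\,X_k$. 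This linearity of the conditional mean is what makes the NBIN case governed by the simple inequality~\eqref{eq:con:ergo:gen:nbin:cns}, in contrast with the multiplicative log-linear Poisson case of \autoref{theo:ergo-convergence:logpois:gen}.

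For assertion~\ref{item:thm:nbin:ergo:gen:od} I would check Conditions~\ref{item:CCMShyp}, \ref{assum:bound:rho:gen}, \ref{assum:weak:feller:X}, \ref{assum:V:gen:X}, \ref{assum:reaching:mild} and~\ref{assum:alpha-phi:gen}. The weak Feller property~\ref{assum:weak:feller:X} is immediate, since the reduced link~\eqref{eq:tilde-psi:affine:vector:case} is affine, hence continuous, and $x\mapsto G^\theta(x;\cdot)$ depends continuously on its mean $r\,x$. The crux is the drift~\ref{assum:V:gen:X} with $\VX(x)=x$. Taking conditional expectations in $X_{k+1}=\omega+\sum_{i=1}^pa_iX_{k-i+1}+\sum_{i=1}^qb_iY_{k-i+1}$ and substituting $\PE^\theta[Y_j\mid\cdot]=r\,X_j$ shows that the conditional-mean dynamics of $Z_k$ is affine with a \emph{nonnegative} linear part $M$; since $a_i,b_i\ge0$ and $r>0$, Perron--Frobenius together with an evaluation of the characteristic polynomial at $1$ gives that~\eqref{eq:con:ergo:gen:nbin:cns} is exactly $\rho(M)<1$. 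I would then build the required linear Lyapunov function on $\Zset$ from $\sum_{n\ge0}M^n$ (finite since $\rho(M)<1$), turning the contractivity of $M$ into the affine drift inequality for $\VX(x)=x$. Reachability~\ref{assum:reaching:mild} follows from $\omega>0$ and the fact that the negative binomial charges every point of $\zsetp$, so any neighbourhood of a reference state is reached with positive probability; Conditions~\ref{item:CCMShyp}, \ref{assum:bound:rho:gen} and~\ref{assum:alpha-phi:gen} are then checked from the same affine, linear-mean structure. This yields~\ref{assum:gen:identif:unique:pi:gen} and the moment bound $\piX^\theta(\VX)<\infty$, i.e.~\ref{ass:21-lyapunov:gen:od}.

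For assertion~\ref{item:thm:nbin:strong:consistency:gen:od}, with the proposed initialisation $\initmle$ (whose $X$-entries $\initmlex_1\in\rsetpnz$ keep the recursion in the region where $g^\theta>0$), I would invoke \autoref{thm:convergence-main:gen:od}, which reduces equivalence-class consistency to the continuity of $\theta\mapsto g^\theta(x;y)$ and to integrability of the log-likelihood increments under the stationary law $\PPY$. Writing $\ln g^\theta(x;y)=\ln\Gamma(r+y)-\ln\Gamma(y+1)-\ln\Gamma(r)-(r+y)\ln(1+x)+y\ln x$, continuity in $\theta$ over the compact $\Theta$ is clear, and the dominating bounds reduce to controlling $(r+y)\ln(1+x)$ and $y\ln x$; using $\ln(1+x)\lesssim\VX(x)=x$ and, after conditioning, $\PE^\theta[Y\mid X]=rX$, these are $\piX^\theta$-integrable by part~\ref{item:thm:nbin:ergo:gen:od}. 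This supplies the hypotheses of \autoref{thm:convergence-main:gen:od} and hence~\eqref{eq:equi:class:consistency:od} for every $\thv\in\Theta$. Assertion~\ref{item:thm:nbin:ident} then follows by noting that, by \cite{douc-roueff-sim_ident-genod2020}, the equivalence class $[\thv]$ collapses to $\{\thv\}$ precisely when $\mathrm{P}_p(\cdot;\chunk{a^\star}1p)$ and $\mathrm{Q}_q(\cdot;\chunk{b^\star}1q)$ from~\eqref{eq:ident-pooly} have no common complex root, which is the standing hypothesis; combined with part~\ref{item:thm:nbin:strong:consistency:gen:od} this promotes equivalence-class consistency to $\met(\mlY{\initmle,n},\thv)\to0$, $\PPY\as$.

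I expect the main obstacle to be the drift step~\ref{assum:V:gen:X}: one must turn the scalar spectral condition~\eqref{eq:con:ergo:gen:nbin:cns} into a genuine Lyapunov inequality for the multivariate chain $Z_k$, whose coordinates mix lagged $X$- and $U$-values, and one must reconcile this choice of Lyapunov function with the coupling requirement~\ref{item:CCMShyp}. A secondary delicate point is the integrability of the term $y\ln x$ in $\ln g^\theta$, for which a crude bound only yields control by $x\ln x$ rather than by $\VX(x)=x$; I anticipate that this is absorbed by the $V$-norm machinery underpinning \autoref{thm:convergence-main:gen:od}, but it is the place where the argument most needs care.
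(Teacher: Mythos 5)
Your overall route is the paper's: assertion~\ref{item:thm:nbin:ergo:gen:od} via \autoref{thm:ergodicity:gen} with $\VX(x)=x$, assertion~\ref{item:thm:nbin:strong:consistency:gen:od} via \autoref{thm:convergence-main:gen:od}, and assertion~\ref{item:thm:nbin:ident} via the polynomial criterion of \cite{douc-roueff-sim_ident-genod2020} (where you should still check the moment hypothesis $\int\lnp(y)\,\piY^\theta(\rmd y)<\infty$, immediate from part~\ref{item:thm:nbin:ergo:gen:od}). For the drift, your Perron--Frobenius argument on the companion matrix is a legitimate variant of what the paper does: the paper never forms a matrix, but takes expectations to get the scalar recursion $\PE_z[X_n]=\omega+\sum_{k=1}^{p\vee q}c_k\,\PE_z[X_{n-k}]$ with $c_k=a_k+r\,b_k$, notes via \autoref{lem:assum:parx:i} that~\eqref{eq:con:ergo:gen:nbin:cns} is equivalent to $1-\sum_k c_kz^k$ having no root in the closed unit disc, and applies \autoref{lem:affine-rec} to get $\PE_z[X_{n}]\leq C'\rho^{n}(1+V(z))$. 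One caveat for your version: \ref{assum:V:gen:X} does not ask for a bespoke one-step Lyapunov inequality built from $\sum_n M^n$; it asks for the iterated condition~\eqref{eq:ergoVX} with the \emph{specific} $V$ of~\eqref{eq:VVX:def}, so your resolvent construction must be routed back through that $V$ (harmless here, since $V(z)\geq(1+r)^{-1}|z|_\infty$).

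The genuine gap is \ref{assum:alpha-phi:gen}, which you dismiss as ``checked from the same affine, linear-mean structure.'' It is not: this condition has nothing to do with the affineness of the link. It requires exhibiting a minorizing kernel and a function $\alpha$ with $\min\{g(x;y),g(x';y)\}\ge\alpha(x,x')\,\underg(x,x';y)$ (\ref{item:alpha-phi:3:gen}), the bound $1-\alpha\leq\Xmet\times\WX$ (\ref{assum:hyp:1-alpha:W:gen}), and drift control of $\WX(X_n,X_n')$ under the coupled dynamics~\eqref{eq:Rhatdef:XXY}. This is a property of the negative binomial \emph{observation family}, specifically its stochastic monotonicity in $x$. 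The paper gets it from \autoref{lem:det:alpha:gen:od}: writing $g^\theta(x;y)=j(x)h(x;y)$ with $j(x)=(1+x)^{-r}$ and $h(x;y)=\frac{\Gamma(r+y)}{y!\,\Gamma(r)}\left(\frac{x}{1+x}\right)^y$ nondecreasing in $x$, one obtains
$\alpha(x,x')=\left(\frac{1+x\wedge x'}{1+x\vee x'}\right)^r$ and $\phi(x,x')=x\wedge x'$, whence $1-\alpha(x,x')\leq(1\vee r)\,|x-x'|$, so $\WX\equiv1\vee r$ and $\WU\equiv1$ are \emph{constant} and the $W$-drift conditions become trivial. Without some such construction the hypothesis of \autoref{thm:ergodicity:gen} is simply unverified, and your proof of part~\ref{item:thm:nbin:ergo:gen:od} is incomplete.

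Finally, your flagged worry about the integrability of $y\ln x$ is aimed at the wrong quantity: \ref{assum:practical-cond:O:gen:od} never asks to integrate $\ln g^\theta$ itself, only the difference bound~\eqref{eq:ineq:loggg:gen:od} on a set $\Xset_1$. The paper uses compactness of $\Theta$ to get $\omega\geq\underline{\omega}>0$ and $r\le\overline r$, takes $\Xset_1=[\underline{\omega},\infty)$ (so \ref{assum:exist:practical-cond:subX:gen:od} holds, every iterate $\f{y}(z)$ being at least $\underline{\omega}$), and on $\Xset_1$ gets
$\left|\ln g^\theta(x;y)-\ln g^\theta(x';y)\right|\leq\left[\overline r+2y\,\underline{\omega}^{-1}\right]|x-x'|$,
that is $C=0$, $\mathrm{h}(s)=s$, $\bar\phi(y)=A+By$; because $C=0$, \ref{item:22or23-barphi-V:gen:od} only requires $G^\theta\lnp\bar\phi\lesssim\VX$, which holds since $G^\theta\lnp\bar\phi(x)\leq A+B\,r\,x$. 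So the ``absorption'' you anticipate is real, but it operates through the choices $C=0$ and $\Xset_1$ bounded away from $0$ --- a step your parenthetical about $\initmlex_1\in\rsetpnz$ gestures at without carrying out, and the one place in part~\ref{item:thm:nbin:strong:consistency:gen:od} you would need to make explicit.
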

The proof is postponed to \autoref{sec:proof-autor-conv:nbin}.
\begin{remark}
  \label{rem:nbin-not-optimal}
  Clearly, under the setting
   of \autoref{exmpl:nbin-garch:gen}, if Eq.~(\ref{eq:def:gen-ob}) has a
   stationary solution such that $\mu=\int x\, \piX(\rmd x)<\infty$, taking the
   expectation on both sides of the second equation in~(\ref{eq:def:gen-ob})
   and using that $\int y\, \piY(\rmd y)=r\int x\, \piX(\rmd x)$,
   then~(\ref{eq:con:ergo:gen:nbin:cns}) must hold, in which case we get 
   $$
   \mu=\left(1-\sum_{k=1}^pa_k-r\ \sum_{k=1}^q b_k\right)^{-1}\;.
   $$
   Hence~(\ref{eq:con:ergo:gen:nbin:cns}) is in fact necessary and sufficient
   to get a stationary solution admitting a finite first moment, as was already
   observed in \cite[Theorem~1]{zhu:2011} although the ergodicity is not proven
   in this reference.  However, we believe that, similarly to the classical
   GARCH$(p,q)$ processes, we can find stationary solutions to
   Eq.~(\ref{eq:def:gen-ob}) in the case where~(\ref{eq:con:ergo:gen:nbin:cns})
   does not hold.  This is left for future work.
\end{remark}

\subsubsection{The PARX model}\label{sec:parx-model}
The PARX model is similar to the standard INGARCH($p,q$) model but
with additional exogenous variables in the linear link function for
generating the hidden variables. Following \cite{AGOSTO2016640}, these
exogenous variables are assumed to satisfy some Markov dynamic of
order 1 independently of the observations and of the hidden variables
(see their Assumption~1). This leads to the following definition.
\begin{definition}[PARX model]
\label{def:parx-model}  Let $d,p,q$ and $r$ be positive integers. For
  $\gamma= \chunk{\gamma}{1}{d}\in \rsetp^d$, we set
  $\func_\gamma: \xi= (\xi_1, \ldots,\xi_d) \mapsto \sum_{j=1}^d
  \gamma_j f_j(\xi_j)$ where $f_i:\rset^r \to \rsetp$ are given
  functions for all $i\in\{1,\ldots,d\}$. Let ${\mathcal P}(x)$ denote
  the Poisson distribution with parameter $x$ and $L$ be a given
  Markov kernel on $\rset^r\times {\mathcal B} (\rset^r)$. We define
  the {Poisson AutoRegressive model with eXogenous variables},
  shortly written as PARX$(p,q)$, as a class of processes
  parameterized by 
  $\theta=(\omega,\chunk{a}{1}{p},\chunk{b}{1}{q},\chunk{\gamma}{1}{d})\in\Theta$,
  where $\Theta$ is a compact subset of $\rsetpnz\times\rsetp^{p+q+d}$,
  and satisfying
\begin{align}
&X_{t}=\omega+\sum_{i=1}^{p}a_i X_{t-i}+\sum_{i=1}^q b_i
                Y_{t-i}+\func_\gamma (\Xi_{t-1}) \nonumber \\
&(Y_t,\Xi_{t})|{\mcf_t}\sim {\mathcal P} (X_t)\otimes
                                                                L(\Xi_{t-1};\cdot) \;, \label{eq:def:parx}
\end{align}
where $\mathcal{F}_t=\sigma\left(\chunk{X}{(-p+1)}{t},\chunk{Y}{(-q+1)}{(t-1)},\chunk{\Xi}{(-q+1)}{(t-1)}\right)$. 
\end{definition}
 \begin{remark}
   Note that our $a_1,\dots,a_p$ and $b_1,\dots,b_q$ correspond to
   $\beta_1,\dots,\beta_q$ and $\alpha_1,\dots,\alpha_p$ of
   \cite{AGOSTO2016640}.  Here we allowed $r$ to be different from $d$
   whereas in in \cite{AGOSTO2016640} it is assumed that $d=r$ and
   they are denoted by $d_x$.
 \end{remark}
 Since the exogenous variables are observed
 we can recast the PARX model into a VLODM$(p,q)$ by including the
 exogenous variables into the observations $Y_t$ and the hidden
 variable $X_t$. Namely, we can formally see the above PARX model as a
 special case of VLODM as follows.
 \begin{example}[PARX model as a VLODM]
   \label{exmpl:parx}
   Consider a PARX model as in \autoref{def:parx-model}.
Set $\bar Y_t=(Y_t,\Xi_t)$, $\bar X_t=(X_t,\Xi_{t-1})$, which are
valued in
 $\bar \Yset=\zsetp\times\rset^r$ and  $\bar
 \Xset=\rsetp\times\rset^r$. Then $\nsequence{\bar Y_k}{k \ge -q+1}$ is a
 VLODM($p,q,1+r,1+d+r$) by setting
 \begin{enumerate}[label=\alph*)]
 \item the admissible mapping as
 $\Upsilon(\bar y):=(y,f_1(\xi),\dots,f_{d}(\xi),\xi)\in \bar \Uset \eqdef \zsetp \times \rsetp^{d}\times\rset^r$ for
 all $\bar y=(y,\xi)\in\zsetp\times\rset^r$;
\item\label{item:exmpl:parx:G} for all $\theta=(\omega,\chunk{a}{1}{p},\chunk{b}{1}{q},\chunk{\gamma}{1}{d})$, 
 $\boldsymbol{\omega}(\theta):=\begin{bmatrix}\omega\\0_{r,1}
 \end{bmatrix}$,
$A_k(\theta):=
 \begin{bmatrix}
   a_k&0_{1,r}\\
   0_{r,1}&0_{r,r}
 \end{bmatrix}
$ for
$k=1,\dots,p$,
$B_1(\theta):=\begin{bmatrix}
   b_1&\begin{bmatrix}\gamma_1&\dots&\gamma_d
   \end{bmatrix}&0_{1,r}\\
   0_{r,1}&0_{r,d}&\mathrm{I}_r
 \end{bmatrix}$
and $B_k(\theta):= \begin{bmatrix}
   b_k&0_{1,d+r}\\
   0_{r,1}&0_{r,d+r}
 \end{bmatrix}
 $ for $k=1,\dots,q$;
\item  for all $\bar x=(x,\xi)\in\rsetp\times\rset^r$, $G^\theta(\bar x;\cdot):={\mathcal P} (x)\otimes L(\xi;\cdot)$. 
 \end{enumerate}
\end{example}
We end up this section by stating our result for general PARX model.
For establishing the ergodicity of the PARX model, we need some
assumptions on the dynamic of the exogeneous variables through the
Markov kernel $L$. Namely we consider the following assumptions on the
kernel $L$.
\begin{hyp}{L}
\item\label{hyp:parx-model-L-ell} The Markov kernel $L$ admits a kernel
density $\ell$ with respect to some measure $\nu_\X$ on Borel sets of
$\rset^r$ such that
\begin{enumerate}[(i)]
\item \label{eq:assum:parx:i} for all $(\xi,\xi') \in\rset^r \times
  \rset^r$, $\ell (\xi;\xi')>0$;
\item \label{eq:assum:parx:ii} for all $\xi \in\rset^r$, there exists
  $\delta>0$ such that
  $$
  \int_{\rset^r} \sup_{\|\xi'-\xi\| \leq \delta} \ell (\xi';\xi'')\;\nu_\X(\rmd \xi'')<\infty \eqsp,
  $$
  where $\|\cdot\|$ denotes the Euclidean norm on $\rset^r$.  
\end{enumerate}
\item\label{hyp:parx-model-wf} The Markov kernel $L$ is weak-Feller
  ($Lf$ is bounded and continuous for any $f$ that is bounded and continuous).
\item \label{item:assum:parx:v}There exist a probability measure $\pi_\X$ on Borel sets of $\rset^r$, a measurable function $V_\X:\rset^r \to [1,\infty)$ and constants $(\varrho,C) \in (0,1) \times \rsetp$ such that
  \begin{enumerate}[(i)]
  \item $L$ is $\pi_\X$-invariant,
  \item \label{item:assum:parx:v:b} $\pi_\X(V_\X)<\infty$ and $\{V_\X\leq M\}$ is  a compact set for any $M>0$,
  \item\label{item:assum:parx:v:c} for all $i\in\{1,\ldots,d\}$, we
    have $f_i\lesssim V_\X$. 
  \item \label{item:assum:parx:v:d} for all Borel functions
    $h:\rset^r\to\rset$ such that $h\lesssim V_\X$, we have for all $(\xi,n) \in \zsetp$,
    $$
    |L^n h(\xi)-\pi_\X(h)|\leq C \, \varrho^n\, \vnorm[V_\X]{h}\;V_\X(\xi) \eqsp.
    $$
  \end{enumerate}
\item \label{item:assum:parx:vi} There exists a constant $M\geq 1$ such that for all $(\xi,\xi')\in \rset^{2r}$,
  $$
  \tvdist{L(\xi,\cdot)}{L(\xi',\cdot)} \leq M \|\xi-\xi'\|
  $$
  where $\tvdist{\nu}{\nu'}$ is the total variation distance
  between two probability measures $\nu,\nu'$ on $\rset^r$.
\item\label{assum:thm:parx:equiv:conv:mle:gen:od:L} There exist
  $C_\X\geq0$, $\mathrm{h}_\X:\rset_+\to\rset_+$ and a
  measurable function $\bar\phi_\X:\rset^r\to\rsetp$ such that the following
  assertions hold for $L$, its kernel density $\ell$ introduced
  in~\ref{hyp:parx-model-L-ell} and its drift function introduced in~\ref{item:assum:parx:v}.
\begin{enumerate}[label=(\roman*)]
\item\label{assum:thm:parx:equiv:conv:mle:gen:od-1} For all $\xi'\in\rset^r$, the mapping $\xi\mapsto\ell(\xi,\xi')$
  is continuous. 
\item\label{assum:thm:parx:equiv:conv:mle:gen:od-2} We have $\sup_{\xi,\xi'\in\rset^r} \ell(\xi,\xi')<\infty$.
\item\label{assum:thm:parx:barphi:bar:phi:gen:od} For all
  $\xi\in\rset^r$, we have $1+\|\xi\|\leq\bar\phi_{\X}(\xi)$.
\item\label{assum:thm:parx:ineq:loggg:gen:od} For all $(\xi,\xi',\xi'')\in\rset^{3r}$,
\begin{equation*}
\left|\ln\frac{\ell(\xi;\xi'')}{\ell(\xi';\xi'')}\right|\leq \mathrm{h}_\X(\|\xi-\xi'\|) \,
\rme^{C_\X \, \left(1+\|\xi\|\vee\|\xi'\right)}\;\bar\phi_\X(\xi'')\;.
\end{equation*}
\item \label{assum:thm:parx:H:gen:od} $\mathrm{h}_\X(u)=O(u)$ as $u\to0$.
\item \label{assum:thm:parx:22or23-barphi-V:gen:od} If $C_\X=0$, then
 $\lnp \bar{\phi}_\X \lesssim V_\X$. Otherwise,
 $\bar{\phi}_\X\lesssim V_\X$.
\end{enumerate}
\item\label{item:PARX--non-degenerateident-G-X} We have
    $L(v;\{\chunk f1d(\cdot)\in A\})<1$ for all  $v\in\Vset$ and affine hyperplanes
  $A\subset\rset^{d}$.
\end{hyp}
We can now state our main result for the PARX model.
 \begin{theorem}\label{theo:ergo-convergence:parx}
   Consider the PARX$(p,q)$ model of \autoref{def:parx-model}, seen as
   the VLODM of \autoref{exmpl:parx}. Suppose
   that~\ref{hyp:parx-model-L-ell}--\ref{item:assum:parx:vi} hold,
   and that, for all $\theta=(\omega,\chunk{a}{1}{p},\chunk{b}{1}{q},\chunk{\gamma}{1}{d})\in\Theta$, we have
   \begin{equation}
     \label{eq:cond:ergo-convergence:parx:one}
\sum_{i=1}^{p}a_i+\sum_{i=1}^q b_i < 1\;.     
   \end{equation}
   Then the following assertions hold.
   \begin{enumerate}[(i),wide=0pt]
   \item \label{item:thm:parx:ergo:gen:od}
   for all $\theta\in\Theta$, there exists a unique stationary solution
$\nsequence{(X_k,Y_k,\Xi_k)}{k\in\zsetp}$ to~(\ref{eq:def:gen-ob}), that
is,~\ref{assum:gen:identif:unique:pi:gen} holds for the VLODM of  \autoref{exmpl:parx}. Moreover,~\ref{ass:21-lyapunov:gen:od} holds with
 $\VX(\bar x)=x+V_\X(\xi)$ for all $\bar x=(x,\xi)\in\rsetp\times\rset^r$.
\item \label{item:thm:parx:equiv:conv:mle:gen:od} Suppose moreover
  that~\ref{assum:thm:parx:equiv:conv:mle:gen:od:L} holds.  Then, for
  any $\initmlex_1\in\rsetp$, $\initmlexi_1\in\rset^r$ and
  $\initmley_1\in\zsetp$, setting
  $\initmle=((\initmlex_1,\initmlexi_1),\dots,(\initmlex_1,\initmlexi_1),(\initmley_1,\initmlexi_1),\cdots,(\initmley_1,\initmlexi_1))\in(\rsetp\times\rset^r)^p\times(\zsetp\times\rset^r)^{q-1}$,
  the MLE $\mlY{\initmle,n}$ as defined by~(\ref{eq:defi:mle-od:gen})
  is equivalence-class consistent, that is,
  (\ref{eq:equi:class:consistency:od}) holds for any $\thv\in\Theta$.
\item \label{item:thm:parx:conv:gen:od}
Suppose in addition
that~\ref{item:PARX--non-degenerateident-G-X} holds. Then the
MLE $\mlY{\initmle,n}$ is consistent.
\end{enumerate}
\end{theorem}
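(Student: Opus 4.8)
The plan is to derive each of the three assertions by reducing it to a general result already available for ODMs, working throughout with the VLODM$(p,q,1+r,1+d+r)$ reformulation of \autoref{exmpl:parx}. Assertion~\ref{item:thm:parx:ergo:gen:od} will follow from \autoref{thm:ergodicity:gen} once its conditions \ref{item:CCMShyp}, \ref{assum:bound:rho:gen}, \ref{assum:weak:feller:X}, \ref{assum:V:gen:X}, \ref{assum:reaching:mild} and \ref{assum:alpha-phi:gen} are verified; assertion~\ref{item:thm:parx:equiv:conv:mle:gen:od} will follow from \autoref{thm:convergence-main:gen:od} after checking its additional regularity hypotheses, for which \ref{assum:thm:parx:equiv:conv:mle:gen:od:L} has been tailored; and assertion~\ref{item:thm:parx:conv:gen:od} will follow by combining \ref{item:thm:parx:equiv:conv:mle:gen:od} with the identifiability results of \cite[Theorem~17 and Section~5.5]{douc-roueff-sim_ident-genod2020}.

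For \ref{item:thm:parx:ergo:gen:od}, the essential observation is that the augmented hidden variable $\bar X_t=(X_t,\Xi_{t-1})$ splits into a GARCH-type component $X_t$, driven by the affine reduced link function, and an \emph{autonomous} exogenous component $\Xi_{t-1}$ whose law is governed solely by the Markov kernel $L$. First I would take the combined Lyapunov function $\VX(\bar x)=x+V_\X(\xi)$ and verify the drift condition \ref{assum:V:gen:X}: the contribution of $X_t$ is controlled by \eqref{eq:cond:ergo-convergence:parx:one}, which forces the spectral radius of the companion matrix of the first-moment recursion to be below one, the exogenous feed-in $\func_\gamma(\Xi_{t-1})$ entering linearly through $B_1(\theta)$ and being absorbed using the domination \ref{item:assum:parx:v}\ref{item:assum:parx:v:c} of the $f_i$ by $V_\X$; the contribution of $\Xi_{t-1}$ is then controlled by the geometric drift \ref{item:assum:parx:v}\ref{item:assum:parx:v:d}. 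The weak-Feller condition \ref{assum:weak:feller:X} follows from continuity of the affine link together with the weak-Feller assumption \ref{hyp:parx-model-wf} on $L$; the reaching condition \ref{assum:reaching:mild} from the strict positivity \ref{hyp:parx-model-L-ell}\ref{eq:assum:parx:i} of $\ell$ combined with the full support of the Poisson kernel on $\zsetp$; and the coupling conditions \ref{item:CCMShyp}, \ref{assum:bound:rho:gen} and \ref{assum:alpha-phi:gen} from the total-variation Lipschitz bound \ref{item:assum:parx:vi} on $L$ together with the contraction of the GARCH block inherited from \eqref{eq:cond:ergo-convergence:parx:one}.

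For \ref{item:thm:parx:equiv:conv:mle:gen:od}, I would verify the hypotheses of \autoref{thm:convergence-main:gen:od} by exploiting the product form of the observation kernel, which yields $\ln g^\theta(\bar x;\bar y)=\ln(\text{Poisson density of }y\text{ with mean }x)+\ln\ell(\xi;\xi')$. The Poisson factor is handled exactly as in the previous two examples, while the exogenous factor $\ln\ell$ is controlled by \ref{assum:thm:parx:equiv:conv:mle:gen:od:L}: continuity \ref{assum:thm:parx:equiv:conv:mle:gen:od-1} and boundedness \ref{assum:thm:parx:equiv:conv:mle:gen:od-2} give the required measurability and integrability, the log-ratio bound \ref{assum:thm:parx:ineq:loggg:gen:od} together with the behaviour \ref{assum:thm:parx:H:gen:od} of $\mathrm{h}_\X$ near zero supplies the local Lipschitz control of the conditional likelihood in $\theta$, and \ref{assum:thm:parx:22or23-barphi-V:gen:od} ties the dominating function $\bar\phi_\X$ to $V_\X$ so that the relevant moments are finite under $\piX^\theta$. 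Finally, \ref{item:thm:parx:conv:gen:od} follows by checking that the non-degeneracy condition \ref{item:PARX--non-degenerateident-G-X}, which guarantees that $\chunk f1d(\Xi)$ charges no affine hyperplane, together with the assumed coprimality of $\mathrm{P}_p$ and $\mathrm{Q}_q$, meets the identifiability criterion of \cite[Theorem~17]{douc-roueff-sim_ident-genod2020}, so that $[\thv]=\{\thv\}$ and equivalence-class consistency upgrades to consistency.

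The hard part will be the verification of the coupling and reaching conditions (\ref{item:CCMShyp} and \ref{assum:reaching:mild}) for the \emph{joint} chain: the augmented hidden state $(X_t,\Xi_{t-1})$ does not fit the standard first-order ODM template, since one component is observation-driven and contracting while the other is an autonomous, merely geometrically ergodic Markov chain, the two being coupled through the term $\func_\gamma(\Xi_{t-1})$ in the link function. Disentangling these two mechanisms — propagating the contraction of the GARCH block while exploiting only the total-variation Lipschitz regularity \ref{item:assum:parx:vi} and the geometric drift of $L$ — is where most of the effort lies; by contrast, once the tailored bounds of \ref{assum:thm:parx:equiv:conv:mle:gen:od:L} are in place, the likelihood-regularity step for \ref{item:thm:parx:equiv:conv:mle:gen:od} is comparatively routine.
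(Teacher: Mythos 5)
Your plan for assertions~\ref{item:thm:parx:ergo:gen:od} and~\ref{item:thm:parx:equiv:conv:mle:gen:od} matches the paper's route (embed as the VLODM of \autoref{exmpl:parx}, check the hypotheses of \autoref{thm:ergodicity:gen} and \autoref{thm:convergence-main:gen:od}), but your treatment of assertion~\ref{item:thm:parx:conv:gen:od} contains a genuine gap: you invoke ``the assumed coprimality of $\mathrm{P}_p$ and $\mathrm{Q}_q$'', yet no such hypothesis appears in the theorem --- unlike Theorems~\ref{theo:ergo-convergence:logpois:gen}\ref{item:thm:logpois:ident} and~\ref{theo:ergo-convergence:garch:gen:nbin}\ref{item:thm:nbin:ident}, the PARX statement adds only~\ref{item:PARX--non-degenerateident-G-X}. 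Correspondingly, \cite[Theorem~17]{douc-roueff-sim_ident-genod2020} is the wrong tool here: it is tailored to scalar LODMs and its condition (SL-4) is precisely the polynomial coprimality you smuggle in. The paper instead uses the PARX-specific identifiability result of \cite[Section~5.5]{douc-roueff-sim_ident-genod2020} (Theorem~18 of the supplement), in which the non-degeneracy~\ref{item:PARX--non-degenerateident-G-X} of the covariates replaces any coprimality requirement; its remaining hypotheses are~\ref{assum:gen:identif:unique:pi:gen}, the invertibility condition~\ref{item:lodm-ident-cond-cond-dens-def} (which follows from~(\ref{eq:cond:ergo-convergence:parx:one}) via \autoref{lem:assum:parx:i}), and the log-moment bound $\PE^\theta[\lnp(Y_0+\sum_{k=1}^d f_k(\Xi_0)+\|\Xi_0\|)]<\infty$, which must itself be verified --- the paper derives it from~\ref{ass:21-lyapunov:gen:od} with $\VX(x,\xi)=x+V_\X(\xi)$, together with~\ref{item:assum:parx:v}\ref{item:assum:parx:v:c} and $\ln(1+\|\cdot\|)\lesssim\lnp\bar\phi_\X\lesssim V_\X$ from~\ref{assum:thm:parx:equiv:conv:mle:gen:od:L}. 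This verification step is absent from your plan; as written, your argument proves a strictly weaker statement than the theorem.

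A secondary point: your assessment of where the difficulty lies is inverted. Conditions~\ref{item:CCMShyp} and~\ref{assum:reaching:mild} are essentially automatic for a VLODM --- the former always holds, and the latter follows from~\ref{assum:reaching:mild}\ref{assum:reaching:cont} (the link is affine, hence continuous) together with the positivity~(\ref{eq:ass:g}) inherited from~\ref{hyp:parx-model-L-ell}\ref{eq:assum:parx:i}. The coupling condition~\ref{assum:alpha-phi:gen}, which you flag as requiring a delicate disentangling of the two mechanisms, is dispatched in the paper by a short explicit construction: reformulate~\ref{item:assum:parx:vi} as a minorization $\ell(\xi;\cdot)\wedge\ell(\xi';\cdot)\geq\alpha_\X(\xi,\xi')\underh(\xi,\xi';\cdot)$ with $1-\alpha_\X\leq M\|\xi-\xi'\|$ (\autoref{rem:assumption:tv}), set $\alpha(\bar x,\bar x')=\rme^{-|x-x'|}\alpha_\X(\xi,\xi')$ with the Poisson density evaluated at $x\wedge x'$, and --- crucially --- take $\WX$ and $\WU$ constant, so that the drift requirements~\ref{assum:alpha-phi:gen}\ref{assum:driftCond:W:new:gen:od} or~\ref{assum:driftCond:W:new:gen:od:new} hold trivially. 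The genuinely laborious verification is~\ref{assum:V:gen:X}, namely~(\ref{eq:ergoVX}), via the first-moment recursion $\PE_z[X_n]\leq\omega+\sum_i c_i\PE_z[X_{n-i}]+C\varrho^n\vnorm[V_\X]{\func_\gamma}V_\X(\Xi_{-1})+\pi_\X(\func_\gamma)$ combined with \autoref{lem:affine-rec} and \autoref{lem:assum:parx:i}\ref{rem:assum:parx:i-2}, which your sketch describes correctly but only at a high level.
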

The proof is postponed to \autoref{sec:proof-autor-conv:parx}. 
Let us briefly comment on our assumptions.
\begin{remark}
\label{rem:assumptions:parx}
Contrary to the previous example, the PARX model requires an
additional Markov kernel $L$ and therefore additional assumptions on
this kernel. We briefly comment on them hereafter and compare our
assumptions to those in \cite{AGOSTO2016640}.
\begin{enumerate}[(i)]
\item Assumptions
\ref{hyp:parx-model-L-ell}--\ref{item:assum:parx:v} are classical
assumptions on Markov kernels to ensure its stability and
ergodicity. In \cite[Assumption~2]{AGOSTO2016640}, a different approach is used and
instead a first order contraction of the random iterative  functions
defining the Markov transition is used. Their assumption would
typically imply our
\ref{item:assum:parx:v} with $V_\X(\xi)=1+\|\xi\|^s$ for some $s\geq1$,
with their Assumption(3)(ii) implying our
\ref{item:assum:parx:v}\ref{item:assum:parx:v:c}.
\item \label{item:assumption:tv} Our
  Assumption~\ref{item:assum:parx:vi} is inherited from our approach
  for proving ergodicity using the embedding of the PARX model into a
  VLODM model detailed in \autoref{exmpl:parx}. It is not clear to us
  whether it can be omitted for proving ergodicity. This question is
  left for future work. Note that we provide another formulation of
  this assumption in \autoref{rem:assumption:tv}, see
  \autoref{sec:proof-autor-conv:parx}. There is no equivalent of our
  Assumption~\ref{item:assum:parx:vi} in \cite{AGOSTO2016640} as their
  technique for proving ergodicity is different. However, although we
  have the same condition~(\ref{eq:cond:ergo-convergence:parx:one}) as
  them for ergodicity, see their Assumption~3(i), they require an
  additional condition, Assumption~3(iii), which we do not
  needed. This condition involves both the coefficients $a_i$ and
  $b_i$ and the contraction constant $\rho$ used in their stability
  assumption for the Markov transition of the covariates. In contrast,
  our condition on the parameters $a_i$ and $b_i$ of the model, merely 
  imposed through~(\ref{eq:cond:ergo-convergence:parx:one}), are
  completely separated from our
  assumptions~\ref{hyp:parx-model-L-ell}--\ref{item:assum:parx:vi} on
  the dynamics of the covariates.
\item Assumptions~\ref{assum:thm:parx:equiv:conv:mle:gen:od:L}
  and~\ref{item:PARX--non-degenerateident-G-X} are not required to get
  ergodicity. they are needed to establish the equivalence class
  consistency of the MLE and the identifiability of the model. Like
  our Assumption~\ref{item:assum:parx:vi} for proving ergodcity,
  Assumption~\ref{assum:thm:parx:equiv:conv:mle:gen:od:L} is inherited 
  from the embedding of the PARX model into a VLODM model here used
  for proving the equivalent class consistency.
  Assumption~\ref{item:PARX--non-degenerateident-G-X} is a mild and
  natural identifiability assumption. It basically says that the
  covariates $f_1(\xi_{k}),\dots,f_d(\xi_{k})$ are not linearly
  related conditionally to $\xi_{k-1 }$. If they were, it would
  suggest using a smaller set of covariates. The identifiability
  condition of \cite{AGOSTO2016640} is different as it involves a
  condition both on the covariate distribution and on the parameters
  $a_1,\dots,a_p$ and $b_1,\dots,b_q$, see their Assumption~5.
\end{enumerate}
\end{remark}
To conclude this section, let us carefully examine the simple case
where the covariates are assumed to follow a Gaussian linear dynamic
and see, in this specific case, how our assumptions compare to that of
\cite{AGOSTO2016640}. More precisely, assume that $L$ is defined by
the following equation on the exogenous variables
\begin{equation}
  \label{eq:linear-gaussian-covariate-dynamics}
\Xi_t=\aleph\; \Xi_{t-1} + \sigma \eta_t\;,
  \end{equation}
where $\aleph$ is an $r\times r$ matrix with spectral radius 
$\rho(\aleph)\in(0,1)$,  $\sigma>0$, and
$\eta_t \sim {\mathcal N}(0,I_r)$. Then
Assumption~\ref{hyp:parx-model-L-ell} holds
with
$$
\ell(\xi,\xi')=(2\pi \sigma^2)^{-r/2} \rme^{-\|\xi'-\aleph
  \xi\|^2/(2\sigma^2)}\;,
$$
and $\nu_\X$ being the Lebesgue measure on $\rset^r$. It is
straightforward to check~\ref{hyp:parx-model-wf}
and~\ref{item:assum:parx:v} with $V_\X(\xi)=\rme^{\lambda\,\|\xi\|}$
for any $\lambda>0$.

Let us now
check that Assumption \ref{item:assum:parx:vi} holds. First note that,
setting $f(x)= \rme^{-x^2/2}$, we have
$\ell(\xi,\xi'')=\frac{1}{(2\pi\sigma^2)^{r/2}}f(\|\xi''-\aleph\xi\|)$.
Then, for all $\xi,\xi',\xi''\in\rset^r$ such that
$\|\xi-\xi'\|\leq \epsilon$, which implies
$\left\|\aleph(\xi-\xi')\right\|\leq\|\aleph\| \epsilon$, where
$\|\aleph\|$ denotes the operator norm of $\aleph$, and thus
\begin{align*}
|\ell(\xi,\xi'')-\ell(\xi',\xi'')|&\leq \frac{\|\aleph\|}{(2\pi\sigma^2)^{r/2}}\,\|\xi-\xi'\|\,
\sup_{\|\xi''\|-\|\aleph\|\epsilon\leq x\leq
                                    \|\xi''\|+\|\aleph\|\epsilon}\left|f'\left(x\right)\right|\\
                                  &\leq \frac{\|\aleph\|}{(2\pi\sigma^2)^{r/2}}\,\|\xi-\xi'\|\,
                                    \left(\|\xi''\|+\|\aleph\|\epsilon\right)\;\rme^{-\left(\|\xi''\|-\|\aleph\|\epsilon\right)_+^2/2}
\;.
\end{align*}
Then we get that for all $\xi,\xi'\in\rset^r$ such that
$\|\xi-\xi'\|\leq \epsilon$,
\begin{align*}
\tvdist{L(\xi,\cdot)}{L(\xi',\cdot)}=\frac12 \int_{\rset^r} |\ell(\xi,\xi'')-\ell(\xi',\xi'')|\; \rmd \xi'' \leq M_0 \|\xi-\xi' \|  \eqsp, 
\end{align*}
for some positive constant $M_0$ only depending on $\|\aleph\|$, $\sigma$ and $\epsilon$.
Finally, for all $\xi,\xi'\in\rset^r$,
$$
\tvdist{L(\xi,\cdot)}{L(\xi',\cdot)}\leq \lr{M_0  \1_{\|\xi-\xi' \| \leq \epsilon}+\epsilon^{-1} \1_{\|\xi-\xi' \| > \epsilon}}\|\xi-\xi' \|\eqsp, 
$$
and Assumption \ref{item:assum:parx:vi} holds.

Let us now check
Assumption~\ref{assum:thm:parx:equiv:conv:mle:gen:od:L}. Assumptions~\ref{assum:thm:parx:equiv:conv:mle:gen:od:L}\ref{assum:thm:parx:equiv:conv:mle:gen:od-1}
and~\ref{assum:thm:parx:equiv:conv:mle:gen:od:L}\ref{assum:thm:parx:equiv:conv:mle:gen:od-2}
are immediate. For all $(\xi,\xi',\xi'')\in\rset^{3r}$, we have
\begin{align*}
  \left|\ln\frac{\ell(\xi;\xi'')}{\ell(\xi';\xi'')}\right|
  &=\frac1{2\sigma^2}\left|\|\xi''-\aleph\xi\|^2-\|\xi''-\aleph\xi'\|^2\right|\\
  &\leq\frac{\|\aleph\|}{2\sigma^2}\,\|\xi-\xi' \|\, (1+\|\aleph\|\,(\|\xi\|+\|\xi'\|))\, (1+\|\xi''\|)\\
  &\leq \mathrm{h}_\X(\|\xi-\xi'\|) \,
\rme^{C_\X \, \left(1+\|\xi\|\vee\|\xi'\right)}\;\bar\phi_\X(\xi'')\;.
\end{align*}
Thus
\ref{assum:thm:parx:equiv:conv:mle:gen:od:L}\ref{assum:thm:parx:ineq:loggg:gen:od}
holds with $\mathrm{h}_\X(u)=({\|\aleph\|}/{2\sigma^2})\,u$,
$C_\X=\|\aleph\|$ and
$\bar\phi_\X(\xi)=1+\|\xi\|$. Then, \ref{assum:thm:parx:equiv:conv:mle:gen:od:L}\ref{assum:thm:parx:barphi:bar:phi:gen:od}
and
\ref{assum:thm:parx:equiv:conv:mle:gen:od:L}\ref{assum:thm:parx:H:gen:od}
follow from these choices of $\mathrm{h}_\X$ and $\bar\phi_\X$. We
also get
\ref{assum:thm:parx:equiv:conv:mle:gen:od:L}\ref{assum:thm:parx:22or23-barphi-V:gen:od}
since $V_\X(\xi)=\rme^{\lambda\|\xi\|}$ for some $\lambda>0$.

Finally~\ref{item:PARX--non-degenerateident-G-X} immediately holds
since $\nu_\X$ is the Lebesgue measure on $\rset^r$.

Having
shown~\ref{hyp:parx-model-L-ell}--\ref{item:PARX--non-degenerateident-G-X}
for covariates satisfying the
dynamics~(\ref{eq:linear-gaussian-covariate-dynamics}),
\autoref{theo:ergo-convergence:parx} applies in this case under the sole
condition~(\ref{eq:cond:ergo-convergence:parx:one}). In comparison,
checking the assumptions for ergodicity in \cite{AGOSTO2016640} require
the additional assumption
$$
\sum_{i=1}^{p}a_i+\sum_{i=1}^q b_i < 1-
\left(\left(1-\sum_{i=1}^pa_i\right)(\rho(\aleph)-b_1)\right)_+ \;,
$$
see their Assumption~3(iii).

\section{General Results}\label{sec:main-result:gen:od}
\subsection{Preliminaries}
\label{sec:preliminaries}
In the well-specified setting, a general result on the consistency of the MLE
for a class of first-order ODMs has been obtained in
\cite{dou:kou:mou:2013}. Let us briefly describe the approach used to establish
the convergence of the MLE $\mlY{\initmle,n}$ in this reference and in the
present contribution for higher order ODMs. Let $\thv\in\Theta$
denote the true parameter. The consistency of the MLE is obtained through
the following steps.
\begin{enumerate}[label=\textbf{Step}~\arabic*]
\item\label{item:ergo:step} Find sufficient conditions for the ergodic
  property~\ref{assum:gen:identif:unique:pi:gen} of the model. Then the convergence of the MLE to
$\thv$ is studied under $\tilde{\PP}^\thv$ as defined in \autoref{def:equi:theta:gen:od}.
\item\label{item:likelihood:step} Establish that,
as the number of observations $n\to\infty$, the normalized log-likelihood
$\mathsf{L}_{\initmle,n}^\theta$ as defined in \eqref{eq:defi:lkdM:gen:od}, for some
well-chosen $\initmle\in\Xset^p$, can be approximated by
$$
n^{-1} \sum_{k=1}^n \ln p^{\theta}(Y_k|Y_{-\infty:k-1}),
$$
where $p^{\theta}(\cdot|\cdot)$ is a $\tilde\PP^\thv\as$ finite real-valued
measurable function defined on $(\Yset^\zset,\Ysigma^{\otimes\zset})$.  To
define $p^{\theta}(\cdot|\cdot)$, we set, for all
$\chunk{y}{-\infty}{0}\in\Yset^{\zsetn}$ and $y\in \Yset$, whenever the following limit is well
defined,
  \begin{equation}
    \label{eq:def-p-theta-neq-thv:gen:od}
    p^{\theta}\left(y\,|\,\chunk{y}{-\infty}{0}\right) =
    \lim_{m\to\infty}
    g^\theta\left(\f[\theta]{\chunk{y}{-m}{0}}(\initmle);y\right)\;.
  \end{equation}
\item\label{item:likelihood:conv:step} By~\ref{assum:gen:identif:unique:pi:gen}, the observed process $\{Y_k:k\in\zset\}$ is ergodic under
$\tilde\PP^{\thv}$ and provided that
$$
\tilde\PE^\thv\left[\lnp p^{\theta}(Y_1|Y_{-\infty:0})\right]<\infty\eqsp,
$$
 it then follows that
$$
\lim_{n\to\infty}\mathsf{L}_{\initmle,n}^\theta=\tilde\PE^\thv\left[\ln p^{\theta}(Y_1|Y_{-\infty:0})\right]\eqsp,\quad\tilde\PP^\thv\as
$$
\item\label{item:mle:conv:step} Using an additional argument (similar to that in \cite{pfanzagl:1969}),
  deduce that the MLE $\mlY{\initmle,n}$ defined
  by~(\ref{eq:defi:mle-od:gen}) eventually lies in any given neighborhood of
  the set
\begin{equation}\label{eq:def-Theta-star-set:gen:od}
\Theta_\star=\argmax_{\theta\in\Theta}\tilde{\PE}^{\thv}\left[\ln
  p^{\theta}(Y_1|\chunk{Y}{-\infty}{0})\right],
\end{equation}
which only depends on $\thv$, establishing that
\begin{equation}\label{eq:strong-consistency-prelim:gen:od}
\lim_{n\to\infty}\met(\mlY{\initmle,n},\Theta_\star)=0,\quad \tilde\PP^{\theta_\star}\as,
\end{equation}
where $\met$ is the metric endowing the parameter space $\Theta$.
\item\label{item:maximizing:set:equivalence:step} Establish that $\Theta_\star$ defined
  in~(\ref{eq:def-Theta-star-set:gen:od}) reduces to the equivalent class
  $[\thv]$ of \autoref{def:equi:theta:gen:od}. The
  convergence~(\ref{eq:strong-consistency-prelim:gen:od}) is then called the
  \emph{equivalence-class consistency} of the MLE.
\item\label{item:identifiability:step}  Establish that   $[\thv]$  reduces to the singleton $\{\thv\}$. The
  convergence~(\ref{eq:strong-consistency-prelim:gen:od}) is then called the
  \emph{strong consistency} of the MLE.
\end{enumerate}
In \cite{douc2015handy}, we provided easy-to-check conditions on first order
ODMs for obtaining \ref{item:ergo:step} to \ref{item:mle:conv:step}.
See \cite[Theorem~2]{douc2015handy} for \ref{item:ergo:step}, and
\cite[Theorem~1]{douc2015handy} for the following steps.
In \cite{doumonrou2016maximizing}, we proved a general result for partially observed
Markov chains, which include first order ODMs, in order to
get \ref{item:maximizing:set:equivalence:step}, see Theorem~1 in this
reference. Finally \ref{item:identifiability:step} is often carried out using
particular means adapted to the precise considered model.

We present in \autoref{sec:ergodicity:gen:od} the conditions that we
use to prove ergodicity (\ref{item:ergo:step}) and, in
\autoref{sec:convergence:mle}, we adapt the conditions already used in
\cite{douc2015handy,doumonrou2016maximizing} to carry
out~\ref{item:likelihood:step}
to~\ref{item:maximizing:set:equivalence:step} for first order model to
higher order ODMs.

Using the embedding described in \autoref{sec:embedding-into-an11}, all the
steps from \ref{item:ergo:step} to \ref{item:maximizing:set:equivalence:step}
can in principle be obtained by applying the existing results to the first
order ODMs in which the original higher order model is embedded.
This approach is indeed successful, up to some straightforward adaptation, for
\ref{item:likelihood:step} to
\ref{item:maximizing:set:equivalence:step}. Ergodicity in \ref{item:ergo:step}
requires a deeper analysis that constitutes the main part of this
contribution. As for~\ref{item:identifiability:step}, it is treated in
\cite{douc-roueff-sim_ident-genod2020}.

\subsection{Ergodicity}\label{sec:ergodicity:gen:od}

In this section, we provide conditions that yield stationarity and ergodicity
of the Markov chain $\nsequence{(Z_k,Y_k)}{k\in\zsetp}$, that is, we
check~\ref{assum:gen:identif:unique:pi:gen}
and~\ref{ass:21-lyapunov:gen:od}. We will set $\theta$ to be an arbitrary value
in $\Theta$ and since this is a ``for all $\theta$ (...)'' condition, to save
space and alleviate the notational burden, we will drop the superscript
$\theta$ from, for example, $G^\theta$ and $\psi^\theta$ and
respectively write $G$ and $\psi$, instead.

Ergodicity of Markov chains is  usually studied using
$\varphi$-irreducibility. This approach is well known to be quite efficient
when dealing with fully dominated models; see \cite{meyn:tweedie:2009}. It is
not at all the same picture for integer-valued observation-driven models, where
other tools need to be invoked;
see~\cite{fokianos:tjostheim:2011,dou:kou:mou:2013,douc2015handy} for
ODMs(1,1). Here we extend these results for general order
ODMs$(p,q)$. Let us now introduce our list of assumptions. They
will be further commented after they are all listed.

We first need some metric on the space $\Zset$
and assume the following.
\begin{hyp}{A}
\item\label{item:CCMShyp} The  $\sigma$-fields $\Xsigma$ and $\Usigma$
are Borel ones, respectively associated to $(\Xset,\Xmet)$ and
$(\Uset,\Umet)$, both assumed to be complete and separable metric spaces.
\end{hyp}
For an LODM($p,q$), the following condition,  often referred
to as the \emph{invertibility} condition, see \cite{straumann06}, is
classically assumed. 
\begin{hyp}{I}
\item\label{item:lodm-ident-cond-cond-dens-def} For all
  $\theta\in\Theta$, we have
  $\chunk{A}{1}{p}(\theta) \in \mathcal{S}_p$,
\end{hyp}
where $\mathcal{S}_p$ is defined in~(\ref{eq:char-poly-roots-ok}). For
an ODM$(p,q)$ with a possibly non-linear link
function,~\ref{item:lodm-ident-cond-cond-dens-def} is replaced by a
uniform contracting condition on the iterates of the link function,
see~\ref{assum:bound:rho:gen} below. In order to write this condition
in this more general case, recall that any finite $\Yset$-valued
sequence $y$, $\f{y}$ is defined by~(\ref{eq:lkd:psi:n:def:rec}) with
the recursion~(\ref{eq:pq-order-rec-equation-with-u}).  Next, we 
may rewrite these iterates directly in terms of
$\chunk u0{(n-1)}$ instead of $\chunk y0{(n-1)}$. Namely, we can
define
\begin{align}
\label{eq:lkd:psi:n:def:rec:withu}
&  \tf{\chunk{u}0{(n-1)}}(z)\eqdef x_{n} \;,\text{ with
  $x_n$ defined by \eqref{eq:pq-order-rec-equation-with-u}}
\end{align}
so that $\f{\chunk{y}0{(n-1)}}(z)=\tf{\Upsilon^{\otimes
    n}(\chunk{y}0{(n-1)})}(z)$ for all $z\in\Zset$ and
$\chunk{y}0{(n-1)}\in\Yset^n$.
Now define, for all $n\in\zsetpnz$, the Lipschitz constant for $\tf{u}$, uniform over
$u\in\Uset^{n}$,
  \begin{equation}
    \label{eq:Lip:constant:n:def}
\mathrm{Lip}_n^\theta=\sup\set{\frac{\Xmet(\tf{u}(z),\tf{u}(z'))}{\Zmet(z,z')}}{(z,z',u)\in\Zset^{2}\times\Uset^{n}}\;,
\end{equation}
where we set, for all $v\in\Zset^2$,
\begin{equation}
  \label{eq:def:Zmet}
  \Zmet(v)=\left(\max_{1\leq k\leq p}\Xmet\circ\Proj
    k^{\otimes2}(v)\right)\,\bigvee\,
  \left(\max_{p< k<p+q}\Umet\circ\Proj k^{\otimes2}(v)\right)\;.
\end{equation}
We use the following assumption on a general link function.
\begin{hyp}{A}
\item\label{assum:bound:rho:gen}  For all $\theta\in\Theta$, we have $\mathrm{Lip}_1^\theta<\infty$ and
  $\mathrm{Lip}_n^\theta\to0$ as $n\to\infty$.
\end{hyp}
The following assumption is mainly related to the observation kernel
$G$ and relies on the metrics introduced in~\ref{item:CCMShyp} and on the iterates of the link functions defined
in~(\ref{eq:lkd:psi:n:def:rec:withu}).
\begin{hyp}{A}
\item  \label{assum:weak:feller:X}
The space $(\Xset,\Xmet)$ is locally compact and if $q>1$, so is $(\Uset,\Umet)$.
 For all $x\in\Xset$, there exists $\delta>0$ such that
  \begin{equation}
    \label{eq:uniform-dom-conv}
    \int\sup\set{g(x';y)}{x'\in\Xset\,,\,\Xmet(x',x)<\delta}\;\nu(\rmd y)<\infty\;.
  \end{equation}
  Moreover, one of the two following assertions
  hold.
  \begin{enumerate}[label=(\alph*)]
  \item \label{assum:feller:atom} The kernel $G$ is strong Feller.
  \item \label{assum:feller:cont} The kernel $G$ is weak Feller and the
    function $u\mapsto\tf[]u(z)$ defined in~(\ref{eq:lkd:psi:n:def:rec:withu}) is
  continuous on $\Uset$ for all $z\in\Zset$.
  \end{enumerate}
\end{hyp}
The definitions of weak and strong Feller
in~\ref{assum:feller:atom} and~\ref{assum:feller:cont} correspond to
Feller and strong Feller of \cite[Defintion~12.1.1]{douc2018markov}.
Next, we consider a classical drift condition used for showing the existence of
an invariant probability distribution.
\begin{hyp}{A}
\item \label{assum:V:gen:X} There exist measurable functions
  $\VX:\Xset\to\rsetp$ and $\VU:\Uset\to\rsetp$ such that, setting $\VY=\VU\circ\Upsilon$, $G\VY\lesssim\VX$,
  $\{\VX\leq M\}$ is compact for any $M>0$, and so is $\{\VY\leq M\}$ if $q>1$,
  and
  \begin{equation}
    \label{eq:ergoVX}
    \lim_{n\to\infty}\lim_{M\to\infty}\sup_{z\in\Zset}\frac{\PE_z\left[\VX(X_n)\right]}{M+V(z)} =0\;,
  \end{equation}
  where we defined
  \begin{equation}
    \label{eq:VVX:def}
    V(z)=\max_{\stackrel{1\leq k\leq p}{{p<\ell<p+q}}}\left\{\VX(\Projarg kz),\frac{\VU(\Projarg {\ell}z)}{\vnorm[\VX]{G\VY}}\right\}\;.
  \end{equation}
\end{hyp}
The following condition is used to show the existence of a reachable
point.
\begin{hyp}{A}
\item \label{assum:reaching:mild} The conditional density of $G$ with
  respect to $\nu$ satisfies,  for all
$(x,y) \in \Xset \times \Yset$,
\begin{equation}\label{eq:ass:g}
g(x;y)>0\;,
\end{equation}
and one of the two following assertions
  hold.
  \begin{enumerate}[label=(\alph*)]
  \item \label{assum:reaching:atom}   There exists $y_0\in\Yset$ such that
    $\nu(\{y_0\})>0$.
  \item \label{assum:reaching:cont}  The function $(x,u)\mapsto\tilde\psi_u(x)$ is
  continuous on $\Xset^p\times\Uset^{q}$.
  \end{enumerate}
\end{hyp}
The last assumption is used to show the uniqueness of the invariant probability
measure, through a coupling argument.  It requires the following definition,
used in a coupling argument.  Under  \ref{assum:alpha-phi:gen}-\ref{item:alpha-phi:3:gen} (defined below), for any initial distribution $\Xinit$ on
$(\Zset^2,\Zsigma^{\otimes2})$, let $\hat\PE_\Xinit$ denote the expectation
(operator) associated to the distribution of
$\nsequence{X_k,X'_k,U_{k'},U'_{k'}}{k>-p,\ k'>-q}$ satisfying 
$(\chunk X{(-p+1)}{0},\chunk U{(-q+1)}{-1},\chunk {X'}{(-p+1)}{0},\chunk
{U'}{(-q+1)}{-1})\sim\Xinit$ and, for all $k\in\zsetp$,
  \begin{equation}
    \label{eq:Rhatdef:XXY}
    \begin{split}
&      Y_{k}|\mathcal{F}'_k\sim \underG(X_{k},X'_{k};\cdot)
\quad\text{and}\quad Y'_k=Y_{k}\;,\\
&      X_{k+1}=\tilde \psi_{\chunk U{(k-q+1)}k}(\chunk X{(k-p+1)}k)\quad\text{and}\quad U_k=\Upsilon(Y_{k})\;,\\
&      X'_{k+1}=\tilde \psi_{\chunk{U'}{(k-q+1)}k}(\chunk {X'}{(k-p+1)}k)\quad\text{and}\quad U'_k=\Upsilon(Y'_{k})\;.
    \end{split}
  \end{equation}
where $\mathcal{F}'_k=\sigma\left(\chunk
  X{(-p+1)}{k},\chunk{U}{(-q+1)}{(k-1)},\chunk {X'}{(-p+1)}{k},\chunk
  {U'}{(-q+1)}{(k-1)}\right)$.
\begin{hyp}{A}
\item \label{assum:alpha-phi:gen} There exist measurable functions
  $\alpha:\Xset^2\to[0,1]$,
  $\WX: \Xset^2 \to [1,\infty)$, $\WU:\Uset\to\rsetp$ and a Markov kernel $\underG$ on $\Xset^2 \times \Ysigma$, dominated by $\nu$ with kernel density $\underg(x,x';y)$ such that, setting
  $\WY=\WU\circ\Upsilon$, we have $\underG\WY\lesssim\WX$ and the three following
  assertions hold.
  \begin{enumerate}[label=(\roman*),series=ergosubassump]
  \item\label{item:alpha-phi:3:gen} For all $(x,x')\in\Xset^2$ and $y\in\Yset$,
\begin{equation}\label{eq:con:min-g-phi-gen:od}
\min\left\{g(x;y),g(x';y)\right\}\ge\alpha(x,x')\underg\left(x,x';y\right)\eqsp.
\end{equation}
\item  \label{assum:definition-gamma-x:gen} The function $\WX$ is symmetric on
  $\Xset^2$,
  $\WX(x,\cdot)$ is locally bounded for all $x \in \Xset$, and $\WU$ is locally bounded on $\Uset$.
\item  \label{assum:hyp:1-alpha:W:gen}
We have $1-\alpha\leq \Xmet\times \WX$
on $\Xset^2$.
\end{enumerate}
And, defining,  for all $v=(z,z')\in\Zset^2$,
  \begin{equation}
    \label{eq:Wbar:def}
    W(v)=\max\set{\WX\circ\Proj k^{\otimes2}(v),\,\frac{\WU(\Projarg {\ell}{\tilde
          z})}{\vnorm[\WX]{\underG\WY}}}
    {\substack{1\leq k\leq p\\\tilde z\in\{z,z'\}\\p<\ell<p+q}}\;,
  \end{equation}
 one of the two following assertions holds.
\begin{enumerate}[resume*=ergosubassump]
\item  \label{assum:driftCond:W:new:gen:od}
 $\displaystyle \lim_{\zeta\to\infty}\limsup_{n\to\infty}\frac1n\ \ln\,\sup_{v\in\Zset^2}\frac{\hat\PE_v\left[\WX(X_n,X_n')\right]}{W^\zeta(v)}\leq0.$
\item \label{assum:driftCond:W:new:gen:od:new}
  $\displaystyle\lim_{n\to\infty}\lim_{M\to\infty}\sup_{v\in\Zset^2}\frac{\hat\PE_v\left[\WX(X_n,X'_n)\right]}{M+W(z)}
  =0$ and, for all $r=1,2,\dots$, there exists $\tau\geq1$ such that
  $\displaystyle\sup_{v\in
    \Zset^2}\hat\PE_{v}\left[\WX(X_r,X'_r)\right]/W^{\tau}(v)<\infty$.
\end{enumerate}
\end{hyp}
\begin{remark}
  \label{rem:ergo:ass}
  Let us comment briefly on these assumptions.
  \begin{enumerate}[label=(\arabic*)]
  \item \label{item:ergo:ass:0} In many examples, \eqref{eq:con:min-g-phi-gen:od} is satisfied with
  \begin{equation} \label{eq:def:phi}
  \underg(x,x';y)=g(\phi(x,x');y)
  \end{equation}
  where $\phi$ is a measurable function from $\Xset^2$ to $\Xset$, in which case, $\underG \WY$ should be replaced by $G \WY \circ \phi$ in \ref{assum:alpha-phi:gen}.
  \item\label{item:ergo:ass:1}   If $\tilde\psi^\theta$ is of the
  form~(\ref{eq:tilde-psi:affine:vector:case})  with $p'=q'=1$, then
  \ref{assum:bound:rho:gen} is equivalent
  to \ref{item:lodm-ident-cond-cond-dens-def}.
\item If $q=1$, the terms depending on $\ell$  both in~(\ref{eq:VVX:def})
  and~(\ref{eq:Wbar:def}) vanish. We can take $\VU=\WU=0$ without loss of generality in
  this case.
\item \label{item:rem:weak-feller-argument}
  Recall that a kernel is strong (resp. weak) Feller if it maps any bounded measurable
  (resp. bounded continuous) function to a bounded
  continuous function. By Scheffé's lemma, a sufficient condition for $G$ to be
  weak Feller is to have that $x\mapsto g(x;y)$ is continuous on $\Xset$ for
  all $y\in\Yset$. But then~(\ref{eq:uniform-dom-conv}) gives that $G$ is
  also strong Feller by dominated convergence.
\item \label{item:rem:uniform-dom-conv} Note that
  Condition~(\ref{eq:uniform-dom-conv}) holds when $G(x;\cdot)$ is taken among
  an exponential family with natural parameter continuously depending on $x$
  and valued within the open set of natural parameters, in which case $G$ is
  also strong Feller. We are in this
  situation for both Examples~\ref{expl:log-lin-poi}
  and~\ref{exmpl:nbin-garch:gen}.
\end{enumerate}
\end{remark}
We can now state the main ergodicity result.
\begin{theorem}\label{thm:ergodicity:gen}
  Let $\PP_z$ be defined as $\PP_z^\theta$ in \autoref{def:obs-driv-gen}.
  Conditions~\ref{item:CCMShyp},
  \ref{assum:bound:rho:gen},~\ref{assum:weak:feller:X},~\ref{assum:V:gen:X},~\ref{assum:reaching:mild}
  and~\ref{assum:alpha-phi:gen} imply that there exists a unique initial
  distribution $\pi$ which makes $\PP_\pi$ shift invariant. Moreover it
  satisfies $\PE_\pi[\VX(X_0)]<\infty$. Hence, provided that these assumptions
  hold at each $\theta\in\Theta$, they
  imply~\ref{assum:gen:identif:unique:pi:gen} and~\ref{ass:21-lyapunov:gen:od}.
\end{theorem}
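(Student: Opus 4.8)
The plan is to view $\nsequence{Z_k}{k\in\zsetp}$, with $Z_k\in\Zset$ as in \eqref{eq:Zk:def}, as a Markov chain: $Z_{k+1}$ is a measurable function of $Z_k$ and of $Y_k\sim G(X_k;\cdot)$, through $U_k=\Upsilon(Y_k)$ and the link recursion, so that a unique shift-invariant $\PP_\pi$ amounts to a unique invariant distribution $\pi$ for this chain, whose transition kernel I denote by $P$. I would prove existence and uniqueness of $\pi$ separately, the moment bound being a byproduct of the drift used for existence.

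For existence, the first step is to check that $P$ is weak Feller. Writing one step as an integral against $G(x;\cdot)$ and using \ref{item:CCMShyp} and \ref{assum:weak:feller:X}, continuity of $z\mapsto\PE_z[f(Z_1)]$ for bounded continuous $f$ follows from the Feller property of $G$, while the domination bound \eqref{eq:uniform-dom-conv} permits passage to the limit under the integral by dominated convergence; in case~\ref{assum:feller:cont} the continuity of $u\mapsto\tilde\psi_u$ propagates continuity through the recursion. Next, the drift condition \eqref{eq:ergoVX} of \ref{assum:V:gen:X} provides tightness: since the sublevel sets $\{\VX\le M\}$ and $\{\VY\le M\}$ are compact and $\PE_z[\VX(X_n)]$ is, for large $n$, dominated by an arbitrarily small multiple of $M+V(z)$, the Ces\`aro averages $N^{-1}\sum_{n<N}\delta_z P^n$ keep their mass on compact sets. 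A Krylov--Bogolyubov argument then produces an invariant $\pi$, and $\PE_\pi[\VX(X_0)]<\infty$ follows from \eqref{eq:ergoVX} by lower semicontinuity of $\VX$ and Fatou's lemma.

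Uniqueness is the crux and I would obtain it by coupling. The minorization \eqref{eq:con:min-g-phi-gen:od} splits $G(x;\cdot)=\alpha(x,x')\,\underG(x,x';\cdot)+\bigl(G(x;\cdot)-\alpha(x,x')\,\underG(x,x';\cdot)\bigr)$ into a common part drawn with probability $\alpha(x,x')$ and a nonnegative residual. Coupling two copies from $z$ and $z'$ so that, at each step, with probability $\alpha$ both observe the \emph{same} $Y\sim\underG$ (hence share $U=U'$) and otherwise draw independently from the residuals, preserves both marginals. On the event that observations have been shared up to time $n$, the two hidden chains are driven by the same $U$-sequence, so \eqref{eq:Lip:constant:n:def} gives $\Xmet(X_n,X_n')\le\mathrm{Lip}_n^\theta\,\Zmet(Z_0,Z_0')\to0$ by \ref{assum:bound:rho:gen}; this shared regime is exactly the auxiliary dynamics \eqref{eq:Rhatdef:XXY} on which \ref{assum:driftCond:W:new:gen:od} and \ref{assum:driftCond:W:new:gen:od:new} are stated.

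The main obstacle is to show the chains enter and remain in this sharing regime. Here \ref{assum:reaching:mild} is essential: positivity $g>0$, combined with the atom in case~\ref{assum:reaching:atom} (or the continuity in case~\ref{assum:reaching:cont}) and the contraction \ref{assum:bound:rho:gen}, produces a point approached with positive probability from every initial state, bringing both copies into a region where $\Xmet(X_k,X_k')$ is small. I would then control the coupling failure through \ref{assum:hyp:1-alpha:W:gen}, bounding the per-step failure probability by $\Xmet(X_k,X_k')\,\WX(X_k,X_k')$: the first factor is driven to $0$ by the contraction while the second is kept integrable by the drift on $\WX$ along \eqref{eq:Rhatdef:XXY}, with the weight $W$ of \eqref{eq:Wbar:def} serving as Lyapunov function. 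The delicate bookkeeping is the summability of failure times and the separate handling of the geometric form \ref{assum:driftCond:W:new:gen:od} versus the uniform form \ref{assum:driftCond:W:new:gen:od:new}; it yields $\wasser{\delta_z P^n}{\delta_{z'}P^n}\to0$ for a suitable $\Xmet$-Wasserstein distance. Integrating this bound against $\pi\otimes\pi'$ for two invariant measures forces $\pi=\pi'$.
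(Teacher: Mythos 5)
Your proposal is correct and follows essentially the same route as the paper: the paper likewise treats $\nsequence{Z_k}{k\in\zsetp}$ as a Markov chain via the embedding of \autoref{sec:embedding-into-an11}, obtains existence and $\PE_\pi[\VX(X_0)]<\infty$ from the weak Feller property together with a drift $R^mV\leq\lambda V+\beta$ deduced from~(\ref{eq:ergoVX}) (invoking \cite[Theorem~2]{tweedie:1988} rather than a hand-rolled Krylov--Bogolyubov argument), and obtains uniqueness from exactly your $\alpha$/$\underG$ coupling, with~\ref{assum:bound:rho:gen} giving the contraction along shared observation sequences, \ref{assum:reaching:mild} supplying the reachable point, and the delicate failure-time bookkeeping you flag delegated to the uniqueness part of \cite[Theorem~6]{dou:kou:mou:2013}. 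The only divergence is presentational: you propose to re-derive the two black boxes the paper cites, and that is precisely where the remaining technical work would live.
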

For convenience, we postpone this proof to
\autoref{sec:ergodicity-proofs-gen:od}.

The following lemma provides a general way for constructing the instrumental
functions $\alpha$ and $\phi$ that appear in~\ref{assum:alpha-phi:gen} and in \autoref{rem:ergo:ass}-\ref{item:ergo:ass:0}. The
proof can be easily adapted from
\cite[Lemma~1]{douc2015handy} and is thus omitted.
\begin{lemma}\label{lem:det:alpha:gen:od}
  Suppose that $\Xset=\Cset^\Sset$ for some measurable space $(\Sset, \Ssigma)$
  and $\Cset\subseteq\rset$. Thus for all $x\in\Xset$, we write $x=(x_s)_{s\in
    \Sset}$, where $x_s\in\Cset$ for all $s\in\Sset$. Suppose moreover that for
  all $x=(x_s)_{s\in\Sset}\in\Xset$, we can express the conditional density
  $g(x;\cdot)$ as a mixture of densities of the form $j(x_s)h(x_s;\cdot)$ over
  $s\in\Sset$.  This means that for all $t\in\Cset$, $y\mapsto j(t)h(t;y)$ is a
  density with respect to $\nu$ and there exists a probability measure
  $\mu$ on $(\Sset, \Ssigma)$ such that
  \begin{equation}
    \label{eq:def-g-j-h-mu:gen:od}
g(x;y)=\int_\Sset {j(x_s)h(x_s;y)\mu(\rmd s)},\quad y\in\Yset\;.
\end{equation}
We moreover assume that $h$ takes nonnegative values and
that one of the two following assumptions holds.
\begin{hyp}{H'}
\item\label{item:F1:gen} For all $y\in\Yset$, the function $h(\cdot;y): t\mapsto h(t;y)$ is nondecreasing.
\item\label{item:F2:gen} For all $y\in\Yset$, the function $h(\cdot;y): t\mapsto h(t;y)$ is nonincreasing.
\end{hyp}
For all $x,x'\in\Xset^\Sset$, we denote $x\wedge x':=(x_s\wedge x_s')_{s\in \Sset}$ and $x\vee
x':=(x_s\vee x_s')_{s\in \Sset}$ and we define
$$
\begin{cases}
\displaystyle\alpha(x,x')=\inf_{s\in \Sset}{\left\{\frac{j(x_s\vee x'_s)}{j(x_s\wedge
      x'_s)}\right\}} \quad\text{and}\quad\phi(x,x')=x\wedge x'
&\text{ under\emph{~\ref{item:F1:gen}}};\\
\displaystyle\alpha(x,x')=\inf_{s\in \Sset}{\left\{\frac{j(x_s\wedge x'_s)}{j(x_s\vee x'_s)}\right\}}
\quad\text{and}\quad\phi(x,x')=x\vee x'
&\text{ under\emph{~\ref{item:F2:gen}}}.
\end{cases}
$$
Then $\alpha$ and $\phi$ defined above
satisfy~\ref{assum:alpha-phi:gen}\ref{item:alpha-phi:3:gen} and \eqref{eq:def:phi}.
\end{lemma}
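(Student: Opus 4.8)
The plan is to read off both conclusions directly from the mixture representation \eqref{eq:def-g-j-h-mu:gen:od}. Since $\underg(x,x';y)$ is \emph{set} to $g(\phi(x,x');y)$, the identity \eqref{eq:def:phi} holds by construction, so the entire task reduces to establishing the minorization \eqref{eq:con:min-g-phi-gen:od}, i.e.\ $\min\{g(x;y),g(x';y)\}\ge\alpha(x,x')\,g(\phi(x,x');y)$ for all $x,x'\in\Xset$ and $y\in\Yset$. I would treat case \ref{item:F1:gen} ($h(\cdot;y)$ nondecreasing, $\phi=x\wedge x'$) in detail; case \ref{item:F2:gen} then follows by the symmetric substitution $\wedge\leftrightarrow\vee$ together with the reversal of all monotonicities.

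The first and key preliminary observation is that the normalization constraint forces a monotonicity on $j$. Indeed, since $y\mapsto j(t)h(t;y)$ is a $\nu$-density for every $t\in\Cset$, we have $j(t)=\left(\int_\Yset h(t;y)\,\nu(\rmd y)\right)^{-1}$, which is strictly positive and finite. Under \ref{item:F1:gen}, $t\mapsto\int_\Yset h(t;y)\,\nu(\rmd y)$ is nondecreasing, hence $j$ is nonincreasing; consequently each ratio $j(x_s\vee x'_s)/j(x_s\wedge x'_s)\le1$, so that $\alpha(x,x')\in[0,1]$. This is precisely what makes $\alpha$ admissible for \ref{assum:alpha-phi:gen}, and, as we shall see, it is indispensable for the minorization itself.

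Next I would reduce the integral inequality to a pointwise one in $s$. It suffices to prove separately that $g(x;y)\ge\alpha(x,x')\,g(x\wedge x';y)$ and $g(x';y)\ge\alpha(x,x')\,g(x\wedge x';y)$, the minimum of the two left-hand sides then dominating the right-hand side. By \eqref{eq:def-g-j-h-mu:gen:od} it is enough to check, for every $s\in\Sset$, the integrand bound $j(x_s)h(x_s;y)\ge\alpha(x,x')\,j(x_s\wedge x'_s)h(x_s\wedge x'_s;y)$, together with its $x'$-analogue. I would combine two elementary facts: because $x_s\ge x_s\wedge x'_s$ and $h(\cdot;y)$ is nondecreasing and nonnegative, $h(x_s;y)\ge h(x_s\wedge x'_s;y)\ge0$; and $j(x_s)\ge\alpha(x,x')\,j(x_s\wedge x'_s)$, which I verify by splitting on whether $x_s\ge x'_s$ (then $x_s=x_s\vee x'_s$ and the bound is the very definition of the infimum $\alpha$) or $x_s<x'_s$ (then $x_s=x_s\wedge x'_s$ and the bound collapses to $\alpha\le1$). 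Chaining these two inequalities and integrating against $\mu$ yields the claim; the argument for $g(x';y)$ is identical after exchanging $x_s$ and $x'_s$, which leaves both $\alpha(x,x')$ and $x_s\wedge x'_s$ unchanged.

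The only genuine subtlety — the point where one might naively get stuck — is the region $\{s:x_s<x'_s\}$ (respectively $\{x_s>x'_s\}$ for the $x'$-bound), where the monotonicity of $h$ gives nothing and the whole inequality rests on $\alpha\le1$. This is why the normalization-driven monotonicity of $j$ from the second paragraph is not cosmetic but the crux: a one-point example ($\Sset$ a singleton, $x>x'$) shows the minorization can fail for $g(x';\cdot)$ exactly when $j$ is increasing. For case \ref{item:F2:gen} the same scheme applies verbatim with $h(\cdot;y)$ nonincreasing — whence $j$ nondecreasing and again $\alpha\in[0,1]$ — and with $\phi=x\vee x'$, now using $x_s\le x_s\vee x'_s$ to control the $h$-factor. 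Since every inequality employed is adapted line by line from \cite[Lemma~1]{douc2015handy}, no further machinery is needed.
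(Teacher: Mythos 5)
Your proof is correct, and it is essentially the argument the paper has in mind: the paper omits the proof as an easy adaptation of \cite[Lemma~1]{douc2015handy}, whose proof proceeds exactly as yours does — the normalization $j(t)=\bigl(\int h(t;y)\,\nu(\rmd y)\bigr)^{-1}$ forces $j$ to be monotone opposite to $h$ (hence $\alpha\in[0,1]$), followed by the coordinatewise case split on $x_s\gtrless x'_s$ and integration against $\mu$. Nothing further is needed; your observation that $\alpha\leq1$ carries the otherwise uncontrolled region $\{s: x_s<x'_s\}$ is precisely the crux of the cited proof as well.
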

\subsection{Convergence of the MLE}\label{sec:convergence:mle}
Once the ergodicity of the model is established, one can derive the asymptotic
behavior of the MLE, provided some regularity and moment condition holds
for going through \ref{item:likelihood:step} to
\ref{item:maximizing:set:equivalence:step}, as described in
\autoref{sec:preliminaries}. These steps are carried out using
 \cite[Theorem~1]{douc2015handy} and \cite[Theorem~3]{doumonrou2016maximizing},
 written for general ODMs(1,1). The adaptation to higher order
ODMs$(p,q)$ will follow easily from the embedding of
\autoref{sec:embedding-into-an11}. We consider the following assumptions, the
last of which uses $\VX$ as introduced in \autoref{def:equi:theta:gen:od}
under Assumptions~\ref{assum:gen:identif:unique:pi:gen} and~\ref{ass:21-lyapunov:gen:od}.
\begin{hyp}{B}
\item\label{assum:continuity:Y:g-theta:gen:od}
For all $y \in\Yset$, the function $(\theta,x) \mapsto g^\theta(x;y)$ is
continuous on $\Theta\times\Xset$.
\item\label{assum:continuity:Y:phi-theta:gen:od}
For all $y\in\Yset$, the function $(\theta,z)
\mapsto \f{y}(z)$ is continuous  on $\Theta\times\Zset$.
\item \label{assum:practical-cond:O:gen:od} There exist
  $\initmlex_1\in\Xset$,
  $\initmleu_1\in\Uset$, a closed set $\Xset_1\subseteq\Xset$,
  $C\geq0$, $\mathrm{h}:\rset_+\to\rset_+$ and a
  measurable function $\bar\phi:\Yset\to\rsetp$ such that the following
  assertions hold with
  $\initmle=(\initmlex_1,\dots,\initmlex_1,\initmleu_1,\dots,\initmleu_1)\in\Zset$.
\begin{enumerate}[label=(\roman*)]
\item \label{assum:exist:practical-cond:subX:gen:od}
For all
  $\theta\in\Theta$ and $(z,y)\in\Zset \times \Yset$, $\f{y}(z)\in\Xset_1$.
\item\label{assum:momentCond:gen:od} $\displaystyle\sup_{(\theta,x,y) \in\Theta\times\Xset_1\times\Yset}g^\theta(x;y)<\infty$. 
\item\label{item:barphi:bar:phi:gen:od} For all
  $y\in\Yset$ and  $\theta\in\Theta$,
  $\displaystyle\Xmet\left(\initmlex_1,\f{y}(\initmle)\right)\vee\Umet(\Upsilon(y),\initmleu_1)\leq\bar\phi(y)$.
\item\label{item:ineq:loggg:gen:od} For all $\theta\in\Theta$ and $(x,x',y)\in\Xset_1\times\Xset_1\times\Yset$,
\begin{equation}\label{eq:ineq:loggg:gen:od}
\left|\ln\frac{g^\theta(x;y)}{g^\theta (x';y)}\right|\leq \mathrm{h}(\Xmet(x,x')) \,
\rme^{C \, \left(\Xmet(\initmlex_1,x)\vee \Xmet(\initmlex_1,x')\right)}\;\bar\phi(y),
\end{equation}
\item \label{item:H:gen:od} $\mathrm{h}(u)=O(u)$ as $u\to0$.
\item \label{item:22or23-barphi-V:gen:od} If $C=0$, then, for all $\theta\in\Theta$,
 $G^\theta\lnp \bar{\phi} \lesssim \VX$. Otherwise, for all $\theta\in\Theta$,
 $G^\theta\bar{\phi}\lesssim \VX$.
\end{enumerate}
\end{hyp}
\begin{remark}
  \label{rem:linear-case:conv:assump}
  If we consider a VLODM as in \autoref{def:obs-driv-gen},
  Condition~\ref{assum:continuity:Y:phi-theta:gen:od} is obvious and
  \ref{assum:practical-cond:O:gen:od}~\ref{item:barphi:bar:phi:gen:od} reduces
  to impose that $\bar\phi(y)\geq A+B\,|\Upsilon(y)|$ for some non-negative
  constants $A$ and $B$ only depending on $\initmlex_1$, on (the compact
  set) $\Theta$ and on the choice of the norm $|\cdot|$ on $\Uset=\rset^{q'}$.
\end{remark}
\begin{remark}
  \label{rem:discret-case:conv:assump}
In the case where the observations are discrete, one usually take $\nu$ to be
the counting measure on the at most countable space $\Yset$. In this case,
$g^\theta(x;y)\in[0,1]$ for all $\theta,x$ and $y$ and
Condition~\ref{assum:practical-cond:O:gen:od}\ref{assum:momentCond:gen:od}
trivially holds whatever $\Xset_1$ is.
\end{remark}
We have the following result, whose proof is postponed
to  \autoref{sec:proof-thm:conv}.
\begin{theorem}\label{thm:convergence-main:gen:od}
    Consider an ODM$(p,q)$ for some $p,q\geq1$
  satisfying~\ref{assum:bound:rho:gen}.
  Assume that~\ref{assum:gen:identif:unique:pi:gen},
  \ref{ass:21-lyapunov:gen:od}, \ref{assum:continuity:Y:g-theta:gen:od},
  \ref{assum:continuity:Y:phi-theta:gen:od} and
  \ref{assum:practical-cond:O:gen:od} hold.  Then the MLE
  $\mlY{\initmle,n}$ defined by ~(\ref{eq:defi:mle-od:gen}) is
  equivalence-class consistent, that is, the
  convergence~\eqref{eq:equi:class:consistency:od} holds for any $\thv\in\Theta$.
\end{theorem}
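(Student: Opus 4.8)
The plan is to reduce the statement to the already-established first-order theory via the embedding of \autoref{sec:embedding-into-an11}, exploiting that none of the objects defining the MLE is altered by this embedding. With the state $Z_k$ of~(\ref{eq:Zk:def}) playing the role of the hidden variable of an ODM(1,1) on $(\Zset,\Zsigma)$, the observation kernel becomes $\bar G^\theta(z;\cdot)=G^\theta(\Projarg pz;\cdot)$ and the reduced link function $\tilde\psi^\theta$ induces a map $\bar\psi^\theta_{u}$ on $\Zset$ which updates $Z_k$ into $Z_{k+1}$ using the new reduced observation $u=\Upsilon(Y_k)$. In particular $\f{\chunk Y0{(k-1)}}(\initmle)=\Projarg p{Z_k}$ is the $p$-th coordinate of the $k$-step iterate of $\bar\psi^\theta$ started at $Z_0=\initmle$, so that $\mathsf{L}_{\initmle,n}^\theta$ in~(\ref{eq:defi:lkdM:gen:od}) coincides verbatim with the normalized log-likelihood of the embedded ODM(1,1). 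Since the embedding leaves the law of $\nsequence{Y_k}{}$ unchanged, the distributions $\tilde\PP^\theta$, the relation $\sim$, and hence the classes $[\thv]$ are the same for both models; it therefore suffices to prove equivalence-class consistency for the embedded first-order model. Note also that \ref{assum:gen:identif:unique:pi:gen} and \ref{ass:21-lyapunov:gen:od} are assumed, so Step~\ref{item:ergo:step} is granted and only \ref{item:likelihood:step}--\ref{item:maximizing:set:equivalence:step} remain.

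With this reduction in hand I would apply \cite[Theorem~1]{douc2015handy} to carry out \ref{item:likelihood:step}--\ref{item:mle:conv:step} and \cite[Theorem~3]{doumonrou2016maximizing} to obtain \ref{item:maximizing:set:equivalence:step}, both written for ODMs(1,1). The work thus amounts to checking that the hypotheses of those theorems hold for the embedded model, which I would do by translating each standing assumption through the embedding. Concretely, \ref{assum:gen:identif:unique:pi:gen} and \ref{ass:21-lyapunov:gen:od} supply the stationary ergodic law $\tilde\PP^\thv$ together with the moment control $\piX^\theta(\VX)<\infty$, which lifts to the drift function $V$ of~(\ref{eq:VVX:def}) on $\Zset$; \ref{assum:continuity:Y:g-theta:gen:od} and \ref{assum:continuity:Y:phi-theta:gen:od} give joint continuity in $(\theta,z)$ of $z\mapsto\bar g^\theta(z;y)$ and of the embedded link; and the six items of \ref{assum:practical-cond:O:gen:od} are exactly the local boundedness, log-Lipschitz and moment conditions required of the embedded observation kernel and link, with $\Xset_1$ replaced by the corresponding subset of $\Zset$ and $\bar\phi$ providing the envelope controlling the initialization error in $V$-norm. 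The degeneracy of $\bar G^\theta$, which reads $z$ only through $\Projarg pz$, is harmless because $\Zmet$ of~(\ref{eq:def:Zmet}) dominates $\Xmet\circ\Proj_p^{\otimes2}$, so the log-Lipschitz bound~(\ref{eq:ineq:loggg:gen:od}) transfers directly.

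The one point that does not transfer mechanically, and which I expect to be the crux, is the contraction hypothesis. The embedded single-step link $\bar\psi^\theta$ need not be a contraction: its one-step Lipschitz constant is $\mathrm{Lip}_1^\theta$, which~\ref{assum:bound:rho:gen} only requires to be finite. What~\ref{assum:bound:rho:gen} does furnish is that the Lipschitz constants $\mathrm{Lip}_n^\theta$ of the iterates~(\ref{eq:lkd:psi:n:def:rec:withu}) tend to $0$, i.e. an \emph{eventual} uniform contraction with respect to $\Zmet$, rather than a genuine one-step contraction. I would therefore verify that the arguments behind \cite[Theorem~1]{douc2015handy} use only the convergence $\mathrm{Lip}_n^\theta\to0$ of the iterates: this is precisely what makes the initial-condition effect wash out and guarantees that the limit defining $p^\theta(y\mid\chunk y{-\infty}0)$ in~(\ref{eq:def-p-theta-neq-thv:gen:od}) exists and is finite $\tilde\PP^\thv$-a.s. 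Once this is granted the cited theorems apply to the embedded model with no further change.

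Finally, I would assemble the pieces: \ref{item:likelihood:step} and \ref{item:likelihood:conv:step} yield $\lim_n\mathsf{L}_{\initmle,n}^\theta=\tilde\PE^\thv\!\left[\ln p^\theta(Y_1\mid\chunk Y{-\infty}0)\right]$ $\tilde\PP^\thv$-a.s.; \ref{item:mle:conv:step} gives the convergence~(\ref{eq:strong-consistency-prelim:gen:od}) towards the maximizing set $\Theta_\star$ of~(\ref{eq:def-Theta-star-set:gen:od}); and \cite[Theorem~3]{doumonrou2016maximizing}, applied through \ref{item:maximizing:set:equivalence:step}, identifies $\Theta_\star=[\thv]$. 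This is exactly the equivalence-class consistency~(\ref{eq:equi:class:consistency:od}) claimed, for every $\thv\in\Theta$.
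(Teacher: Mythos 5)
Your proposal follows essentially the same route as the paper: embed the ODM$(p,q)$ into an ODM$(1,1)$ on $(\Zset,\Zsigma)$ as in \autoref{sec:embedding-into-an11}, observe that the normalized log-likelihood and the classes $[\thv]$ are unchanged, then check the hypotheses of \cite[Theorem~1]{douc2015handy} for Steps~\ref{item:likelihood:step}--\ref{item:mle:conv:step} and invoke \cite[Theorem~3]{doumonrou2016maximizing} for Step~\ref{item:maximizing:set:equivalence:step}; the translation of \ref{assum:continuity:Y:g-theta:gen:od}, \ref{assum:continuity:Y:phi-theta:gen:od} and the six items of \ref{assum:practical-cond:O:gen:od} (with $\Xset_1$ replaced by $\Zset_1=\Xset^{p-1}\times\Xset_1\times\Yset^{q-1}$) matches the paper's verification of (B-2)--(B-4).

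The one place where you stop short is exactly the point you flagged as the crux, and your plan there is both incomplete and unnecessarily invasive. You propose to \emph{re-open the proof} of \cite[Theorem~1]{douc2015handy} and verify that it uses only $\mathrm{Lip}_n^\theta\to0$; as written this is a plan, not an argument, and it is not how the gap is closed. The cited theorem's contraction hypothesis (B-4)(iii) is a geometric one, and it is satisfied \emph{as stated} by the embedded model: since the iterated Lipschitz constants are submultiplicative, $\mathrm{Lip}_n^\theta\to0$ in \ref{assum:bound:rho:gen} automatically upgrades to geometric decay $\mathrm{Lip}_n^\theta\leq C\rho^n$ for some $\rho\in(0,1)$ --- this is \autoref{lem:Lip:XY-cond:geom} in the paper --- and combining it with the bound~(\ref{eq:lipschitz:Psi}) on the iterates of the embedded link yields (B-4)(iii) with $\bar\psi(z)=C\,\Zmet(\initmle,z)$, which is locally bounded (giving (B-4)(iv)) and, because $\initmle$ has constant first $p$ and last $q-1$ entries, is controlled after one step by the envelope $\bar\phi$ of \ref{assum:practical-cond:O:gen:od}\ref{item:barphi:bar:phi:gen:od} (giving (B-4)(v)). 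With this standard fact in hand, no inspection of the internals of the first-order theorem is needed, and the rest of your assembly --- (A-1), (A-2) from \ref{assum:gen:identif:unique:pi:gen} and \ref{ass:21-lyapunov:gen:od} (the paper uses $\bar V(x)=\max_{1\leq k\leq p}\VX(x_k)$ rather than the full $V$ of~(\ref{eq:VVX:def}), an inessential difference), then $\Theta_\star=[\thv]$ via \cite[Theorem~3]{doumonrou2016maximizing} --- goes through exactly as you describe.
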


\section{Postponed Proofs}\label{sec:append:proofs:gen:od}

\subsection{Embedding into an observation-driven model of order $(1,1)$}
\label{sec:embedding-into-an11}

A simplifying and unifying step is to embed the general order case into the
order $(1,1)$ by augmenting the state space. Consider an ODM as
in \autoref{def:obs-driv-gen}.  For all $u\in\Uset$ and $y\in\Yset$, we
denote by $\tilde\Psi^\theta_u$ and  $\tilde\Psi^\theta_y$ the two
$\Zset\to\Zset$ mappings defined by
\begin{align}\label{eq:def:tpsi:gen:od}
&  \tilde{\Psi}^\theta_{u}:z=\chunk z1{(p+q-1)}\mapsto
  \begin{cases}
  \left(\chunk z2p,
  \tf{u}(z), \chunk z{(p+2)}{(p+q-1)},
  u\right)&\text{ if $q>1$}\\
  \left(\chunk z2p,
  \tf{u}(z)\right)&\text{ if $q=1$}\;,
  \end{cases}\\
\label{eq:def:psi:gen:od}
&  \Psi^\theta_{y}=  \tilde{\Psi}^\theta_{\Upsilon(y)}\;.
\end{align}
We further denote the successive composition of  $\Psi^\theta_{y_0}$,
$\Psi^\theta_{y_1}$, ..., and $\Psi^\theta_{y_k}$ by
\begin{equation}
\label{eq:notationItere:f:cl:gen}
\F{\chunk{y}{0}{k}}=\Psi^\theta_{y_k} \circ \Psi^\theta_{y_{k-1}} \circ \dots \circ \Psi^\theta_{y_0}\,.
\end{equation}
Note in particular that
$\f{\chunk{y}{0}{k}}$ defined by~(\ref{eq:lkd:psi:n:def:rec}) with the
recursion~(\ref{eq:pq-order-rec-equation-with-u}) can
be written as
\begin{equation}
\label{eq:psi:Psi:relation}
\f{\chunk{y}{0}{k}}=\Proj p\circ\F{\chunk{y}{0}{k}}\;,
\end{equation}
Conversely, we have, for
all $k\geq0$ and $\chunk{y}{0}{k}\in\Yset^{k+1}$,
\begin{equation}
  \label{eq:psi:Psi:relation:recip}
  \F{\chunk{y}{0}{k}}(z)=\left(\setvect{\f{\chunk{y}{0}{j}}(z)}{k-p<j\leq k},\chunk
  u{(k-q+2)}{k}\right)\;,
\end{equation}
where we set $u_j=\Projarg {p+q+j}z$ for $-q< j\leq -1$ and $u_j=\Upsilon(y_j)$
for $0\leq j \leq k$ and use the convention
$\f{\chunk{y}{0}{j}}(z)=\Projarg {p-j}z$ for $-p< j\leq 0$.  By letting
$Z_k=(\chunk{X}{(k-p+1)}{k},\chunk{U}{(k-q+1)}{(k-1)})$ (see
\eqref{eq:Zk:def}), Model~\eqref{eq:def:gen-ob} can be rewritten as, for all
$k\in\zsetp$,
\begin{equation}\label{eq:def:ob:standard}
\begin{split}
&Y_{k}\mid \mathcal{F}_k \sim G^\theta(\Projarg{p}{Z_{k}};\cdot)\eqsp,\\
&Z_{k+1}=\Psi^\theta_{Y_k}(Z_k)\eqsp.
\end{split}
\end{equation}
By this representation, the ODM$(p,q)$ is thus embedded in an
ODM$(1,1)$. This in principle allows us to apply the same results
obtained for the class of ODMs$(1,1)$ in \cite{douc2015handy} to the
broader class of ODMs$(p,q)$. Not all but some conditions written
above for an ODM$(p,q)$ indeed easily translate to the embedded
ODM$(1,1)$. Take for instance \ref{assum:bound:rho:gen}.
By~(\ref{eq:Lip:constant:n:def}),~(\ref{eq:def:Zmet})
and~(\ref{eq:psi:Psi:relation:recip}), we have, for all
$n\in\zsetpnz$, using the convention $\mathrm{Lip}^\theta_{m}=1$ for
$m\leq0$,
   \begin{equation}
    \label{eq:lipschitz:Psi}
  \sup_{y\in\Yset^{n},z,z'\in\Zset^2}\frac{\Zmet\left(\F{y}(z),\F{y}(z')\right)}{\Zmet(v)}\leq
\1_{\{n< q\}}\vee\left(  \max_{0\leq j< p}\mathrm{Lip}_{n-j}^\theta\right)\;.
\end{equation}
Hence the same assumption \ref{assum:bound:rho:gen} will hold for the embedded ODM$(1,1)$.
As an ODM$(1,1)$, the bivariate process
$\dsequence{Z}{Y}[k][\zsetp]$ is a Markov chain on the space
$(\Zset\times\Yset,\Zsigma\otimes\Ysigma)$ with transition kernel $K^\theta$
satisfying, for all $(z,y)\in\Zset\times\Yset$, $A\in\Zsigma$ and
$B\in\Ysigma$,
\begin{equation}\label{eq:def:kernel:K:gen}
K^\theta((z,y); A \times B)=\int \1_{A \times B}(\Psi_y^\theta(z),y') \;
G^\theta(\Projarg{p}z; \rmd y')\eqsp.
\end{equation}
\begin{remark}
Note that \ref{assum:gen:identif:unique:pi:gen} is equivalent to saying that the
transition kernel $K^\theta$ of the complete chain admits a unique invariant
probability measure $\pi^\theta$ on $\Zset\times\Yset$. Moreover the resulting
$\piX^\theta$ and $\piY^\theta$ can be obtained by projecting  $\pi^\theta$ on
any of its $\Xset$  component and any of its  $\Yset$ component, respectively.
\end{remark}
Note also that, by itself, the process $\{Z_k\,:\, k\in\zsetp\}$ is a Markov
chain on $(\Zset,\Zsigma)$ with transition kernel $R^\theta$ defined by setting, for
all $z\in\Zset$ and $A\in\Zsigma$,
\begin{equation}\label{eq:def:kernel:R:gen}
  R^\theta(z;A)
  =\int \1_A(\Psi^\theta_{y}(z))G^\theta(\Projarg{p}{z};\rmd y)\eqsp.
\end{equation}

\subsection{Proof of \autoref{thm:ergodicity:gen}}
\label{sec:ergodicity-proofs-gen:od}
The scheme of proof of this theorem is somewhat similar to that of
\cite[Theorem~2]{douc2015handy} which is dedicated to the ergodicity of
ODM(1,1) processes. The main difference is that we need to rely on
assumptions involving \emph{iterates} of kernels such
as~\ref{assum:weakFeller-V:gen},~\ref{assum:bound:rho:gen}
and~\ref{assum:alpha-phi:gen}\ref{assum:driftCond:W:new:gen:od} below, to be compared
with their counterparts (A-4), (A-7) and (A-8)(iv) in
\cite[Theorem~2]{douc2015handy}). Using the embedding of
\autoref{sec:embedding-into-an11}, we will use the following conditions
directly applying to the kernel $R$ of this embedding.
\begin{hyp}{E}
\item \label{assum:Ztopology} The space $(\Zset,\Zmet)$ is a locally
  compact and complete separable metric space.
\item \label{assum:weakFeller-V:gen} There exists a positive integer $m$ such
  that the Markov kernel $R^m$ is weak Feller. Moreover, there exist
  $(\lambda,\beta) \in (0,1) \times \rsetp$ and a measurable function
  $V: \Zset \to \rsetp$ such that $R^m V \leq \lambda V+\beta$ and $\{V\le M\}$
  is a compact set for any $M>0$.
\item \label{assum:reachable:gen}
The Markov kernel $R$ admits  a reachable point, that is, there exists
$z_\infty\in\Zset$ such that, for any $z\in\Zset$ and any neighborhood $\mathcal{N}$ of $z_\infty$,
$R^m(z,\mathcal{N})>0$ for at least one positive integer $m$.
\item \label{assum:asympStrgFeller:general:gen:od} There exists a Markov kernel
  $\bar R$ on $(\Zset^2\times\{0,1\},\Zsigma^{\otimes2}\otimes\mathcal{P}(\{0,1\}))$,
a Markov kernel $\hat R$ on
$(\Zset^2,\Zsigma^{\otimes2})$, measurable
  functions  $\bar\alpha:\Zset^2\to[0,1]$ and  $W: \Zset^2 \to [1,\infty)$ symmetric, and real
  numbers $(D,\zeta_1,\zeta_2,\rho) \in (\rset_+)^3 \times (0,1)$ such that for
  all $v=(z,z',u) \in \Xset^2\times\{0,1\}$ and $n\geq1$,
  \begin{align}
    \label{eq:assum:definition-gamma-x:gen:bar}
 &   1-\bar\alpha\leq\Zmet \times W\quad\text{on $\Zset^2$}\;,\\
        \label{eq:assum:hyp:1-alpha:W:gen:bar}
&\forall
    z\in\Zset,\;\exists\gamma>0,\;\sup\set{W(z,z')}{\Zmet(z,z')<\gamma}<\infty\;,\\
    \label{eq:majo:d:gen:od:coupling:theirEq3}
    &\begin{cases}
      \bar R(v;\cdot\times\Zset\times\{0,1\})=R(z,\cdot)\quad\text{and}\\
      \bar R(v;\Zset\times\cdot\times\{0,1\})=R(z',\cdot) \;,
    \end{cases}
    \\
    \label{eq:majo:d:gen:od:coupling:proba:theirEq5}
&\bar R(v;\cdot\times\{1\})=\bar\alpha(z,z')\,\hat R((z,z'),\cdot)\;,    \\
    \label{eq:majo:d:gen:od}
& {\hat R}^n((z,z'); \Zmet) \leq D \rho^n
  \Zmet(z,z') \;,\\
   \label{eq:majo:d:w:gen:od}
& {\hat R}^n((z,z'); \Zmet\times W) \leq D \rho^n
  \Zmet^{\zeta_1}(z,z')\ W^{\zeta_2}(z,z')\;.
\end{align}
\end{hyp}
Based on these conditions, we can rely on the two following results.
The existence of an invariant probability measure for $R$ is given by the following result.
\begin{lemma}\label{lem:existence:inv:R}
  Under~\ref{assum:Ztopology}  and \ref{assum:weakFeller-V:gen}, $R$
  admits an invariant distribution $\pi$; moreover, $\pi V<\infty$.
\end{lemma}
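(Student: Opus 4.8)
The plan is to run the Krylov--Bogolyubov construction on the iterated kernel $R^m$, which is the kernel that \ref{assum:weakFeller-V:gen} makes weak Feller, to produce an $R^m$-invariant probability measure with finite $V$-moment, and then to symmetrize over one period to obtain an $R$-invariant measure.

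First I would fix any $z_0\in\Zset$ and form the occupation measures $\mu_n=\frac1n\sum_{k=0}^{n-1}(R^m)^k(z_0;\cdot)$. Iterating the drift inequality $R^mV\le\lambda V+\beta$ gives $(R^m)^kV\le\lambda^kV+\beta/(1-\lambda)$, whence $\mu_n(V)\le \frac{V(z_0)}{n(1-\lambda)}+\frac{\beta}{1-\lambda}$ is bounded uniformly in $n$. Since $\{V\le M\}$ is compact for every $M$ by \ref{assum:weakFeller-V:gen}, Markov's inequality $\mu_n(\{V>M\})\le \mu_n(V)/M$ shows that $(\mu_n)_n$ is tight, so by Prokhorov's theorem (using that $\Zset$ is Polish, \ref{assum:Ztopology}) some subsequence converges weakly to a probability measure $\mu$. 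The standard estimate $|\mu_n(R^mf)-\mu_n(f)|\le 2\lVert f\rVert_\infty/n$ for bounded continuous $f$, combined with the weak Feller property of $R^m$ (so that $R^mf$ is again bounded continuous), then passes to the limit and yields $\mu R^m=\mu$.

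Next I would set $\pi=\frac1m\sum_{i=0}^{m-1}\mu R^i$ and check directly that $\pi R=\frac1m\sum_{i=1}^{m}\mu R^i=\pi$, the two boundary terms matching because $\mu R^m=\mu$; thus $\pi$ is an $R$-invariant probability measure.

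The delicate point is the moment bound $\pi V<\infty$: one cannot simply integrate the drift against $\pi$ and cancel, since $\pi V$ is not yet known to be finite, and only $R^m$ (not $R$) carries both the drift and the Feller property, so the intermediate iterates $R^iV$ need not be controlled pointwise. The key observation I would use is that $\mu R^m=\mu$ forces every shifted measure $\nu_i:=\mu R^i$ to be $R^m$-invariant as well, because $(\mu R^i)R^m=\mu R^mR^i=\mu R^i$. For each such $R^m$-invariant $\nu_i$ I would then run a comparison/truncation argument for the $R^m$-chain: rewriting the drift as $R^mV\le V-(1-\lambda)V+\beta$, truncating $V$ at level $N$ (which is legitimate because $V$ is everywhere finite, being $\rsetp$-valued) to obtain a bounded Lyapunov function for which integrate-and-cancel is valid, and letting $N\to\infty$ by monotone convergence gives $\nu_i(V)\le\beta/(1-\lambda)$. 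Summing over $i$ yields $\pi V=\frac1m\sum_{i=0}^{m-1}\mu R^i(V)\le\beta/(1-\lambda)<\infty$, completing the proof. I expect this last moment transfer to be the main obstacle; routing the bound through the $R^m$-invariance of each $\mu R^i$, rather than trying to bound $R^iV$ directly, is what makes it go through.
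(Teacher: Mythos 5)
Your proof has the same skeleton as the paper's: produce an invariant probability for $R^m$, average it over one period to pass to $R$-invariance, and obtain the moment bound from the drift by truncation. Where you rerun the Krylov--Bogolyubov construction, the paper simply invokes Tweedie's theorem (\emph{Invariant measures for Markov chains with no irreducibility assumptions}, 1988, Theorem~2), which is precisely the statement that the weak Feller property of $R^m$ together with the drift and the compactness of $\{V\le M\}$ in \ref{assum:weakFeller-V:gen} yields an $R^m$-invariant probability; your tightness estimate $\mu_n(V)\le V(z_0)/(n(1-\lambda))+\beta/(1-\lambda)$, the use of \ref{assum:Ztopology} for Prokhorov, and the $2\lVert f\rVert_\infty/n$ passage to the limit are all correct, so this is a legitimate self-contained substitute. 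The Cesàro averaging step is identical to the paper's.

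The one step you must tighten is the moment transfer, where a single ``integrate-and-cancel'' on the truncated function does not go through as described. From $\mu(V\wedge N)=\mu R^m(V\wedge N)\le\mu\bigl((\lambda V+\beta)\wedge N\bigr)$, cancellation would require the pointwise bound $(\lambda V+\beta)\wedge N\le\lambda(V\wedge N)+\beta$, which fails on $\{V>N\}$: take $V=2N$ and $\lambda=1/2$, so the left side equals $N$ while the right side is $N/2+\beta$. The salvageable one-step variants are circular (e.g.\ they produce an error term $N\,\mu(V>N)$, and $N\1_{\{V>N\}}\le V\wedge N$ gives back only a vacuous inequality). The repair is to iterate the drift \emph{before} truncating, which is exactly the paper's mechanism: since the invariant measure is also $R^{nm}$-invariant for every $n$, one has $\mu(V\wedge N)=\mu R^{nm}(V\wedge N)\le\mu\bigl((\lambda^n V+\beta/(1-\lambda))\wedge N\bigr)$, and letting $n\to\infty$ by dominated convergence (the integrand is bounded by $N$ and $\lambda^n V\to0$ pointwise because $V$ is finite-valued) yields $\mu(V\wedge N)\le\beta/(1-\lambda)$, after which $N\to\infty$ by monotone convergence concludes. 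Finally, your detour through the shifted measures $\mu R^i$ is correct --- indeed $(\mu R^i)R^m=\mu R^m R^i=\mu R^i$ --- but unnecessary: the averaged measure $\pi$ is itself $R$-invariant, hence $R^{nm}$-invariant for all $n$, and the paper applies the truncated-iterate argument to $\pi$ directly.
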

\begin{proof}
  By \cite[Theorem~2]{tweedie:1988}, Assumption
  \ref{assum:weakFeller-V:gen} implies that the transition kernel $R^m$ admits an
  invariant probability distribution denoted hereafter by $\pi_m$. Let
  $\tilde\pi$ be defined by, for all $A\in\Zsigma$,
\begin{equation*}
\tilde\pi(A)=\frac{1}{m}\sum_{k=1}^m\pi_mR^k(A)\eqsp.
\end{equation*}
Obviously, we have $\tilde\pi R=\tilde\pi$, which shows that $R$ admits an invariant probability distribution $\tilde\pi$. Now let $M>0$. Then by Jensen's inequality, we have for all $n\in\zsetp$,
\begin{align*}
\tilde\pi  (V\wedge M)&=\tilde\pi R^{nm} (V\wedge M) \leq \tilde\pi ((R^{nm} V)\wedge M)  \\
&\le\lambda^n\tilde\pi(V\wedge M)+\frac{\beta}{1-\lambda}\wedge M\eqsp.
\end{align*}
Letting $n\to\infty$, we then obtain
$\tilde\pi  (V\wedge M)\le\frac{\beta}{1-\lambda}\wedge M$.
Finally, by the monotone
convergence theorem, letting $M\to\infty$, we get $\tilde\pi V<\infty$.
\end{proof}
\begin{proposition} \label{thm:uniqueInvariantProbaMeasure:gen:od}
  Assume ~\ref{assum:Ztopology} \ref{assum:reachable:gen} and
  \ref{assum:asympStrgFeller:general:gen:od}. Then the Markov kernel $R$ admits
  at most one unique invariant  probability measure.
\end{proposition}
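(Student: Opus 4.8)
The plan is to prove uniqueness by a coupling argument built on the kernels $\bar R$ and $\hat R$ of~\ref{assum:asympStrgFeller:general:gen:od}, using the reachable point $z_\infty$ of~\ref{assum:reachable:gen} to tie the (purely local) contraction provided by $\hat R$ to a global conclusion. First I would record that $z_\infty$ lies in the support of \emph{every} invariant probability measure. Indeed, for $a\in(0,1)$ the resolvent $K_a=(1-a)\sum_{m\ge0}a^mR^m$ is a Markov kernel and any $R$-invariant $\pi$ satisfies $\pi K_a=\pi$; by~\ref{assum:reachable:gen}, $K_a(z;\mathcal N)>0$ for every $z\in\Zset$ and every neighborhood $\mathcal N$ of $z_\infty$, whence $\pi(\mathcal N)=\int K_a(z;\mathcal N)\,\pi(\rmd z)>0$. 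Thus $z_\infty\in\mathrm{supp}(\pi)$, which is the topological ingredient that will let a local estimate propagate.

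Second, and this is the core, I would manufacture a near-diagonal contraction from the coupling. Run the Markov chain $\nsequence{(Z_k,Z'_k,U_k)}{k\ge0}$ with kernel $\bar R$ from $(z,z',0)$; by~\eqref{eq:majo:d:gen:od:coupling:theirEq3} its first two marginals are the $R$-chains issued from $z$ and $z'$. Let $E_n=\{U_1=\dots=U_n=1\}$. By~\eqref{eq:majo:d:gen:od:coupling:proba:theirEq5}, on $E_n$ the pair $(Z_k,Z'_k)$ evolves exactly under $\hat R$, each transition carrying a factor $\bar\alpha\le1$; bounding these by $1$ and invoking~\eqref{eq:majo:d:gen:od} gives $\bar{\mathbb E}_{(z,z')}[\Zmet(Z_n,Z'_n)\,\1_{E_n}]\le\hat R^n((z,z');\Zmet)\le D\rho^n\Zmet(z,z')$. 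For the first failure time $T=\inf\{k\ge1:U_k=0\}$, the same ``follows $\hat R$ before $T$'' reasoning together with~\eqref{eq:assum:definition-gamma-x:gen:bar} and~\eqref{eq:majo:d:w:gen:od} yields
\[
\bar{\mathbb P}_{(z,z')}(T<\infty)\le\sum_{k\ge1}\hat R^{k-1}\big((z,z');\Zmet\times W\big)\le\frac{D}{1-\rho}\,\Zmet^{\zeta_1}(z,z')\,W^{\zeta_2}(z,z')\;.
\]
Combining the two estimates, for every bounded $1$-Lipschitz $f$,
\[
\limsup_{n\to\infty}\left|R^nf(z)-R^nf(z')\right|\le\frac{2D}{1-\rho}\,\Zmet^{\zeta_1}(z,z')\,W^{\zeta_2}(z,z')\;.
\]
By the local boundedness of $W$ in~\eqref{eq:assum:hyp:1-alpha:W:gen:bar} and continuity of $\Zmet$, the right-hand side tends to $0$ as $z,z'\to z_\infty$; so for every $\varepsilon>0$ there is a neighborhood $\mathcal N_\varepsilon$ of $z_\infty$ on which $\limsup_n|R^nf(z)-R^nf(z')|\le\varepsilon$ uniformly over bounded $1$-Lipschitz $f$. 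This is an asymptotic-strong-Feller-type estimate localized at $z_\infty$.

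Third, I would globalize. If $R$ had two distinct invariant probability measures then, by the ergodic decomposition on the Polish space $\Zset$, there would be two distinct ergodic ones $\pi_1\ne\pi_2$, necessarily mutually singular, both charging every neighborhood of $z_\infty$ by the first step. Feeding this positivity near $z_\infty$ into the localized estimate of the second step, together with Birkhoff's theorem (the Ces\`aro averages of $R^kf$ converge to $\pi_i f$ from $\pi_i$-a.e.\ start), forces $\pi_1 f=\pi_2 f$ for all bounded Lipschitz $f$ and hence $\pi_1=\pi_2$, a contradiction. This final implication is exactly the now-standard ``asymptotic strong Feller $+$ common support point $\Rightarrow$ uniqueness'' mechanism and can be carried out as in the scheme of \cite[proof of Theorem~2]{douc2015handy}.

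I expect the genuine obstacle to be precisely this last globalization: the contraction produced by the coupling is effective only near the diagonal, where $\Zmet^{\zeta_1}W^{\zeta_2}$ is small, since $W$ may blow up for distant pairs, so one cannot simply integrate the $\limsup$ estimate against $\pi_1\otimes\pi_2$. The reachable point $z_\infty$, shown to sit in the support of every invariant measure, is what bridges the local estimate and the global statement. The subsidiary technical point is the control of the $W$-moment along the successful-coupling excursion, which is exactly what the second contraction bound~\eqref{eq:majo:d:w:gen:od} and the local boundedness~\eqref{eq:assum:hyp:1-alpha:W:gen:bar} are there to supply.
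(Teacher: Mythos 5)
Your proposal is correct and follows essentially the same route as the paper: the paper's proof simply invokes the uniqueness part of the proof of Theorem~6 in \cite{dou:kou:mou:2013} (their (A2) and (A3) being our~\ref{assum:reachable:gen} and~\ref{assum:asympStrgFeller:general:gen:od}), and that argument is precisely your coupling scheme---contraction under $\hat R$ while the coupling succeeds, failure probability controlled via $1-\bar\alpha\leq\Zmet\times W$ together with~(\ref{eq:majo:d:w:gen:od}), and globalization through the reachable point lying in the support of every invariant measure. The only cosmetic point is that the $k=1$ term of your series for $\bar{\mathbb{P}}_{(z,z')}(T<\infty)$ should be bounded directly by $\Zmet(z,z')\,W(z,z')$, since~(\ref{eq:majo:d:w:gen:od}) is stated only for $n\geq1$; this changes nothing in the conclusion.
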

\begin{proof}
  This is extracted from the uniqueness part of the proof
  of~\cite[Theorem~6]{dou:kou:mou:2013}, see their Section~3. Note that our
  Condition~\ref{assum:reachable:gen} corresponds to their condition (A2) and
  our Condition~\ref{assum:asympStrgFeller:general:gen:od} to their condition
  (A3) (their $\alpha$, $Q$, $\bar Q$ and $Q^\sharp$ being our $\bar\alpha$,
  $R$, $\bar R$ and $\hat{R}$).
\end{proof}
Hence it is now clear that the conclusion of \autoref{thm:ergodicity:gen} holds
if we can apply both~\autoref{lem:existence:inv:R} and~\autoref{thm:uniqueInvariantProbaMeasure:gen:od}.
This is done according to the following successive steps.
\begin{enumerate}[label={\bf Step~\arabic*}]
\item  Under \ref{item:CCMShyp}, the
  metric~(\ref{eq:def:Zmet}) makes $(\Zset, \Zmet)$ locally compact, complete
  and separable, hence~\ref{assum:Ztopology} holds true.
\item\label{item:zero-step-ergo} Prove \ref{assum:weakFeller-V:gen}: this is
  done in \autoref{lem:lyapu:embedding} using~\ref{item:CCMShyp}, \ref{assum:weak:feller:X}, \ref{assum:V:gen:X}
  and the fact that, for all $y\in\Yset$, $\f[]{y}$ is continuous on $\Zset$,
  as a consequence of $\mathrm{Lip}_1<\infty$ in  \ref{assum:bound:rho:gen}.
\item\label{item:first-step-ergo} Prove \ref{assum:reachable:gen}: this is done in \autoref{lem:reaching},
  using \ref{item:CCMShyp}, \ref{assum:bound:rho:gen} and~\ref{assum:reaching:mild}.
\item\label{item:alphabar-step-ergo} Define $\bar\alpha$ and
  prove~(\ref{eq:assum:definition-gamma-x:gen:bar})
  and~(\ref{eq:assum:hyp:1-alpha:W:gen:bar}) in~\ref{assum:asympStrgFeller:general:gen:od} with $W$ as
  in~(\ref{eq:Wbar:def}): this directly follows from
  Conditions~\ref{assum:alpha-phi:gen}\ref{assum:definition-gamma-x:gen}
  and\ref{assum:alpha-phi:gen}\ref{assum:hyp:1-alpha:W:gen}.
\item\label{item:second-step-ergo} Provide an explicit construction for
  $\bar R$ and $\hat R$ satisfying~(\ref{eq:majo:d:gen:od:coupling:theirEq3})
  and~(\ref{eq:majo:d:gen:od:coupling:proba:theirEq5})
  in~\ref{assum:asympStrgFeller:general:gen:od}: this is done in
  \autoref{lem:coupling:con:gen:od} using
  \ref{assum:alpha-phi:gen}\ref{item:alpha-phi:3:gen};
\item\label{item:final-step-ergo} Finally, we need to establish the additional
  properties of this $\hat R$ required in~\ref{assum:asympStrgFeller:general:gen:od},
  namely,~(\ref{eq:majo:d:gen:od}) and~(\ref{eq:majo:d:w:gen:od}). This will be
  done in the final part of this section using the additional
  \autoref{lem:practical:conditions:gen:od}.
\end{enumerate}
Let us start with~\ref{item:zero-step-ergo}.
\begin{lemma}
  \label{lem:lyapu:embedding}
If for all $y\in\Yset$, $\f[]{y}$ is continuous on $\Zset$, then \ref{assum:weak:feller:X} and   \ref{assum:V:gen:X} imply~\ref{assum:weakFeller-V:gen}.
\end{lemma}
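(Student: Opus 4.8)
The plan is to verify the two clauses of~\ref{assum:weakFeller-V:gen} for the transition kernel $R$ of the embedded ODM$(1,1)$ given in~(\ref{eq:def:kernel:R:gen}): weak Feller of some iterate $R^m$, and a geometric drift $R^mV\le\lambda V+\beta$ toward compact sublevel sets, taking as candidate Lyapunov function the map $V$ of~(\ref{eq:VVX:def}). Since a composition of weak Feller kernels is again weak Feller, it suffices to prove that $R$ itself is weak Feller and only afterwards fix the power $m$ through the drift estimate.

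For the weak Feller property, fix a bounded continuous $f$ on $\Zset$ and a sequence $z_n\to z_0$. Writing $Rf(z)=\int f(\Psi_y(z))\,G(\Projarg pz;\rmd y)$ and adding and subtracting $\int f(\Psi_y(z_0))\,G(\Projarg p{z_n};\rmd y)$, I would split $Rf(z_n)-Rf(z_0)$ into
\[
\int\lrb{f(\Psi_y(z_n))-f(\Psi_y(z_0))}\,G(\Projarg p{z_n};\rmd y)+\int f(\Psi_y(z_0))\,\lrb{G(\Projarg p{z_n};\rmd y)-G(\Projarg p{z_0};\rmd y)}.
\]
The first integral tends to $0$ by dominated convergence: for fixed $y$ the map $z\mapsto\Psi_y(z)$ is continuous (this is exactly where the standing hypothesis that $\f[]{y}$ is continuous on $\Zset$ enters, the remaining coordinates of $\Psi_y$ being projections and $\Upsilon(y)$), so the integrand vanishes pointwise, while for $n$ large its modulus is dominated by $2\norm{f}_\infty\,\sup\set{g(x';y)}{\Xmet(x',\Projarg p{z_0})<\delta}$, which is $\nu$-integrable by~(\ref{eq:uniform-dom-conv}). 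The second integral tends to $0$ by the Feller property of $G$ and continuity of $\Projarg p\cdot$: under~\ref{assum:feller:atom} the function $y\mapsto f(\Psi_y(z_0))$ is bounded measurable, so strong Feller makes $x\mapsto\int f(\Psi_y(z_0))\,G(x;\rmd y)$ continuous, evaluated at $\Projarg p{z_n}\to\Projarg p{z_0}$; under~\ref{assum:feller:cont} the same function factors as a continuous function of $u=\Upsilon(y)$ (using that $u\mapsto\tf[]{u}(z_0)$ is continuous), so the weak convergence $G(\Projarg p{z_n};\cdot)\Rightarrow G(\Projarg p{z_0};\cdot)$, transported to the reduced observation, gives the conclusion.

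For the drift I take $V$ as in~(\ref{eq:VVX:def}) and bound $R^mV(z)=\PE_z\lrb{V(Z_m)}$. By~(\ref{eq:Zk:def}) the coordinates of $Z_m$ are $X_{m-p+1},\dots,X_m$ and $U_{m-q+1},\dots,U_{m-1}$; bounding the maximum in~(\ref{eq:VVX:def}) by a sum and using that, given $\mathcal{F}_i$, $\PE\lrb{\VU(U_i)\mid\mathcal{F}_i}=G\VY(X_i)\le\vnorm[\VX]{G\VY}\,\VX(X_i)$, I obtain
\[
R^mV(z)\le\sum_{i=m-p+1}^{m}\PE_z\lrb{\VX(X_i)}+\sum_{i=m-q+1}^{m-1}\PE_z\lrb{\VX(X_i)},
\]
a finite sum of $p+q-1$ terms whose indices all exceed $m-\max(p,q)+1$. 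Now I unfold~(\ref{eq:ergoVX}): the quantity $c_n\eqdef\lim_{M\to\infty}\sup_{z\in\Zset}\PE_z[\VX(X_n)]/(M+V(z))$ (well defined since decreasing in $M$) tends to $0$, so for any $\epsilon\in(0,1/(p+q))$ there is $n_0$ with $c_n<\epsilon$ for all $n\ge n_0$, hence constants $M_i$ with $\PE_z[\VX(X_i)]\le\epsilon\,V(z)+\epsilon M_i$ for every $z$ and every $i\ge n_0$. Choosing $m\ge n_0+\max(p,q)-1$ puts all indices in the display above $n_0$, whence $R^mV\le\lambda V+\beta$ with $\lambda=(p+q-1)\epsilon<1$ and $\beta$ a finite constant. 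Finally $\{V\le M\}$ is compact, being closed and contained in a finite product of the compact sublevel sets of $\VX$ and of $\VU$ supplied by~\ref{assum:V:gen:X} (the $\Uset$-factor using the compactness of $\{\VY\le M\}$ assumed there when $q>1$).

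The main obstacle is the drift step: one must extract a genuine contraction $\lambda<1$ from the iterated double limit~(\ref{eq:ergoVX}), which only becomes exploitable after the maximum defining $V$ has been linearised into a finite sum of $\VX$-terms and the observation coordinates $\VU(U_i)$ have been reabsorbed into $\VX(X_i)$ through $G\VY\lesssim\VX$; keeping both index windows $[m-p+1,m]$ and $[m-q+1,m-1]$ simultaneously above the threshold $n_0$ is precisely what forces $m$ to be taken large. A secondary delicate point is case~\ref{assum:feller:cont} of the weak Feller argument, where the Feller property of $G$ must be carried through $\Upsilon$ to the reduced observation before the continuity of $u\mapsto\tf[]{u}(z_0)$ can be invoked.
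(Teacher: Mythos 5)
Your proposal is correct and follows essentially the same route as the paper's proof: the same add-and-subtract decomposition of $Rf(z_n)-Rf(z_0)$, with the first term handled by dominated convergence via~(\ref{eq:uniform-dom-conv}) and the second by the strong/weak Feller alternative for $G$, and the same drift argument bounding the maximum defining $V$ in~(\ref{eq:VVX:def}) by a finite sum of $\PE_z\left[\VX(X_i)\right]$ terms after absorbing the $\VU(U_i)$ coordinates through $G\VY\lesssim\VX$, then extracting a contraction $\lambda<1$ from the double limit~(\ref{eq:ergoVX}) by taking $m$ large enough that all indices clear the threshold. Your explicit check that $\{V\leq M\}$ is compact is a welcome addition that the paper's proof leaves implicit, and otherwise the two arguments differ only in bookkeeping (the paper groups the $p+q-1$ terms as $2(p\vee q)$ with a factor $2$, and phrases the weak Feller step via an auxiliary function $\tilde F(z,x)$ rather than sequences).
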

\begin{proof}
  We first show that $R$ is weak Feller, hence $R^m$ is too, for any
  $m\geq1$. Let $f:\Zset\to\rset$ be continuous and bounded. For all
  $z=(\chunk x{(-p+1)}0,\chunk u{(-q+1)}{(-1)})\in\Zset$, $Rf(z)$ is given by
  \begin{align*}
\PE_z\left[f(Z_1)\right]&=\PE_z\left[f(\chunk x{(-p+2)}0,X_1,\chunk
                          u{(-q+2)}{(-1)},\Upsilon(Y_0))\right]\\
    &=\int f(\chunk x{(-p+2)}0,\f[]{y}(z),\chunk
    u{(-q+2)}{(-1)},\Upsilon(y))\;G(x_0;\rmd y)\;.
  \end{align*}
Let us define $\tilde f:\Zset\times\Yset\to\rset$
by setting, for $y\in\Yset$ and $z$ as above,
$$
\tilde f(z,y)=f(\chunk x{(-p+2)}0,\f[]{y}(z),\chunk
u{(-q+2)}{(-1)},\Upsilon(y))\;.
$$
Further define $\tilde F:\Zset\times\Xset\to\rset$ by setting, for all
$z\in\Zset$ and $x\in\Xset$,
$$
\tilde F(z,x)=\int \tilde f(z,y)\; G(x;\rmd y)\;.
$$
Hence, with these definitions, we have, for all
$z\in\Zset$,
$Rf(z)=\tilde F(z,\Projarg pz)$, and it is now sufficient to show that $\tilde F$ is
continuous. We write, for all $z,z'\in\Zset$ and $x,x'\in\Xset$, $\tilde
F(z',x')-\tilde F(z,x)=A(z,z',x')+B(z,x,x')$ with
\begin{align*}
A(z,z',x')&=\int\left( \tilde f(z',y)-
   \tilde f(z,y)\right)\; G(x';\rmd y)\\
  B(z,x,x')&=\int \tilde
f(z,y)\; \left(G(x';\rmd y)-G(x;\rmd y)\right)
\;.
\end{align*}
Since $z\mapsto\tilde f(z,y)$ is continuous for all $y$, we have
$A(z,z',x')\to0$ as $(z',x')\to(z,x)$ by~(\ref{eq:uniform-dom-conv}) and
dominated convergence. We have $B(z,x,x')\to0$ as $x'\to x$, as a consequence
of~\ref{assum:weak:feller:X}\ref{assum:feller:atom}
or of~\ref{assum:weak:feller:X}\ref{assum:feller:cont}. Hence $\tilde F$ is
continuous and we have proved that $R^m$ is weak Feller for all
$m\in\zsetpnz$.

We now show that we can find $m\in\zsetpnz$, $\lambda\in(0,1)$ and $\beta>0$
such that $R^mV\leq\lambda V+\beta$ with $V:\Zset\to\rsetp$ defined
by~(\ref{eq:VVX:def}). We have, for all $n\geq q$,
\begin{align*}
R^nV(z)&=\PE_z\left[\max_{0\leq k< p}\VX(X_{n-k})\bigvee\max_{1\leq k<q}\frac{\VU(U_{n-k})}{\vnorm[\VX]{G\VY}}\right]\\
  &\leq\sum_{0\leq k< p}\PE_z\left[\VX(X_{n-k})\right]
    +\vnorm[\VX]{G\VY}^{-1} \ \sum_{1\leq k<q}\PE_z\left[\VY(Y_{n-k})\right]\\
    &\leq2\,\sum_{0\leq k< p\vee q}\PE_z\left[\VX(X_{n-k})\right]\;,
\end{align*}
where we used that $\VU(U_{n-k})=\VU(\Upsilon(Y_{n-k}))=\VY(Y_{n-k})$ and
$\PE_z\left[\VY(Y_{n-k})\right]=\PE_z\left[G\VY(X_{n-k})\right]\leq\vnorm[\VX]{G\VY}\PE_z\left[\VX(X_{n-k})\right]$,
which is valid for $n-k\geq0$.
Now, by~(\ref{eq:ergoVX}), for any $\lambda\in(0,1)$, we can find $m\in\zsetp$
and $M>0$ such that $\PE_z\left[\VX(X_{m-k})\right]\leq \lambda(V(z)+M)/(2(p\vee
q))$ for  $0\leq k< p\vee q$. Hence $R^mV\leq\lambda V+\beta$ for
$\beta=M\lambda$.
\end{proof}
We now proceed with~\ref{item:first-step-ergo}.
\begin{lemma}\label{lem:reaching}
\ref{item:CCMShyp}, \ref{assum:bound:rho:gen} and~\ref{assum:reaching:mild} imply~\ref{assum:reachable:gen}.
\end{lemma}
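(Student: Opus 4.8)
The plan is to turn the reachability requirement into a statement about sets of observation paths of positive $\nu^{\otimes m}$-measure, and then to use the contraction supplied by~\ref{assum:bound:rho:gen} to manufacture a limit point that is the same for every starting state. Iterating the kernel~\eqref{eq:def:kernel:R:gen} and writing $G(x;\rmd y)=g(x;y)\,\nu(\rmd y)$, one has
\begin{equation*}
R^m(z;\mathcal N)=\int_{\Yset^m}\1_{\mathcal N}\!\left(\F[]{\chunk y0{(m-1)}}(z)\right)\prod_{k=0}^{m-1}g\!\left(\Projarg p{\F[]{\chunk y0{(k-1)}}(z)};y_k\right)\,\nu^{\otimes m}(\rmd y)\;,
\end{equation*}
with the convention $\F[]{\chunk y0{-1}}(z)=z$. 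Since $g>0$ everywhere by~\eqref{eq:ass:g}, the integrand is strictly positive exactly on $\set{y\in\Yset^m}{\F[]{\chunk y0{(m-1)}}(z)\in\mathcal N}$, so it suffices to show that, for a well-chosen candidate point $z_\infty$ and for every $z\in\Zset$ and every neighborhood $\mathcal N$ of $z_\infty$, this set has positive $\nu^{\otimes m}$-measure for some $m$.

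To build $z_\infty$, fix $u_0\in\Uset$ and consider the $m$-fold composition $\tilde\Psi_{u_0}^{m}$ (constant input $u_0$). By the same computation as in~\eqref{eq:lipschitz:Psi}, which only involves the constants $\mathrm{Lip}_n$ defined in~\eqref{eq:Lip:constant:n:def} over all of $\Uset^n$, this map is $\Zmet$-Lipschitz with constant at most $\1_{\{m<q\}}\vee\max_{0\le j<p}\mathrm{Lip}_{m-j}$, which tends to $0$ by~\ref{assum:bound:rho:gen}. Hence, for $m$ large, $\tilde\Psi_{u_0}^{m}$ is a strict contraction of $(\Zset,\Zmet)$, which is complete and separable by~\ref{item:CCMShyp} and the definition~\eqref{eq:def:Zmet} of $\Zmet$. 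The Banach fixed point theorem then provides a unique fixed point $z_\infty(u_0)$ with $\tilde\Psi_{u_0}^{m}(z)\to z_\infty(u_0)$ as $m\to\infty$; the limit is independent of $z$ because $\Zmet(\tilde\Psi_{u_0}^{m}(z),\tilde\Psi_{u_0}^{m}(z'))\to0$, which is precisely what reachability from all $z$ demands.

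It then remains to drive the chain towards $z_\infty(u_0)$, and here the two alternatives of~\ref{assum:reaching:mild} are handled differently. Under~\ref{assum:reaching:mild}\ref{assum:reaching:atom}, I would take $y_0$ with $\nu(\{y_0\})>0$, set $u_0=\Upsilon(y_0)$ and $z_\infty=z_\infty(u_0)$; given $z,\mathcal N$, pick $m$ with $\F[]{(y_0,\dots,y_0)}(z)=\tilde\Psi_{u_0}^{m}(z)\in\mathcal N$, so the path set contains the single point $(y_0,\dots,y_0)$, of measure $\nu(\{y_0\})^m>0$. Under~\ref{assum:reaching:mild}\ref{assum:reaching:cont}, I would instead choose $u_0$ in the support of the pushforward measure $\nu\circ\Upsilon^{-1}$ on $(\Uset,\Umet)$ — such a point exists because $\Uset$ is separable by~\ref{item:CCMShyp} and $\nu\circ\Upsilon^{-1}$ is nonzero, so by a Lindel\"of covering argument the points whose neighborhoods are all null cannot exhaust $\Uset$ — and set $z_\infty=z_\infty(u_0)$. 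Given $z,\mathcal N$, fix $m$ with $\tilde\Psi_{u_0}^{m}(z)\in\mathcal N$; since $(x,u)\mapsto\tilde\psi_u(x)$ is continuous, the composition $\chunk u0{(m-1)}\mapsto\tilde\Psi_{u_{m-1}}\circ\cdots\circ\tilde\Psi_{u_0}(z)$ is continuous, so there is $\epsilon>0$ such that $\Umet(u_k,u_0)<\epsilon$ for all $k$ forces $\tilde\Psi_{u_{m-1}}\circ\cdots\circ\tilde\Psi_{u_0}(z)\in\mathcal N$. Because $\Psi_y=\tilde\Psi_{\Upsilon(y)}$, the path set then contains $(\Upsilon^{-1}(\ball{u_0}{\epsilon}))^{m}$, of measure $\nu(\Upsilon^{-1}(\ball{u_0}{\epsilon}))^{m}>0$ by the choice of $u_0$.

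The hard part is the continuous case~\ref{assum:reaching:mild}\ref{assum:reaching:cont}: since $\Upsilon$ is merely measurable and $\nu$ may be non-atomic, one cannot force a prescribed observation value, and the naive target $\F[]{(y_0,\dots,y_0)}$ is unavailable. The resolution is to work with the $u$-parametrized iterates (so that the continuity hypothesis applies) and to aim at the limit point attached to a point $u_0$ in the topological support of $\nu\circ\Upsilon^{-1}$, whose small $\Umet$-balls pull back under $\Upsilon$ to sets of positive $\nu$-measure; the contraction in~\ref{assum:bound:rho:gen} is exactly what makes this target common to all initial states, as reachability requires.
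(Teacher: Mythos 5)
Your proof is correct and follows essentially the same route as the paper's: a common limit point $z_\infty$ built from constant-input iterates of $\tilde\Psi_{u_0}$ using \ref{assum:bound:rho:gen} and the completeness granted by \ref{item:CCMShyp}, the single constant path $(y_0,\dots,y_0)$ in case \ref{assum:reaching:mild}\ref{assum:reaching:atom}, and, in case \ref{assum:reaching:mild}\ref{assum:reaching:cont}, a point $u_0$ in the support of $\nu\circ\Upsilon^{-1}$ obtained from separability together with continuity of the $u$-parametrized iterates to produce a tube of observation paths of positive $\nu^{\otimes m}$-measure. Your only departures are cosmetic: Banach's fixed point theorem applied to a high iterate in place of the paper's geometric Cauchy argument (\autoref{lem:Lip:XY-cond:geom} and \autoref{lemma_reaching-limit-iterates}), and an explicit product-density representation of $R^m$ in place of the paper's backward conditioning step to conclude positivity.
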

\begin{proof}
  We separate the proof in two cases: first we
  assume~\ref{assum:reaching:mild}\ref{assum:reaching:atom} and
  secondly, we
  assume~\ref{assum:reaching:mild}\ref{assum:reaching:cont}.

  \noindent\textbf{Case~1:} Assume~\ref{assum:reaching:mild}\ref{assum:reaching:atom}.
  In this case, we pick $y_0\in\Yset$ as
  in~\ref{assum:reaching:mild}\ref{assum:reaching:atom} Set $y_k=y_0$
  and $u_k=\Upsilon(y_0)$
  for all $k\in\zsetp$. By \autoref{lemma_reaching-limit-iterates}, we
  have that $\f{\chunk y0n}(z)=\tf{\chunk u0n}(z)$ converges to the same
  $x_\infty$ in $\Xset$, for all $z\in\zset$. Now set
  $z_\infty=(x_\infty,\dots,x_\infty,\Upsilon(y_0),\dots,\Upsilon(y_0))\in\Zset$. Then,
  for any $z\in\Zset$, by~(\ref{eq:psi:Psi:relation:recip}) and~(\ref{eq:def:Zmet}), for
  any $\delta>0$, there exists $m\in\zsetpnz$ such that
  $\Zmet(z_\infty,\F{\chunk y0m}(z))<\delta$ and thus
    \begin{align*}
    R^{m+1}\left(z;\set{z'\in\Zset}{\Zmet(z_\infty,z')<\delta}\right)&\geq\PP_z(Z_{m+1}=\F{\chunk y0m}(z))\\
                                                       &\geq\PP_z(Y_k=y_0,\ \forall k \in \{0,\ldots, m\})\\
      &=\prod_{k=0}^mG^\theta\left(\f{\chunk y0k}(z);\{y_0\}\right)\;.
    \end{align*}
    By~\ref{assum:reaching:mild}~\ref{assum:reaching:atom}
    and~(\ref{eq:ass:g}), we have $G^\theta(x;\{y_0\})>0$ for all
    $x\in\Xset$, and we conclude that $z_\infty$ is a reachable point.

    \noindent\textbf{Case~2:}
    Assume~\ref{assum:reaching:mild}\ref{assum:reaching:cont}. Since
  $(\Uset,\Umet)$ is assumed to be separable in~\ref{item:CCMShyp},
  there exists $u_0\in\Uset$ such that
  \begin{equation}
    \label{eq:yzero-well-chosen}
    \text{for all $\delta>0$},\quad\nu\left(\set{y\in\Yset}{\Umet(u_0,\Upsilon(y))<\delta}\right)>0\;.
  \end{equation}
  For any $u\in\Uset$ and any integers $k\leq l$, in the
  following, we denote by $\chunk{[u]}kl$ the constant sequence
  $\chunk{u}kl$ in $\Uset^{l-k+1}$ defined by
  $u_j=u$ for all $k\leq j\leq l$. By
  \autoref{lemma_reaching-limit-iterates}, we have that
  $\tf{\chunk{[u_0]}0n}(z)$ converges to the same $x_\infty$ in
  $\Xset$, for all $z\in\zset$.  And
  by~(\ref{eq:def:tpsi:gen:od}), setting
  $z_\infty=(x_\infty,\dots,x_\infty,u_0,\dots,u_0)\in\Zset$, we have
  that, for all $z\in\Zset$,
    \begin{equation}
    \label{eq:yzero-well-chosen-suite-ter}
    \lim_{m\to\infty}\tF{\chunk{[u_0]}0m}(z)=
      \lim_{m\to\infty}\underbrace{\tilde{\Psi}_{u_0}\circ\dots\circ\tilde{\Psi}{u_0}}_{m\;
        \text{times}}(z)=z_\infty\quad\text{ in
    $\Zset$.}
\end{equation}
Now pick $z\in\Zset$ and
$\delta>0$. By~(\ref{eq:yzero-well-chosen-suite-ter}), we can choose
an integer $m$ such that
$$
\Zmet(z_\infty,\tF{\chunk{[u_0]}0m}(z))<\delta/2\;.
$$
Moreover,
using~\ref{assum:reaching:mild}~\ref{assum:reaching:cont}
and~(\ref{eq:def:tpsi:gen:od}), we have that, for any $m\in\zsetp$,
$\chunk{u}0m\mapsto\tF{\chunk{u}0m}(z)$ is continuous on $\Uset^{m+1}$. Hence there
exists $\delta'>0$ such that, for all $\chunk y0m\in\Yset^{m+1}$, $\Umet(u_0,\Upsilon(y_k))<\delta'$ for
all $0\leq k\leq m$ implies
$\Zmet(\tF{\chunk{[u_0]}0m}(z),\tF{\Upsilon^{\otimes(m+1)}(\chunk{y}0m)}(z))<\delta/2$,
which with the previous display gives that
$$
\forall0\leq k\leq
m\,,\;\Umet(u_0,\Upsilon(y_k))<\delta'\Longrightarrow\Zmet(z_\infty,\tF{\Upsilon^{\otimes(m+1)}(\chunk{y}0m)}(z))<\delta\;.
$$
Thus we have
$$
R^{m+1}\left(z;\set{z}{\Zmet(z_\infty,z)<\delta}\right)
\geq\PP_z(\Umet(u_0,\Upsilon(Y_k))<\delta',\ 0\leq k\leq m)\;.
$$
Applying~(\ref{eq:def:ob:standard}) and
then~(\ref{eq:yzero-well-chosen}) with~(\ref{eq:ass:g}), we have, for all $\ell\geq1$,
$$
\PP_z(\Umet(u_0,\Upsilon(Y_{\ell}))<\delta'|\mathcal{F}_{\ell})=
G^\theta(\Projarg{p}{Z_{\ell}};\set{y}{\Umet(u_0,\Upsilon(y))<\delta'})>0\;.
$$
It follows that for all $\ell=m,m-1,\dots,1$, conditioning on
$\mathcal{F}_{\ell}$,
$\PP_z(\Umet(u_0,\Upsilon(Y_k))<\delta',\ 0\leq k\leq{\ell})=0$
implies
$\PP_z(\Umet(u_0,\Upsilon(Y_k))<\delta',\ 0\leq k\leq{\ell-1})=0$.
Since
$\PP_z(\Umet(u_0,\Upsilon(Y_{0}))<\delta')=
G^\theta(\Projarg{p}{z};\set{y}{\Umet(u_0,\Upsilon(y))<\delta'})>0$,
we conclude that
$R^{m+1}\left(z;\set{z}{\Zmet(z_\infty,z)<\delta}\right)>0$ and
$z_\infty$ is a reachable point.
\end{proof}
We now proceed with~\ref{item:alphabar-step-ergo}.
Let us define $\bar\alpha=\alpha\circ\Proj
p^{\otimes2}$. By~(\ref{eq:def:Zmet}) and~(\ref{eq:Wbar:def}), we have $W\geq\WX\circ\Proj
p^{\otimes2}$ and $\Zmet\geq\Xmet\circ\Proj
p^{\otimes2}$. Hence~(\ref{eq:assum:definition-gamma-x:gen:bar}) follows from~\ref{assum:alpha-phi:gen}\ref{assum:hyp:1-alpha:W:gen}.
Condition~(\ref{eq:assum:hyp:1-alpha:W:gen:bar}) directly follows
from~\ref{assum:alpha-phi:gen}\ref{assum:definition-gamma-x:gen} and the
definition of $W$ in~(\ref{eq:Wbar:def}).

We now proceed with~\ref{item:second-step-ergo}.
\begin{lemma}\label{lem:coupling:con:gen:od}
  Let $\alpha:\Xset^2\to[0,1]$ be a measurable function and $\underG$ be a Markov kernel on $\Xset^2\times \Ysigma$ satisfying~\ref{assum:alpha-phi:gen}\ref{item:alpha-phi:3:gen} and
  define the Markov kernel $\hat R$ on
  $(\Zset^2,\Zsigma^{\otimes2})$ by
\begin{equation}\label{eq:con:min-g-phi-gen:od:defJ}
  \hat Rf(v)=   \int_{\Yset} f\circ\Psi_y^{\otimes2}(v)\;\underG\left(\Proj p^{\otimes2}(v);\rmd y\right)\eqsp.
\end{equation}
Then one can define
  a Markov kernel $\bar R$ on
  $(\Zset^2\times\{0,1\},\Zsigma^{\otimes2}\otimes\mathcal{P}(\{0,1\}))$ which
  satisfies~(\ref{eq:majo:d:gen:od:coupling:theirEq3})
  and~(\ref{eq:majo:d:gen:od:coupling:proba:theirEq5}).
\end{lemma}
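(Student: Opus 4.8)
The plan is to obtain $\bar R$ by the standard splitting (basic coupling) construction, the inequality~\ref{assum:alpha-phi:gen}\ref{item:alpha-phi:3:gen} being exactly what is needed to split off a \emph{common} part and two \emph{residual} parts. Throughout write $x=\Projarg pz$ and $x'=\Projarg p{z'}$, so that $\bar\alpha(z,z')=\alpha(x,x')$. Since $\underG$ is a Markov kernel, $\int\underg(x,x';y)\,\nu(\rmd y)=1$, and hence by~(\ref{eq:con:min-g-phi-gen:od}) the density $\alpha(x,x')\,\underg(x,x';\cdot)$ is nonnegative, dominated both by $g(x;\cdot)$ and by $g(x';\cdot)$, and of total $\nu$-mass $\alpha(x,x')$. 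The two residual densities $r(y):=g(x;y)-\alpha(x,x')\,\underg(x,x';y)$ and $r'(y):=g(x';y)-\alpha(x,x')\,\underg(x,x';y)$ are therefore nonnegative and each integrates to $1-\alpha(x,x')$, recalling that $g(x;\cdot)$ and $g(x';\cdot)$ are probability densities.

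I would then define, for every $v=(z,z',u)\in\Zset^2\times\{0,1\}$ (the coordinate $u$ being irrelevant) and every nonnegative measurable $F$ on $\Zset^2\times\{0,1\}$,
\begin{align*}
\bar R(v;F)={}&\int_{\Yset} F\bigl(\Psi_y(z),\Psi_y(z'),1\bigr)\,\alpha(x,x')\,\underg(x,x';y)\,\nu(\rmd y)\\
&+\frac{1}{1-\alpha(x,x')}\int_{\Yset}\!\int_{\Yset} F\bigl(\Psi_y(z),\Psi_{y'}(z'),0\bigr)\,r(y)\,r'(y')\,\nu(\rmd y)\,\nu(\rmd y')\;,
\end{align*}
with the convention that the second term is dropped when $\alpha(x,x')=1$. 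The first term realises the coupling event $\{1\}$, where a single innovation $y$ is fed into both components through $\Psi_y^{\otimes2}$; the second realises the event $\{0\}$ through an \emph{independent} coupling of the two residuals, renormalised so as to carry the correct mass $1-\alpha(x,x')$.

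The verification of~(\ref{eq:majo:d:gen:od:coupling:theirEq3}) and~(\ref{eq:majo:d:gen:od:coupling:proba:theirEq5}) is then routine. Restricting to the event $\{1\}$ annihilates the second term, and for every measurable $B\subseteq\Zset^2$ the first term gives $\bar R(v;B\times\{1\})=\int_\Yset \1_B\bigl(\Psi_y^{\otimes2}(z,z')\bigr)\,\alpha(x,x')\,\underg(x,x';y)\,\nu(\rmd y)=\bar\alpha(z,z')\,\hat R\bigl((z,z'),B\bigr)$ by the definition~(\ref{eq:con:min-g-phi-gen:od:defJ}) of $\hat R$; this is~(\ref{eq:majo:d:gen:od:coupling:proba:theirEq5}). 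For the first $\Zset$-marginal, applying $\bar R(v;\cdot)$ to $F(w,w',b)=\1_A(w)$, the $\{1\}$-part contributes $\int\1_A(\Psi_y(z))\,\alpha(x,x')\,\underg(x,x';y)\,\nu(\rmd y)$, while the $\{0\}$-part contributes $\int\1_A(\Psi_y(z))\,r(y)\,\nu(\rmd y)$, the factor $\int r'(y')\,\nu(\rmd y')=1-\alpha(x,x')$ cancelling the prefactor; their sum equals $\int\1_A(\Psi_y(z))\,g(x;y)\,\nu(\rmd y)=R(z,A)$ by~(\ref{eq:def:kernel:R:gen}). The second marginal is identical by symmetry, yielding $R(z',\cdot)$; taking $A=\Zset$ shows in particular that $\bar R$ has total mass $1$, so it is a genuine Markov kernel.

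The only delicate point — and the main, though minor, obstacle — is the degenerate case $\alpha(x,x')=1$, in which $r\equiv r'\equiv 0$ and the factor $1/(1-\alpha(x,x'))$ is undefined: there the $\{0\}$-part carries no mass and is simply omitted, and all the identities above persist since both residuals vanish $\nu$-almost everywhere. Measurability of $v\mapsto\bar R(v;F)$ follows from that of $\alpha$, $g$, $\underg$ and $(y,z)\mapsto\Psi_y(z)$ via Tonelli's theorem, noting that on $\{\alpha<1\}$ the renormalising factor is measurable while on $\{\alpha=1\}$ the second term is absent, so the renormalisation raises no measurability issue.
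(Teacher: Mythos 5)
Your proposal is correct and is essentially the paper's own argument: the paper constructs the same splitting coupling probabilistically, drawing $\bar Y\sim\underG(x,x';\cdot)$, independent residual draws $\Lambda,\Lambda'$ from the normalized densities $(1-\alpha(x,x'))^{-1}\bigl[g(x;\cdot)-\alpha(x,x')\,\underg(x,x';\cdot)\bigr]$ and its primed analogue, and a Bernoulli$(\alpha(x,x'))$ flag $\epsilon$, then pushes the resulting kernel $\bar H$ on $\Ysigma^{\otimes2}\otimes\mathcal{P}(\{0,1\})$ forward through $(y,y')\mapsto(\Psi_y(z),\Psi_{y'}(z'))$, which is exactly your explicit two-term formula, including the separate treatment of the degenerate case $\alpha(x,x')=1$ where $G(x;\cdot)=G(x';\cdot)=\underG(x,x';\cdot)$. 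Your marginal and $\{1\}$-restriction computations match the paper's verification of~(\ref{eq:majo:d:gen:od:coupling:theirEq3}) and~(\ref{eq:majo:d:gen:od:coupling:proba:theirEq5}).
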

\begin{proof}
  We first define a probability kernel $\bar H$ from $\Zset^2$ to
  $\Ysigma^{\otimes 2}\otimes\mathcal{P}(\{0,1\})$ Let $(z, z')\in\Zset^2$ and
  set $x=\Projarg p{z}$ and $x'=\Projarg p{z'}$. We define
  $\bar H((z,z');\cdot)$ as the distribution of $(Y, Y',\epsilon)$ drawn as
  follows.  We first draw a random variable $\bar Y$ taking values in $\Yset$
  with distribution $\underG(x,x';\cdot)$. Then we define $(Y,Y',\epsilon)$ by
  separating the two cases, $\alpha(x,x')=1$ and $\alpha(x,x')<1$.
\begin{enumerate}[label=-]
\item
 Suppose first that $\alpha(x,x')=1$. Then
    by~\ref{assum:alpha-phi:gen}\ref{item:alpha-phi:3:gen}, we have
  $G(x;\cdot)=G(x';\cdot)=\underG(x,x';\cdot)$.
 In this case, we set $(Y,Y',\epsilon)=(\bar{Y},\bar Y,1)$.
\item
 Suppose now that $\alpha(x,x')<1$. Then,
 using~\eqref{eq:con:min-g-phi-gen:od}, the functions
$(1-\alpha({x,x'}))^{-1}\left[g(x;\cdot)-\alpha({x,x'})\underg(x,x';\cdot)\right]$
and
$(1-\alpha({x,x'}))^{-1}\left[g(x';\cdot)-\alpha({x,x'})\underg(x,x';\cdot)\right]$
are probability density functions with respect to $\nu$ and we draw
$\Lambda$ and $\Lambda'$ according to these two density functions,
respectively. We then draw $\epsilon$ in $\{0,1\}$ with
mean $\alpha(x, x')$ and, assuming $\bar Y$, $\Lambda$, $\Lambda'$ and $\epsilon$
to be independent, we set
\[
(Y,Y')=
\begin{cases} (\bar Y, \bar Y)  \quad &\text{if $\epsilon=1$} \eqsp,\\
(\Lambda, \Lambda') \quad &\text{if $\epsilon=0$} \eqsp.
\end{cases}
\]
\end{enumerate}
One can easily check that the kernel $\bar H$ satisfies
the following marginal conditions,
  for all $(z,z') \in \Zset^2$ and $B \in \Ysigma$,
\begin{equation}\label{eq:marginalConditionsH:gen}
  \begin{cases}
    \bar H((z,z');B \times \Yset\times\{0,1\})
    =G(\Projarg{p}{z};B)\,, \\
    \bar H((z,z');\Yset \times B\times\{0,1\} )
    =G(\Projarg{p}{z'};B) \eqsp,
  \end{cases}
\end{equation}
  Define the Markov kernel $\bar R$ on
  $(\Zset^2\times\{0,1\},\Zsigma^{\otimes2}\otimes\mathcal{P}(\{0,1\}))$ by
  setting for all $(z,z',u)\in\Zset^2\times\{0,1\}$ and $A\in \Zsigma^{\otimes2}\otimes\mathcal{P}(\{0,1\})$,
\begin{equation*}
\bar
R((z,z',u);A)=\int\1_A\left(\Psi_{y}(z),\Psi_{y'}(z'),u_1\right)\;\bar
H((z,z');\rmd y\ \rmd y'\ \rmd u_1).
\end{equation*}
Then~(\ref{eq:marginalConditionsH:gen}) and~(\ref{eq:def:kernel:R:gen}) immediately
gives~(\ref{eq:majo:d:gen:od:coupling:theirEq3}). To conclude the proof we
check~(\ref{eq:majo:d:gen:od:coupling:proba:theirEq5}). We have, for all
$v=(z,z',u)\in\Zset^2\times\{0,1\}$ and $A\in \Zsigma^{\otimes2}$,
$$
\bar R(v;A\times\{1\})=\PE\left[\1_A\left(\Psi_{y}(\bar Y),\Psi_{y'}(\bar Y)\right)\1_{\{\epsilon=1\}}\right]\;,
$$
where $\bar Y$ and $\epsilon$ are independent and distributed according to
$\underG(\Proj p^{\otimes2}(z,z');\cdot)$ and a Bernoulli distribution with
mean $\bar\alpha(z,z')$. This, and the definition of $\hat R$
in~(\ref{eq:con:min-g-phi-gen:od:defJ}) lead
to~(\ref{eq:majo:d:gen:od:coupling:proba:theirEq5}).
\end{proof}
In order to achieve~\ref{item:final-step-ergo}, we rely on the following result
which is an adaption of \cite[Lemma~9]{dou:kou:mou:2013}.
\begin{lemma} \label{lem:practical:conditions:gen:od}
Assume that there exists $(\varrho, D_1, \ell)\in (0,1) \times\rsetp\times\zsetpnz$ such that for all $(z,z')\in \Zset^2$,
\begin{align}
& \hat R\left((z,z'); \{\Zmet \leq D_1\Zmet(z,z')\}\right)=1, \label{eq:contrac:gen:k} \\
& \hat R^\ell\left((z,z'); \{ \Zmet \leq \varrho\Zmet(z,z')\}\right)=1, \label{eq:contrac:gen:ell}
\end{align}
and that $W:\Zset^2\to\rsetp$ satisfies
  \begin{align} \label{eq:driftCond:W:gen:expo-not}
\lim_{\zeta\to\infty}  \limsup_{n\to\infty}\frac1n\ \ln\,\sup_{v\in\Zset^2}\frac{\hat R^nW(v)}{W^\zeta(v)}\leq0\;,
\end{align}
Then, \eqref{eq:majo:d:gen:od} and \eqref{eq:majo:d:w:gen:od} hold.
\end{lemma}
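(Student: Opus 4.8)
The plan is to read the two hypotheses \eqref{eq:contrac:gen:k} and \eqref{eq:contrac:gen:ell} as \emph{pathwise} (almost sure) contraction statements. Let $(Z_k,Z'_k)_{k\ge0}$ be a Markov chain on $\Zset^2$ with transition kernel $\hat R$ and initial value $v=(z,z')$, let $\hat\PE_v$ denote the associated expectation, and abbreviate $\Delta_k=\Zmet(Z_k,Z'_k)$ and $W_k=W(Z_k,Z'_k)$. The key intermediate claim is the deterministic geometric bound
\[
\Delta_n\le D\,\rho^n\,\Delta_0\,,\qquad \hat\PE_v\as
\]
for suitable $D\ge1$ and $\rho\in(0,1)$. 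Granting this, \eqref{eq:majo:d:gen:od} is immediate, since $\hat R^n((z,z');\Zmet)=\hat\PE_v[\Delta_n]$ and taking expectations in the a.s. bound gives $\hat R^n((z,z');\Zmet)\le D\rho^n\Zmet(z,z')$.

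To prove the a.s. bound I would first note that \eqref{eq:contrac:gen:k} says precisely that $\Delta_{k+1}\le D_1\Delta_k$ holds $\hat\PE_v\as$ for every $k$, while \eqref{eq:contrac:gen:ell} gives $\Delta_{(k+1)\ell}\le\varrho\,\Delta_{k\ell}$ $\hat\PE_v\as$; both propagate through the chain by the Markov property, so that by induction $\Delta_{k\ell}\le\varrho^k\Delta_0$ a.s. Decomposing $n=k\ell+j$ with $0\le j<\ell$ and applying the one-step bound to the last $j$ steps yields $\Delta_n\le D_1^{\,j}\varrho^k\Delta_0\le (D_1\vee1)^{\ell-1}\varrho^{k}\Delta_0$ a.s. Setting $\rho=\varrho^{1/\ell}\in(0,1)$ and using $\varrho^k=\rho^{\,n-j}\le\rho^{-(\ell-1)}\rho^n$ gives the claim with $D=[(D_1\vee1)/\rho]^{\ell-1}$. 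This part is just bookkeeping of the a.s. contraction together with an elementary Euclidean-division argument.

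For \eqref{eq:majo:d:w:gen:od} the plan is to use the same pathwise bound to extract the geometric factor from the $\Delta$-term, and then pay for the possible growth of $W$ via \eqref{eq:driftCond:W:gen:expo-not}. From $\Delta_n\le D\rho^n\Delta_0$ a.s. we get $\Delta_nW_n\le D\rho^n\Delta_0\,W_n$, whence, after taking expectation,
\[
\hat R^n\big((z,z');\Zmet\times W\big)=\hat\PE_v[\Delta_nW_n]\le D\rho^n\,\Zmet(z,z')\,\hat R^nW(z,z')\,.
\]
Writing $L(\zeta)=\limsup_{n}\tfrac1n\ln\sup_{v}\hat R^nW(v)/W^\zeta(v)$, I would observe that $L$ is nonincreasing in $\zeta$ (because $W\ge1$), so $\lim_{\zeta\to\infty}L(\zeta)$ is its infimum, which is $\le 0<\ln(1/\rho)$ by \eqref{eq:driftCond:W:gen:expo-not}. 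Hence one may fix $\zeta\ge1$ with $L(\zeta)<\ln(1/\rho)$ and then $\epsilon\in(L(\zeta),\ln(1/\rho))$, so that $\rho\rme^{\epsilon}<1$. By definition of the $\limsup$ there is $N$ with $\hat R^nW(v)\le\rme^{\epsilon n}W^\zeta(v)$ for all $n\ge N$ and all $v$, and absorbing the finitely many initial terms into a constant $C_0\ge1$ gives $\hat R^nW(v)\le C_0\rme^{\epsilon n}W^\zeta(v)$ for every $n\ge0$. Substituting yields \eqref{eq:majo:d:w:gen:od} with exponents $\zeta_1=1$, $\zeta_2=\zeta$, rate $\rho\rme^{\epsilon}<1$ and constant $DC_0$; since \eqref{eq:majo:d:gen:od} holds a fortiori with this larger rate, a single pair $(D,\rho)$ serves both bounds.

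The main obstacle is the last step, namely upgrading the $\limsup$-statement \eqref{eq:driftCond:W:gen:expo-not} to a bound valid for \emph{every} $n$: the $\limsup$ only controls large $n$, so I would need to check that the finitely many ratios $\sup_v \hat R^nW(v)/W^\zeta(v)$ for $n<N$ are finite in order to absorb them into $C_0$ (for $n=0$ this is clear, since $W\ge1$ and $\zeta\ge1$ give $W^{1-\zeta}\le1$). This finiteness is the only genuinely delicate point, typically secured by a one-step growth control on $\hat R$ iterated through the Markov property. The need to take $\zeta$ large enough to beat the contraction rate is exactly what forces the exponent $\zeta_2$ in \eqref{eq:majo:d:w:gen:od} to be possibly larger than $1$, and is where the subexponential growth encoded in \eqref{eq:driftCond:W:gen:expo-not} is essential.
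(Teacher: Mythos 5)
Your proof is correct and is essentially the paper's own argument: the paper encodes your pathwise bounds $\Delta_{k+1}\le D_1\Delta_k$ and $\Delta_{(k+1)\ell}\le\varrho\,\Delta_{k\ell}$ as the equivalent operator inequalities $\hat R(\Zmet\times f)\leq D_1\,(\Zmet\times\hat Rf)$ and $\hat R^\ell(\Zmet\times f)\leq\varrho\,(\Zmet\times\hat R^\ell f)$, performs the same Euclidean division $n=k\ell+r$ with $\rho'=\varrho^{1/\ell}$, and then takes $f\equiv1$ for \eqref{eq:majo:d:gen:od} and $f=W$ together with a $\zeta$ chosen so that $\sup_v \hat R^nW/W^\zeta=O(\delta^n)$ with $\rho'\delta<1$ for \eqref{eq:majo:d:w:gen:od}, exactly matching your choice $\delta=\rme^{\epsilon}$, $\zeta_1=1$, $\zeta_2=\zeta$. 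The small-$n$ finiteness issue you flag is passed over silently in the paper's ``$O(\delta^n)$'' step as well; it is harmless in the paper's applications, where \eqref{eq:driftCond:W:gen:expo-not} is verified through bounds of the form $\hat R^nW\lesssim W^\tau$ holding for every fixed $n$ (compare the explicit finite-$n$ requirement in~\ref{assum:alpha-phi:gen}\ref{assum:driftCond:W:new:gen:od:new}).
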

\begin{proof}
Note that~(\ref{eq:contrac:gen:k}) implies, for any non-negative measurable
function $f$ on $\Zset^2$, $\hat R(\Zmet\times f)\leq
D_1\,\left(\Zmet\times\hat Rf\right)$ and~(\ref{eq:contrac:gen:ell}) implies $\hat R^\ell(\Zmet\times f)\leq
\varrho\,(\Zmet\times\hat R^\ell f)$. Hence for all $n\geq1$, writing
$n=k\ell+r$, where $r\in\{0,\ldots, \ell-1\}$ and $k\in\zsetp$, we get, setting $\rho'=\varrho^{1/\ell}$,
$$
\hat R^n(\Zmet\times f)\leq D_1^r\,\varrho^k\,\left(\Zmet\times\hat R^nf\right)
\leq\left(1\vee D_1^{\ell-1}\right)\varrho^{-1}\,\rho^{\prime\ n}\,\left(\Zmet\times\hat R^nf\right)\;.
$$
Taking $f\equiv1$, we get \eqref{eq:majo:d:gen:od} for any
$\rho\in[\rho',1)$. To obtain \eqref{eq:majo:d:w:gen:od}, we take $f=W$ and
observe that~(\ref{eq:driftCond:W:gen:expo-not}) implies
that,  for any $\delta>1$, there exists $\zeta>0$ such that $\sup(\hat R^nW/W^\zeta)=O(\delta^n)$. Choosing $\delta$ small
enough to make $\rho'\delta<1$, we get  \eqref{eq:majo:d:w:gen:od} with
$\rho=\rho'\delta$, $\zeta_1=1$ and $\zeta_2=\zeta$.
\end{proof}
We finally conclude~\ref{item:final-step-ergo}.  By
\autoref{lem:practical:conditions:gen:od}, it remains to
check Conditions~\eqref{eq:contrac:gen:k} and \eqref{eq:contrac:gen:ell},
and~(\ref{eq:driftCond:W:gen:expo-not}).
For all $\ell\geq 1$, we have
$\hat R^\ell\left(v;\set{\F[]{\chunk y1\ell}^{\otimes2}(v)}{\chunk y1\ell\in\Yset^\ell}\right)=1$.
Using~(\ref{eq:lipschitz:Psi}), we thus get
$$
\hat R^\ell\left(v;\Zmet\leq\Zmet(v)\left(\1_{\{\ell<q\}}\vee\max_{(\ell-p)_+<m\leq\ell}\mathrm{Lip}_m\right)\right)=1\;,
  $$
and ~\eqref{eq:contrac:gen:k} and \eqref{eq:contrac:gen:ell} both follow from~\ref{assum:bound:rho:gen}.

We now
check~(\ref{eq:driftCond:W:gen:expo-not}). By~(\ref{eq:con:min-g-phi-gen:od:defJ})~(\ref{eq:Rhatdef:XXY})
and~(\ref{eq:Wbar:def}), for all $n\geq q$ and $v\in\Zset^2$, $\hat R^nW(v)$ can be written as
\begin{align*}
\hat R^nW(v)&=\hat \PE_v\left[\max_{0\leq k<
               p}\WX(X_{n-k},X'_{n-k})\bigvee\max_{1\leq
  k<q}\frac{\WY(Y_{n-k})\vee\WY(Y'_{n-k})}{\vnorm[\WX]{\underG\WY}}\right]
  \\
  &\leq\sum_{0\leq k< p\vee q}\hat \PE_v\left[\WX(X_{n-k},X'_{n-k})
    +\frac{\WY(Y_{n-k})+\WY(Y'_{n-k})}{\vnorm[\WX]{\underG\WY}}\right]\\
    &\leq3\,\sum_{0\leq k< p\vee q}\hat \PE_v\left[\WX(X_{n-k},X'_{n-k})\right]\;,
\end{align*}
where we used, for $n-k\geq0$,
$\hat\PE_v\left[\WY(Y_{n-k})\right]=\hat
\PE_v\left[\WY(Y'_{n-k})\right]=\hat\PE_v\left[\underG\WY(X_{n-k},X'_{n-k})\right]$.
We directly get that~\ref{assum:driftCond:W:new:gen:od}
implies~(\ref{eq:driftCond:W:gen:expo-not}).  As for the two conditions
in~\ref{assum:driftCond:W:new:gen:od:new}, the first one implies that, for any
$\rho\in(0,1)$, there exists $m>0$ and $\beta>0$ such that
$\hat R^mW\leq \rho W+\beta\;,$ and the second one that there exists
$\tau\geq1$ and $C>0$ such that $\hat R^rW\leq C\, W^\tau$ for all
$r=0,\dots,m-1$. Combining the two previous bounds, we obtain, for all $n=mk+r$
with $k\in\zsetp$ and $0\leq r<m$,
$$
\hat R^nW\leq \rho^k \hat R^rW+\beta/(1-\rho)\leq
\rho^k\,C\,W^{\tau}+\beta/(1-\rho)\leq
C_m\,W^{\tau}\;,
$$
where $C_m$ is a positive constant, not depending on $n$.
Hence we obtain ~(\ref{eq:driftCond:W:gen:expo-not})
and the proof is concluded.
\subsection{Proof of \autoref{thm:convergence-main:gen:od}}
\label{sec:proof-thm:conv}
We apply \cite[Theorem~1]{douc2015handy} to the embedded ODM(1,1)
with hidden variable space $\Zset$ derived in
\autoref{sec:embedding-into-an11}.  Note that our
conditions~\ref{assum:gen:identif:unique:pi:gen}
and~\ref{ass:21-lyapunov:gen:od} yield their condition (A-1) and (A-2) on the
embedded model with
$$
\bar V(x)=\max\set{\VX(x_k)}{1\leq k\leq p}\;,\quad x=\chunk x1p\in\Xset^p\;.
$$
Let us briefly check that (B-2), (B-3) and (B-4) in \cite{douc2015handy}
hold. Conditions (B-2) and (B-3) correspond to
our~\ref{assum:continuity:Y:g-theta:gen:od}
and~\ref{assum:continuity:Y:phi-theta:gen:od}, noting, for the latter one, that
$\psi_y^\theta$ here corresponds to the $\Psi_y^\theta$ defined
in~(\ref{eq:def:psi:gen:od}), inherited from the embedding.  As for (B-4) in
\cite{douc2015handy}, we have that (B-4)(i) and (B-4)(ii) corresponds to
our~\ref{assum:practical-cond:O:gen:od}\ref{assum:exist:practical-cond:subX:gen:od}
and~\ref{assum:practical-cond:O:gen:od}\ref{assum:momentCond:gen:od}, but with
$\Xset_1$ in (B-4) replaced by
$\Zset_1=\Xset^{p-1}\times\Xset_1\times\Yset^{q-1}$ with the latter $\Xset_1$
as in~\ref{assum:practical-cond:O:gen:od}.  Also our
condition~\ref{assum:bound:rho:gen}, by \autoref{lem:Lip:XY-cond:geom}
and~(\ref{eq:lipschitz:Psi}) imply (B-4)(iii) for some $\varrho\in(0,1)$ by
setting $\bar\psi(z)=C\,\Zmet(\initmle,z)$ for some $C>0$.  This $\bar\psi$ is
locally bounded, hence (B-4)(iv) holds. Condition (B-4)(v) follows (up to a
multiplicative constant)
from~\ref{assum:practical-cond:O:gen:od}\ref{item:barphi:bar:phi:gen:od} by
observing that, $\initmle$ has constant first $p$ entries and constant $q-1$
last entries,
$$
\bar\psi(\Psi^\theta_y(\initmle))=C\,\left(\Xmet(\initmlex_1,\f{y}(\initmle))\vee\Umet(\initmleu_1,\Upsilon(y))\right)\;.
$$
The remaining conditions (vi), (vii) and (viii) of (B-2) follow directly
from~(\ref{eq:ineq:loggg:gen:od}), \ref{item:H:gen:od}
and~\ref{item:22or23-barphi-V:gen:od} in~\ref{assum:practical-cond:O:gen:od}.
All the conditions of \cite[Theorem~1]{douc2015handy} are checked and this
result gives that, for all $\theta,\thv\in\Theta$, $\tilde\PP^{\thv}$\as,
$p^\theta(y|\chunk Y{-\infty}0)$ defined as
in~(\ref{eq:def-p-theta-neq-thv:gen:od}) is well defined for all $y\in\Yset$,
that, if $\theta=\thv$, it is the density of $Y_1$ given $\chunk Y{-\infty}0$
with respect to $\nu$, and also that the MLE $\mlY{\initmle,n}$
satisfies~(\ref{eq:strong-consistency-prelim:gen:od}) with $\Theta^\star$
defined by~(\ref{eq:def-Theta-star-set:gen:od}). Finally, by
\cite[Theorem~3]{doumonrou2016maximizing}, we also obtain that
$\Theta^\star=[\thv]$ and the proof is concluded.

\subsection{Proof of \autoref{theo:ergo-convergence:logpois:gen}}
\label{sec:proof-autor-conv:logpoi}
We prove
\ref{item:thm:logpois:ergodic:gen:od},~\ref{item:thm:logpois:strong:consistency:gen:od}
and~\ref{item:thm:logpois:ident} successively.

\begin{proof}[Proof of~\ref{item:thm:logpois:ergodic:gen:od}]
We apply \autoref{thm:ergodicity:gen} with
$\VX(x)=\rme^{\tau\,|x|}$ for some arbitrary $\tau>0$.  Note that
\autoref{rem:log:poi:ergo:cond}\ref{item:rem:log:poi:ergo:1}
give~\ref{item:lodm-ident-cond-cond-dens-def}, which, by
\autoref{rem:ergo:ass}\ref{item:ergo:ass:1},
gives~\ref{assum:bound:rho:gen}.
\autoref{rem:ergo:ass}\ref{item:rem:uniform-dom-conv}
gives~\ref{assum:weak:feller:X}, and
~\ref{assum:reaching:mild}\ref{assum:reaching:atom} is trivial in this example.
Hence, it only remains to show that \ref{assum:V:gen:X},
and~\ref{assum:alpha-phi:gen} hold.

We start with~\ref{assum:V:gen:X}, with $\VX(x)=\rme^{\tau\,|x|}$. We can
further set  $\VU(u)=\rme^{\tau \,|u|}$, hence $\VY(y)=\rme^{\tau|\ln(1+y)|}$ and, by
\autoref{lem:support:bound:moment:pois}, we then have
$G\VY(x)\leq2\rme^{(1+x_+)\tau}$ so that
$\vnorm[\VX]{G\VY}\leq2\rme^{\tau}$. With these definitions,~(\ref{eq:VVX:def})
leads to
\begin{equation}
  \label{eq:V:log:poi}
V(z)\geq\left(2\rme^{\tau}\right)^{-1}\,\rme^{\tau\ |z|_\infty},\qquad z\in\Zset \;,
\end{equation}
where $|z|_\infty$ denotes the max norm of $z\in\rset^{p+q-1}$.  Now,
to bound $\PE_z[\VX(X_n)]$ as $n$ grows, we see $\VX(X_n)$ as
$\rme^{\tau|\lambda(Z_n)|}$ with the specific $\lambda=\Proj p$ and,
for any linear form $\lambda$ on $\Zset$, we look for a recursion
relation applying to
$$
\PE_z\left[\rme^{\tau|\lambda(Z_1)|}\right]=\PE\left[\rme^{\tau\,\left|\tilde\lambda_z(\ln(1+V))\right|}\right]\;,
$$
where
$V\sim\mathcal{P}(\rme^{\Projarg pz})$ and, for all  $z=(\chunk x{(-p+1)}0,\chunk y{(-q+1)}{(-1)})$, $\tilde\lambda_z:\rset\to\rset$ is defined by
\begin{equation}
  \label{eq:def:lambdatilde}
\tilde\lambda_z(y_0)=\lambda(\chunk x{(-p+2)}0,\f[]{y_0}(z),\chunk
y{(-q+1)}{(-1)},y_0)\;.
\end{equation}
Observing that $\tilde\lambda_z$ is an affine function,
of the form $\tilde\lambda_z(y)=\vartheta_0+\vartheta y$, we can apply
\autoref{lem:support:bound:moment:pois} with $\zeta=x_0$ (and the trivial bound
$|\vartheta_0|\vee|\vartheta_0+\vartheta\zeta_+|\leq|\vartheta_0|\vee|\vartheta_0+\vartheta\zeta|$)
and obtain that
$$
\PE_z\left[\rme^{\tau|\lambda(Z_1)|}\right]\leq c\,\rme^{\tau\left(\left|\lambda\circ{\FLP[]{0}}(z)\right|\vee\left|\lambda\circ{\FLP[]{1}}(z)\right|\right)}
\leq c\,\left[\rme^{\tau\left|\lambda\circ{\FLP[]{0}}(z)\right|}+\rme^{\tau\left|\lambda\circ{\FLP[]{1}}(z)\right|}\right]\;,
$$
where we set $c=2\rme^{\tau\omega+b_1}$ and, for $w=0,1$ and all $z\in\Zset$, ${\FLP[]{w}}(z)$ is
defined by ${\FLP[]{w}}(z)=(\chunk x{(-p+2)}{0},x_1,\chunk y{(-q+2)}{-1},y_0)$
with
\begin{align*}
&x_{-p+k}=\Projarg kz\quad\text{for}\quad 1\leq k\leq p\;,\\
&y_{-q+k}=\Projarg {p+k}z\quad\text{for}\quad 1\leq k< q\;,\\
&u_0=w x_0\quad\text{and}\quad  u_{1-k}= \Upsilon(y_{1-k})\quad\text{for}\quad 1< k< q\;,\\
  &x_1=\sum_{k=1}^pa_k\,x_{1-k}+\sum_{k=1}^qb_k\,u_k\;.
\end{align*}
Defining for all $w=\chunk w0{n-1}\in\{0,1\}^n$,
${\FLP[]{w}}={\FLP[]{w_{n-1}}}\circ\dots{\FLP[]{w_0}}$, we get
\begin{equation}
  \label{eq:liapu:log:poi:intermediaire}
\PE_z[\VX(X_n)]\leq c^n\,\sum_{w\in\{0,1\}^n}\rme^{\tau\left|\Proj
    p\circ{\FLP[]{w}}(z)\right|}
\;.
\end{equation}
Now observe that,
for all $w=\chunk w{0}{n+q-1}\in\{0,1\}^{n+q}$, we can define
$\Proj p\circ{\FLP[]{\chunk w{0}{(n+q-1)}}}(z)$ as $x_{n+q}$ obtained by adding
to the previous recursive equations, for $1\leq k<n+q$,
\begin{align*}
  u_k=w_k x_k\quad\text{and}\quad   x_{k+1}=\sum_{j=1}^pa_j\,x_{k+1-j}+\sum_{j=1}^qb_j\,u_{k+1-j}\;.
\end{align*}
Note that, in this recursion, we can replace $u_{k+1-j}$ by
$w_{k+1-j}x_{k+1-j}$ for $k+1-j\geq0$, hence $\chunk xq{(n+q)}$ satisfies
the
recursion~(\ref{eq:log:pois:ergo:cond:rec:def})  and it follows that $\Proj
p\circ{\FLP[]{\chunk w{0}{(n+q-1)}}}(z)$ can be expressed as
$$
\flp[]{\chunk
  w{q}{(n+q-1)}}\left(\setvect{\Projarg{p-(\ell-q)_+}{\FLP[]{\chunk
        w{0}{(q-\ell)_+}}(z)}}{1\leq\ell\leq p\vee q}\right)\;.
$$
Hence Condition~(\ref{eq:log:poi:ergo:cond}) implies that, for all $z\in\Zset$,
$$
\lim_{n\to\infty}\sup\set{\left|\Proj p\circ{\FLP[]{w}(z)}\right|}{w\in\{0,1\}^n} = 0\;.
$$
By linearity of $z\mapsto\FLP[]{w}(z)$, it follows that
\begin{equation}
  \label{eq:ergo:lo:poi:2}
\lim_{n\to\infty}\sup\set{|z|_\infty^{-1}
  \left|\Proj p\circ{\FLP[]{w}(z)}\right|}{w\in\{0,1\}^n,\
  z\in\Zset\setminus\{0\}} = 0\;.
\end{equation}
Hence using~(\ref{eq:liapu:log:poi:intermediaire}), we finally obtain for a
positive sequence $\sequence{\rho}[n][\zsetp]$,
$$
\PE_z[\VX(X_n)]\leq (2c)^n\ \rme^{\tau\,\rho_n\, |z|_\infty}\;,\quad\text{with}\quad\lim_{n\to\infty}\rho_n=0\;,
$$
which, with~(\ref{eq:V:log:poi}), leads to, for all $z\in\Zset$ and $M>0$,
$$
\frac{\PE_z\left[\VX(X_n)\right]}{M+V(z)} \leq (2c)^n\
\min\left(M^{-1}\,\rme^{\tau\,\rho_n\, |z|_\infty},
  \left(2\rme^{\tau}\right)^{-1}\, \rme^{\tau\,(\rho_n-1)\, |z|_\infty}\right)\;.
$$
Let $C>0$ be arbitrarily chosen. Using the first term and the second term in
this min for $|z|_\infty\leq C$ and $|z|_\infty>C$ respectively, we get that, for any $n$ such that $\rho_n<1$,
\begin{align*}
\limsup_{M\to\infty}\sup_{z\in\Zset}\frac{\PE_z\left[\VX(X_n)\right]}{M+V(z)}
  \leq (2c)^n\   \left(2\rme^{\tau}\right)^{-1}\, \rme^{\tau\,(\rho_n-1)\,
  C}\to0\text{ as }C\to\infty\;.
\end{align*}
Using that $\sequence{\rho}[n][\zsetp]$ converges to 0, this holds for $n$
large enough and we
get~(\ref{eq:ergoVX}), and~\ref{assum:V:gen:X} holds.

We now turn to the proof of~\ref{assum:alpha-phi:gen}.
We can apply \autoref{lem:det:alpha:gen:od} with $\Cset=\rset=\Xset$ and
$\Sset=\{1\}$, $\mu$ being the Dirac mass at point 1. For all $(x,y)\in\Xset\times\Yset$, let $j(x)=\rme^{-\rme^x}$ and
$h(x;u)=\frac{\rme^{x(\rme^u-1)}}{(\rme^u-1)!}$, which
$h$ satisfies ~\ref{item:F1:gen}. Hence \autoref{lem:det:alpha:gen:od} gives
that~\ref{assum:alpha-phi:gen}\ref{item:alpha-phi:3:gen} and  \eqref{eq:def:phi} hold with
$$
\alpha(x,x')=\frac{\rme^{-\rme^{x\vee x'}}}{\rme^{-\rme^{x\wedge x'}}}
=\rme^{-\left|\rme^{x}-\rme^{x'}\right|}
\quad\text{and}\quad \phi(x,x')=x\wedge x'\;,\qquad x,x'\in\Xset\;.
$$
Now for all $x,x'\in\Xset$, we have
\begin{align*}
1-\alpha(x,x')=1-\rme^{-\left|\rme^{x}-\rme^{x'}\right|}\le \left|\rme^{x}-\rme^{x'}\right|
\leq  \rme^{|x|\vee|x'|}\left|x-x'\right|\;.
\end{align*}
We thus obtain~\ref{assum:alpha-phi:gen}\ref{assum:hyp:1-alpha:W:gen} by
setting $\WX(x,x')=\rme^{|x|\vee|x'|}$,
and~\ref{assum:alpha-phi:gen}\ref{assum:definition-gamma-x:gen} also follows by
setting $\WU(y)=\rme^{|y|}$. Since $\WU$ is $\VU$ with $\tau=1$, we already saw
that $G\WY(x)\leq2\rme^{1+x_+}$, hence $\vnorm[\WX]{\WY\circ\phi}\leq2\rme$,
and~(\ref{eq:Wbar:def}) leads to, for all $z$, $z'$ in $\Zset$,
\begin{equation}
  \label{eq:W:log:poi}
W(z,z')\geq
(2\rme)^{-1}\,\rme^{|z|_\infty\vee|z'|_\infty}\;.
\end{equation}
It now remains to prove either~\ref{assum:alpha-phi:gen}\ref{assum:driftCond:W:new:gen:od}
or~\ref{assum:alpha-phi:gen}\ref{assum:driftCond:W:new:gen:od:new}, which both involve $\hat\PE_z\left[\WX(X_n,X'_n)\right]$.
We proceed as
previously when we bounded $\PE_z\left[\rme^{\tau|\lambda(Z_1)|}\right]$. Let $\tau\geq1$.
For any linear function $\lambda:\Zset^2\to\Zset^2$, we have, for all $v=(z,z')\in\Zset^2$,
\begin{align*}
\hat\PE_v\left[\rme^{\tau|\lambda(Z_1)|\vee|\lambda(Z'_1)|}\right]
&\leq\hat\PE_v\left[\rme^{\tau|\lambda(Z_1)|}\right]+\hat\PE_z\left[\rme^{\tau|\lambda(Z'_1)|}\right]\\
&=\PE\left[\rme^{\tau\left|\tilde\lambda_z(\ln(1+V))\right|}\right]+\PE\left[\rme^{\tau\left|\tilde\lambda_{z'}(\ln(1+V))\right|}\right]\;,
\end{align*}
where $V\sim\mathcal{P}(\rme^{\phi\circ\Proj p^{\otimes2}(v)})$ and $\tilde\lambda_z$ is defined
by~(\ref{eq:def:lambdatilde}). By definition of $\phi$ above, we have
$\phi\circ\Proj p^{\otimes2}(v)\leq\Projarg{p}{z},\Projarg{p}{z'}$.
Hence \autoref{lem:support:bound:moment:pois} with $\zeta=\phi\circ\Proj
p^{\otimes2}(v)$ and $\zeta'=\Projarg{p}{z}$ and $\Projarg{p}{z'}$
successively, we obtain, similarly as before for bounding
$\PE_z\left[\rme^{\tau|\lambda(Z_1)|}\right]$, that for all $v=(z,z')\in\Zset^2$,
\begin{align}
  \nonumber
\hat\PE_v\left[\rme^{\tau(|\lambda(Z_1)|\vee|\lambda(Z'_1)|)}\right]
&\leq
  c\,\sum_{z''=z,z'}\sum_{w=0,1}\rme^{\tau\left|\lambda\circ{\FLP[]{w}}(z'')\right|}\\
  \label{eq:Wbound-avec-tau}
&  \leq
  2c\,\sum_{w=0,1}\rme^{\tau(\left|\lambda\circ{\FLP[]{w}}(z)\right|\vee\left|\lambda\circ{\FLP[]{w}}(z')\right|)}\;.
\end{align}
Taking  $\lambda=\Proj p$, and observing that $\Proj p\circ{\FLP[]{w}}$ is a
linear form for $w=0,1$, we get, setting $c_0=\max_{w=0,1}\sup_{|z|_\infty\leq1}|\Proj p\circ{\FLP[]{w}}(z)|$,  for all $v=(z,z')\in\Zset^2$,
\begin{equation}
  \label{eq:avec-tau-W-cond1}
\hat\PE_v\left[\WX^\tau(X_1,X'_1)\right]
\leq 4c\,\rme^{\tau\,c_0\,|v|_\infty}\text{ with }|v|_\infty:=|z|_\infty\vee|z'|_\infty\;.
\end{equation}
Thus, with~(\ref{eq:W:log:poi}), the second condition
of~\ref{assum:alpha-phi:gen}\ref{assum:driftCond:W:new:gen:od:new} holds with
$\tau'=\tau(c_0\vee1)$. To conclude, it is now sufficient to show that the first condition
of~\ref{assum:alpha-phi:gen}\ref{assum:driftCond:W:new:gen:od:new} also holds.
Iterating~(\ref{eq:Wbound-avec-tau}) and taking $\tau=1$ and $\lambda=\Proj p$, we thus get, for all $n\in\zsetp$ and $v=(z,z')\in\Zset^2$,
\begin{align*}
\hat\PE_v\left[\WX(X_n,X'_n)\right]&=\hat\PE_v\left[\rme^{|\lambda(Z_n)|\vee|\lambda(Z'_n)|}\right]\\
&\leq
  (4c)^n\,\max\set{\rme^{\left|\Proj p\circ{\FLP[]{w}}(z)\right|\vee\left|\Proj
        p\circ{\FLP[]{w}}(z')\right|}}{w\in\{0,1\}^n}\;.
\end{align*}
Applying~(\ref{eq:ergo:lo:poi:2}) and~(\ref{eq:W:log:poi}), we get that, for all  $v\in\Zset^2$,
$$
\frac{\hat\PE_v\left[\WX(X_n,X'_n)\right]}{M+W(v)}\leq (4c)^n\,\min
\left\{\frac{\rme^{\rho_n\,|v|_\infty}}{M},(2\rme)\,\rme^{(\rho_n-1)\,|v|_\infty}\right\}\;,
$$
where $\sequence{\rho}[n][\zsetp]$ is a positive sequence converging to 0.  We
now proceed as for proving~\ref{assum:V:gen:X} previously: the first term in the min
tends to 0 as $M\to\infty$ uniformly over $|v|_\infty\leq C$ for any $C>0$,
while, if $\rho_n<1$, the second one tends to zero as
$|v|_\infty\to\infty$. Hence, for $n$ large enough, we have
$$
\lim_{M\to\infty}\sup_{v\in\Zset^2}\frac{\hat\PE_v\left[\WX(X_n,X'_n)\right]}{M+W(v)}=0\;,
$$
and ~\ref{assum:alpha-phi:gen}\ref{assum:driftCond:W:new:gen:od:new} follows,
which concludes the proof.
\end{proof}
\begin{proof}[Proof of~\ref{item:thm:logpois:strong:consistency:gen:od}]
We apply \autoref{thm:convergence-main:gen:od}.
We have already
shown that~\ref{assum:bound:rho:gen},~\ref{assum:gen:identif:unique:pi:gen} and~\ref{ass:21-lyapunov:gen:od}
hold in the proof of Assertion~\ref{item:thm:logpois:ergodic:gen:od}, with $\VX(x)=\rme^{\tau|x|}$ for
any $\tau>0$.   Assumptions \ref{assum:continuity:Y:g-theta:gen:od} and
\ref{assum:continuity:Y:phi-theta:gen:od} obviously hold for the log-linear
Poisson GARCH model (see \autoref{rem:linear-case:conv:assump} for the second one). It now only remains
to show that, using $\VX$ as above, \ref{assum:practical-cond:O:gen:od} is also
satisfied. We set $\Xset_1=\Xset$ which trivially satisfies
\ref{assum:practical-cond:O:gen:od}\ref{assum:exist:practical-cond:subX:gen:od}.
Condition~\ref{assum:practical-cond:O:gen:od}\ref{assum:momentCond:gen:od} is
also immediate (see \autoref{rem:discret-case:conv:assump}).
Next, we look for an adequate $\bar\phi$, $\mathrm{h}$ and $C\geq0$ so
that~\ref{item:barphi:bar:phi:gen:od}~\ref{item:ineq:loggg:gen:od}~\ref{item:H:gen:od}
and~\ref{item:22or23-barphi-V:gen:od} hold in~\ref{assum:practical-cond:O:gen:od}.
We have, for all $\theta\in\Theta$ and $(x,x',y)\in\rset^2\times\zsetp$,
\begin{align*}
  \left|\ln g^\theta\left(x;y\right)-\ln g^\theta\left(x';y\right)\right|
  &    \leq |x-x'|\,\rme^{x\vee x'}\,y \\
  &\leq |x-x'|\,\rme^{\left|x-\initmlex_1\right|\vee\left|x'-\initmlex_1\right|}\,y\,\rme^{\initmlex_1}\;.
\end{align*}
We thus set $\mathrm{h}(u)=\rme^{\initmlex_1}\,u$, $C=1$ and $\bar\phi(y)=A+B\,y$ for
some adequate non-negative $A$ and $B$
so that~\ref{item:barphi:bar:phi:gen:od} (using
\autoref{rem:linear-case:conv:assump} and $\Upsilon(y)=\ln(1+y)\leq y$),  \ref{item:ineq:loggg:gen:od}
and~\ref{item:H:gen:od} follow. Then we have $G^\theta\bar\phi(x)\leq
A+B\,\rme^x\lesssim\VX$, provided that we chose $\tau\geq1$. This
gives~\ref{item:22or23-barphi-V:gen:od} and
thus~\ref{assum:practical-cond:O:gen:od}  holds true, which concludes the proof.
\end{proof}
\begin{proof}[Proof of~\ref{item:thm:logpois:ident}]
  We apply \cite[Theorem~17]{douc-roueff-sim_ident-genod2020} in which:
  \begin{enumerate}[label=-]
  \item Condition (A-1)
  corresponds to our assumption~\ref{assum:gen:identif:unique:pi:gen} shown in
  Point~\ref{item:thm:logpois:ergodic:gen:od} above;
\item Condition (L-3) corresponds to checking that
    \begin{equation}
    \label{eq:identif:abitofmoment:logpoisson}
\int \ln^+ (\ln(1+y)) \
  \piY^\theta(\rmd y)<\infty\;.
  \end{equation}
  We already checked that
  $G^\theta\VY\lesssim\VX$ with $\VY(y)=(1+y)^{\tau}$, implying
  $\int(1+y)^{\tau}\piY^\theta(\rmd
  y)<\infty$. Hence~(\ref{eq:identif:abitofmoment:logpoisson}) holds.
\item Condition (SL-1)  is the same as our
  Condition~\ref{item:lodm-ident-cond-cond-dens-def}, which were already proved
  for showing  Point~\ref{item:thm:logpois:ergodic:gen:od} above;
\item Conditions (SL-2) and (SL-3) are immediately checked for the Log Poisson Garch model;
\item Condition (SL-4) is directly assumed in~\ref{item:thm:logpois:ident};
\end{enumerate}
Thus  \cite[Theorem~17]{douc-roueff-sim_ident-genod2020}  gives that
  $[\thv]$ reduced to $\{\thv\}$ and Assertion~\ref{item:thm:logpois:ident}
  follows from~\ref{item:thm:logpois:strong:consistency:gen:od}.
\end{proof}

\subsection{Proof of \autoref{theo:ergo-convergence:garch:gen:nbin}}
\label{sec:proof-autor-conv:nbin}

We prove \ref{item:thm:nbin:ergo:gen:od},~\ref{item:thm:nbin:strong:consistency:gen:od}
and~\ref{item:thm:nbin:ident} successively. We take $\nu$ is the counting
measure on $\zsetp$ so that we have
\begin{equation}\label{eq:density:g:pos:gen:nbin}
g^\theta(x;y)=\frac{\Gamma(r+y)}{y\,!\,\Gamma(r)}\left(\frac{1}{1+x}\right)^r\left(\frac{x}{1+x}\right)^y\eqsp.
\end{equation}

\begin{proof}[Proof of~\ref{item:thm:nbin:ergo:gen:od}]
  As for the Log-linear Poisson Garch$(p,q)$, we apply
  \autoref{thm:ergodicity:gen} this time with $\VX(x)=x$. Note that, since
  $a_k,b_k\geq0$ for all $k$, Condition~(\ref{eq:con:ergo:gen:nbin:cns})
  implies $\sum_{k=1}^pa_k<1$, which
  implies~\ref{item:lodm-ident-cond-cond-dens-def} and
  thus~\ref{assum:bound:rho:gen} by \autoref{rem:ergo:ass}\ref{item:ergo:ass:1}.
  Also as for the log-linear Poisson Garch$(p,q)$ case,
  \autoref{rem:ergo:ass}\ref{item:rem:uniform-dom-conv}
  gives~\ref{assum:weak:feller:X},
  and~\ref{assum:reaching:mild}\ref{assum:reaching:atom} is trivially
  satisfied.  Hence, again, we only have to show that \ref{assum:V:gen:X},
  and~\ref{assum:alpha-phi:gen} hold.  Here $\Upsilon$ is the identity mapping.

We start with~\ref{assum:V:gen:X}, with $\VX(x)=x$. We can
further set $\VY(y)=y$ since, we then have
$G\VY(x)=r\VX(x)$ so that
$\vnorm[\VX]{G\VY}=r$. With these definitions,~(\ref{eq:VVX:def})
leads to
\begin{equation}
  \label{eq:V:nbin}
V(z)\geq\left(1+r\right)^{-1}\,|z|_\infty,\qquad z\in\Zset \;.
\end{equation}
On the other hand, we have that, for all $z=(\chunk x{(-p+1)}0,\chunk
u{(-q+1)}{(-1)}\in\Zset=\rsetp^{p+q-1}$ and all $n=1,2,\dots$,
$$
\PE_z[\VX(X_{n})]=\PE_z[X_{n}]=\omega+\sum_{k=1}^pa_k\PE_z[X_{n-k}]+\sum_{k=1}^qb_k\PE_z[Y_{n-k}]\;,
$$
where, in the right-hand side of this equation, if $n-k\leq0$, $\PE_z[X_{n-k}]$
should be replaced by $x_{n-k}$ and if $n-k\leq -1$, $\PE_z[Y_{n-k}]$
should be replaced by $u_{n-k}$. By definition of $G$,
$\PE_z[Y_{n-k}]=r\PE_z[X_{n-k}]$. Hence, denoting
$x_n=\PE_z[X_{n}]$ we get that the sequence $\nsequence{x_k}{k \in\zsetp}$ satisfies
the recursion
$$
x_n=\omega+\sum_{k=1}^{p\vee q}c_kx_{n-k}\;,\quad n> p\vee q\;,
$$
where here $c_k=a_k+r\ b_k\geq0$ for $k=1,\dots,p\wedge q$, $c_k=a_k$ if
$q<k\leq p$ and $c_k=rb_k$ if $p<k\leq q$. Moreover we can clearly
find a constant $C$ only depending on $\theta$ such that 
$\left|\chunk x1{(p\vee q)}\right|_\infty\leq C(1+|z|_\infty)$. Then
applying \autoref{lem:affine-rec}  and~(\ref{eq:V:nbin}), we get that
there exists $C'>0$ and $\rho\in(0,1)$ such that, for all $n\geq1$,
$$
\PE_z[X_{n+1}]\leq C'\,\rho^n\,(1+V(z))\;.
$$
Condition~\ref{assum:V:gen:X} follows.

We conclude with the proof of~\ref{assum:alpha-phi:gen}.
Let us apply \autoref{lem:det:alpha:gen:od} with $\Cset=(0,\infty)=\Xset$ and
$\Sset=\{1\}$, $\mu$ being the Dirac mass at point 1, $j(x)=(1+x)^{-r}$ and
$h(x;y)=\frac{\Gamma(r+y)}{y\,!\,\Gamma(r)}\left(\frac{x}{1+x}\right)^y$, which
satisfies~\ref{item:F1:gen}. This leads to
$\alpha$ and $\phi$
satisfying~\ref{assum:alpha-phi:gen}\ref{item:alpha-phi:3:gen} and \eqref{eq:def:phi} by setting, for all $x,  x'\in\rsetp$
$$
\alpha(x,x')=\left(\frac{1+x\wedge x'}{1+x\vee x'}\right)^r
\quad\text{and}\quad
\phi(x,x')=x\wedge x'\eqsp.
$$
Next, for all $(x, x')\in\Zset^2$, we have
\begin{align*}
1-\alpha(x,x') =1-\left(\frac{1+x\wedge x'}{1+x\vee x'}\right)^r \leq (1\vee r)\left|x-x'\right|
\end{align*}
We thus obtain~\ref{assum:alpha-phi:gen}\ref{assum:hyp:1-alpha:W:gen} by
setting $\WX(x,x')=(1\vee r)$. All the other
conditions of~\ref{assum:alpha-phi:gen} are trivially satisfied in this case,
taking $\WU\equiv1$.
\end{proof}
\begin{proof}[Proof of~\ref{item:thm:nbin:strong:consistency:gen:od}]
  We apply \autoref{thm:convergence-main:gen:od}.  We have already shown
  that~\ref{assum:bound:rho:gen},~\ref{assum:gen:identif:unique:pi:gen}
  and~\ref{ass:21-lyapunov:gen:od} hold in the proof of
  Assertion~\ref{item:thm:logpois:ergodic:gen:od}, with $\VX(x)=x$.  Assumption
  \ref{assum:continuity:Y:g-theta:gen:od} follows
  from~(\ref{eq:density:g:pos:gen:nbin}), and
  \ref{assum:continuity:Y:phi-theta:gen:od}
  from \autoref{rem:linear-case:conv:assump}. It now only remains to show that,
  using $\VX$ as above, \ref{assum:practical-cond:O:gen:od} is also satisfied.

  Since $\Theta$ is compact, we can find $\underline{\omega}>0$ and
  $\overline{r}\in\zsetp$ such that $\omega\geq\underline{\omega}$ and
  $r\leq\overline{r}$ for all
  $\theta=(\omega,\chunk{a}{1}{p},\chunk{b}{1}{q},r)\in\Theta$.
  We set $\Xset_1=[\underline{\omega},\infty)$ which then satisfies
\ref{assum:practical-cond:O:gen:od}\ref{assum:exist:practical-cond:subX:gen:od}.
Condition~\ref{assum:practical-cond:O:gen:od}\ref{assum:momentCond:gen:od}
follows from \autoref{rem:discret-case:conv:assump}.

Next, we look for an adequate $\bar\phi$, $\mathrm{h}$ and $C\geq0$ so
that~\ref{item:barphi:bar:phi:gen:od}~\ref{item:ineq:loggg:gen:od}~\ref{item:H:gen:od}
and~\ref{item:22or23-barphi-V:gen:od} hold in~\ref{assum:practical-cond:O:gen:od}.
For all $\theta\in\Theta$, $(x,x')\in\Xset_1$ and $y\in\zsetp$, we have
\begin{align*}
\left|\ln g^\theta (x;y)-\ln g^\theta (x';y)\right|
&=\left|(r+y)[\ln(1+x')-\ln(1+x)]+y[\ln x-\ln x']\right|\\
&\leq \left[(r+y)(1+\underline \omega)^{-1}+y\,\underline
  \omega^{-1}\right]\;|x-x'|\\
&\leq \left[\overline r +2y\,  \underline{\omega}^{-1}\right]\;|x-x'| \;.
\end{align*}
We set $C=0$, $\mathrm{h}(s)=s$ and $\bar\phi(y)=A+B\,y$ for
some adequate non-negative $A$ and $B$
so that~\ref{item:barphi:bar:phi:gen:od} (using
\autoref{rem:linear-case:conv:assump} and $\Upsilon(y)= y$), \ref{item:ineq:loggg:gen:od}
and~\ref{item:H:gen:od} follow. Then we have $G^\theta\ln^+\bar\phi(x)\leq
A+B\,r\,x\lesssim\VX$. This
gives~\ref{item:22or23-barphi-V:gen:od} and
thus~\ref{assum:practical-cond:O:gen:od}  holds true, which concludes the proof.
\end{proof}
\begin{proof}[Proof of~\ref{item:thm:nbin:ident}]
  The proof of this point is similar to the proof
  of~\autoref{theo:ergo-convergence:logpois:gen}\ref{item:thm:nbin:ident} in
  \autoref{sec:proof-autor-conv:logpoi}, except
  that Condition~(\ref{eq:identif:abitofmoment:logpoisson}) is replaced by
  \begin{equation}
    \label{eq:identif:abitofmoment:nbin}
\int \ln^+ (|y|) \
  \piY^\theta(\rmd y)<\infty\;.
  \end{equation}
We already checked that
  $G^\theta\VY\lesssim\VX$ with $\VY(y)=y$, implying
  $\int y\,\piY^\theta(\rmd y)<\infty$. Hence~(\ref{eq:identif:abitofmoment:nbin}) holds
  and the proof is concluded.
\end{proof}
\subsection{Proof of \autoref{theo:ergo-convergence:parx}}
\label{sec:proof-autor-conv:parx}
The following remark about Assumption~\ref{item:assum:parx:vi} will be
useful.
\begin{remark}
\label{rem:assumption:tv} If the Markov kernel
$L$ admits a kernel density $\ell$ with respect to some measure
$\nu_\X$ on Borel sets of $\rset^r$ as in~\ref{hyp:parx-model-L-ell},
then
\begin{align*}
\tvdist{L(\xi,\cdot)}{L(\xi',\cdot)}& = \frac12
\int\left|\ell(\xi;\xi'')-\ell(\xi';\xi'')\right|\;\nu_\X(\rmd\xi'')\\
&=1-\int\left(\ell(\xi;\xi'')\wedge \ell(\xi';\xi'')\right) \;\nu_\X(\rmd\xi'')\;.  
\end{align*}
It follows that Assumption~\ref{item:assum:parx:vi} is
  equivalent to assuming that there exist a measurable function
$\alpha_\X: \rset^{2r} \to [0,1]$ and a Markov kernel $\underline{L}$
on $(\rset^r)^2 \times \mathcal{B}(\rset^r)$ dominated by $\nu_\X$
with kernel density $\underh\left(\xi,\xi';\xi''\right)$ such that
             \begin{enumerate}[(a)]
             \item for all $(\xi,\xi',\xi'')\in \rset^{3r}$,
               $$
               \ell(\xi;\xi'')\wedge \ell(\xi';\xi'') \geq \alpha_\X(\xi,\xi')\underh\left(\xi,\xi';\xi''\right)\eqsp.
             $$
             \item \label{item:assum:parx:vi:b} there exists a constant $M\geq 1$ such that for all $(\xi,\xi')\in \rset^{2r}$,
             $$
             1-\alpha_\X(\xi,\xi')\leq M \|\xi-\xi'\| \eqsp.
             $$
             \end{enumerate}
             Obviously these two conditions
             imply~\ref{item:assum:parx:vi}. To see the converse
             implication, take
             $\alpha_\X(\xi,\xi')=\int_{\rset^r}\left( \ell(\xi;\xi'')
               \wedge \ell(\xi';\xi'') \right)\;\nu_\X(\rmd \xi'')$
             and
             $\underh\left(\xi,\xi';\xi''\right)=\left(\ell(\xi;\xi'')
               \wedge \ell(\xi';\xi'')\right)/\alpha_\X(\xi,\xi')$.
\end{remark}
\begin{proof}[Proof of~\ref{item:thm:parx:ergo:gen:od}]
  As explained in \autoref{exmpl:parx}, the PARX model can be cast
  into a VLODM$(p,q)$ and, to prove ergodicity, we can thus apply
  \autoref{thm:ergodicity:gen}. Hence we now need to check
  Conditions~\ref{item:CCMShyp}, \ref{assum:bound:rho:gen},
  \ref{assum:weak:feller:X}, \ref{assum:V:gen:X},
  \ref{assum:reaching:mild} and~\ref{assum:alpha-phi:gen}.
  \ref{item:CCMShyp} always hold for a VLODM. \ref{assum:bound:rho:gen} follows
from~(\ref{eq:cond:ergo-convergence:parx:one}) by
Assertion~\ref{rem:assum:parx:i-1} in~\autoref{lem:assum:parx:i}.

Let us check~\ref{assum:weak:feller:X}, by first verifying
\eqref{eq:uniform-dom-conv}.  Using the definition of  the 
kernel $G$ (we drop
$\theta$ from the notation $G^\theta$ since it does not depend on
$\theta$) in  \autoref{exmpl:parx}~\ref{item:exmpl:parx:G}, and
using~\ref{hyp:parx-model-L-ell}, $G$ admits a kernel density $g$ wrt
$\nu\eqdef \nu_{\zsetp}\otimes \nu_\X$, with $\nu_{\zsetp}$ denoting
the counting measure on $\zsetp$, given by
 \begin{equation} \label{eq:g:parx}
 g( \bar x;\bar y)=\rme^{-x} \frac{x^y}{y!} \ell(\xi;\xi')\,, \quad (\bar x,\bar y)=((x,\xi),(y,\xi')) \in \bar \Xset\times \bar \Yset\eqsp.
\end{equation}
We set, for  $(\bar x,\bar x')=((x,\xi),(x',\xi')) \in \bar \Xset^2$
\begin{equation}
  \label{eq:parx:def:delta}
  \Xmet(\bar x , \bar x')=|x-x'|+\|\xi-\xi'\| \eqsp.
\end{equation}
According
to assumption \ref{hyp:parx-model-L-ell}\ref{eq:assum:parx:ii}, there
exists $\delta>0$ such that
$\int_{\rset^r} \sup_{\|\xi'-\xi\| \leq \delta} h
(\xi';\xi'')\nu_\X(\rmd \xi'')<\infty$. Thus, using the expression of
$g$ given in \eqref{eq:g:parx},
\begin{align*}
  &      \sum_{y=0}^\infty \int_{\rset^r} \sup \left\{g( (x',\xi');(y,\xi''))\,,\; \Xmet((x',\xi'),(x,\xi)) \leq \delta\right\} \nu_\X(\rmd \xi'')\\
  &\quad \leq \sum_{y=0}^\infty \rme^{-x+\delta} \frac{(x+\delta)^y}{y!} \int_{\rset^r} \sup_{\|\xi'-\xi\|\leq\delta}\{h(\xi';\xi'')\} \nu_\X(\rmd \xi'') \eqsp,\\
&\quad \leq \rme^{2\delta}  \int_{\rset^r} \sup_{\|\xi'-\xi\|\leq\delta}\{h(\xi';\xi'')\} \nu_\X(\rmd \xi'') <\infty\eqsp,
\end{align*}
which shows \eqref{eq:uniform-dom-conv}. Since $L$ is weak-Feller by~\ref{hyp:parx-model-wf},
\ref{assum:weak:feller:X}-\ref{assum:feller:cont} holds, and finally
\ref{assum:weak:feller:X} is satisfied.

We now turn to checking~\ref{assum:V:gen:X}. Define
\begin{align*}
  &\VY(y,\xi)=\VU \circ \Upsilon(y,\xi)=\VU(y,f_1(\xi),\ldots,f_d(\xi),\xi)=y+V_\X(\xi)\eqsp,\\
  &\VX(x,\xi)=x+V_\X(\xi)\eqsp.
\end{align*}
Then, using \ref{item:assum:parx:v}-\ref{item:assum:parx:v:d} with
$n=1$ and $h=V_\X$,
$$
\frac{G\VY}{\VX}(x,\xi)=\frac{x+LV_\X(\xi)}{x+V_\X(\xi)}\leq \frac{x+C\varrho V_\X(\xi) +\pi(V_\X)}{x+V_\X(\xi)} \leq 1\vee (C\varrho) + \pi(V_\X)\eqsp,
$$
so that $G\VY\lesssim\VX$. Moreover, $\{\VX\leq M\}$ is compact for
any $M>0$, and so is $\{\VY\leq M\}$ since $\{V_\X\leq M\}$ is compact
for any $M>0$ by \ref{item:assum:parx:v}-\ref{item:assum:parx:v:b}. To
complete the proof of~\ref{assum:V:gen:X}, it only remains to check
\eqref{eq:ergoVX}. Recall that in this model,
$\bar U_t=(Y_t,f_1(\Xi_t),\ldots,f_d(\Xi_{t}),\Xi_t)$. Noting that for
all $z\in \Zset=\bar \Xset^p\times \bar \Uset^{q-1}$,
$\PE_z[Y_t]=\PE_z[X_t]$ (see \eqref{eq:def:parx}), we have, using~\ref{item:assum:parx:v},
\begin{align*}\nonumber
  \PE_z[X_n]&= \PE_z\lrb{\omega+\sum_{i=1}^{p}a_i X_{n-i}+\sum_{i=1}^q b_i
    Y_{n-i}}+L^n (\func_\gamma) (\Xi_{-1}) \\
    \nonumber  &\leq \omega + \sum_{i=1}^{p}a_i
    \PE_z[X_{n-i}]+\sum_{i=1}^{q}b_i \PE_z[X_{n-i}]\\
&  \quad +C\varrho^n
    \vnorm[V_\X]{\func_\gamma} V_\X(\Xi_{-1})+\pi_\X(\func_\gamma)\;.
\end{align*}
Defining $c_i$ as in Assertion~\ref{rem:assum:parx:i-2} of~\autoref{lem:assum:parx:i}, we write this inequality as
\begin{align}
  \label{eq:A6:parx:one}
  \PE_z[X_n] \leq \omega + \sum_{i=1}^{p\vee q} c_i \PE_z[X_{n-i}] +C\varrho^n \vnorm[V_\X]{\func_\gamma} \VU(\Pi_{p+q}(z))+\pi_\X(\func_\gamma) \eqsp.
\end{align}
Similarly, we have
\begin{align}
  \PE_z[\VX(X_n,\Xi_{n-1})]=\PE_z[X_n]+L^n(V_\X)(\Xi_{-1}) \nonumber \\
  \quad\leq \PE_z[X_n] +C\varrho^n  \VU(\Pi_{p+q}(z))+\pi_\X(V_\X) \label{eq:A6:parx:two} \eqsp.
\end{align}
Combining \eqref{eq:A6:parx:one} and \eqref{eq:A6:parx:two} yields
that, for any $\epsilon>0$, we can find $M$ large enough such that,
with $V$ defined by~(\ref{eq:VVX:def}), and setting
$$
u_n:=\sup_{z\in\Zset}\frac{\PE_z[\VX(X_n,\Xi_{n-1})]}{M+V(z)}\;,\quad n\in\zsetp\;,
$$
we have, for all $n\in\zsetpnz$, $u_n\in\rsetp$ and
\begin{align*}
u_n\leq \sum_{i=1}^{p\vee q} c_i u_{n-i} +C\varrho^n\vnorm[\VX]{G\VY}+\frac12\epsilon\;.
\end{align*}
Thus, since $\varrho<1$, for $n$ large enough,
\begin{align*}
u_n\leq \sum_{i=1}^{p\vee q} c_i u_{n-i} +\epsilon\;.
\end{align*}
It follows that, for any $\epsilon>0$,
$$
\limsup_{n\to\infty}\lim_{M\to\infty}\sup_{z\in\Zset}\frac{\PE_z[\VX(X_n,\Xi_{n-1})]}{M+V(z)}\leq \limsup_{n\to\infty}u_n\leq \frac{\epsilon}{1-\sum_{i=1}^{p\vee q}c_i}\;,
$$
where, in the second inequality, we used \autoref{lem:affine-rec} with
Assertion~\ref{rem:assum:parx:i-2} of~\autoref{lem:assum:parx:i}. We
thus get~\eqref{eq:ergoVX}.

Let us now check \ref{assum:reaching:mild}.
From~\ref{hyp:parx-model-L-ell}~\ref{eq:assum:parx:i} and
\eqref{eq:g:parx}, we have \eqref{eq:ass:g}. Since
\ref{assum:reaching:mild}-\ref{assum:reaching:cont} always holds for a
VLODM, we obtain \ref{assum:reaching:mild}.

To complete the proof, it remains to check~\ref{assum:alpha-phi:gen}.
In what follows, we set for
$(\bar x,\bar x')=((x,\xi),(x',\xi')) \in\bar\Xset^2$ and
$\bar y=(y,\xi'') \in\bar \Yset$,
\begin{equation} \label{eq:parx:def:alpha:phi}
  \alpha(\bar x,\bar x')=\rme^{-|x-x'|}  \alpha_\X(\xi,\xi'), \quad  \underg(\bar x,\bar x';\bar y)=\rme^{-x\wedge x'}\frac{(x \wedge x')^y}{y!} \underh(\xi,\xi';\xi'')\;,
\end{equation}
where $\alpha_\X$ is given by \autoref{rem:assumption:tv} and take for
$\WX: \bar \Xset^2 \to [1,\infty)$ and $\WU:\bar \Uset\to\rsetp$ the
constant functions equal to $M$, where $M$ is the constant that
appears in Assumption \ref{item:assum:parx:vi} and
\autoref{rem:assumption:tv}\ref{item:assum:parx:vi:b}.

Using \eqref{eq:g:parx},  we have, for all  $(\bar x,\bar x')=((x,\xi),(x',\xi')) \in \bar \Xset^2$ and $\bar y=(y,\xi'') \in \bar \Yset$,
\begin{align*}
  & \min\{g(\bar x;\bar y), g(\bar x';\bar y) \} \geq \rme^{-x\vee x'} \frac{(x \wedge x')^y}{y!} h(\xi;\xi'') \wedge h(\xi';\xi'') \\
  & \quad \geq \rme^{-|x-x'|} \rme^{-x\wedge x'}\frac{(x \wedge x')^y}{y!} \alpha_\X(\xi,\xi') \underh(\xi,\xi';\xi'')\\
  & \quad \geq \alpha(\bar x,\bar x') \underg(\bar x,\bar x';\bar y) \eqsp,
\end{align*}
where $\alpha$ and $\underg$ are defined in \eqref{eq:parx:def:alpha:phi}. Then \ref{assum:alpha-phi:gen}-\ref{item:alpha-phi:3:gen} holds. Moreover, since $\WX$ and $\WU$ are constants,   \ref{assum:alpha-phi:gen}-\ref{assum:definition-gamma-x:gen} trivially holds. We now turn to  \ref{assum:alpha-phi:gen}-\ref{assum:hyp:1-alpha:W:gen}. By \eqref{eq:parx:def:alpha:phi},  for all $(\bar x,\bar x')=((x,\xi),(x',\xi')) \in \bar \Xset^2$,
\begin{align}
  1-\alpha(\bar x,\bar x')&=1-\rme^{-|x-x'|}  \alpha_\X(\xi,\xi')\leq 1+\rme^{-|x-x'|}(M \|\xi-\xi'\|-1) \nonumber \\
                          & \leq 1-\rme^{-|x-x'|} + \rme^{-|x-x'|} M \|\xi-\xi'\| \leq M (|x-x'|+\|\xi-\xi'\|) \eqsp, \label{eq:parx:alpha}
\end{align}
where in the last inequality we have used
$ 1-\rme^{-|x-x'|}\leq |x-x'| \leq M|x-x'|$. Plugging the definition
of $\Xmet$ given in \eqref{eq:parx:def:delta} into
\eqref{eq:parx:alpha} shows
\ref{assum:alpha-phi:gen}-\ref{assum:hyp:1-alpha:W:gen}. Since $\WX$
and $\WU$ are constants, we obtain that $W$ defined in
\ref{assum:alpha-phi:gen} is also constant and therefore any of the
two conditions
\ref{assum:alpha-phi:gen}-\ref{assum:driftCond:W:new:gen:od} or
\ref{assum:alpha-phi:gen}-\ref{assum:driftCond:W:new:gen:od:new}
trivially holds.

We have thus checked all the assumptions of
\autoref{thm:ergodicity:gen} for the VLODM of \autoref{exmpl:parx} and
this concludes the proof of
Assertion~\ref{item:thm:parx:ergo:gen:od}. 
\end{proof}
\begin{proof}[Proof of~\ref{item:thm:parx:equiv:conv:mle:gen:od}]
  We apply \autoref{thm:convergence-main:gen:od}.  We have already shown
  that~\ref{assum:bound:rho:gen},~\ref{assum:gen:identif:unique:pi:gen}
  and~\ref{ass:21-lyapunov:gen:od} hold in the proof of
  Assertion~\ref{item:thm:logpois:ergodic:gen:od}, with
  $\VX(x,\xi)=x+V_\X(\xi)$.
  Using~(\ref{eq:g:parx}) and the assumption~\ref{assum:thm:parx:equiv:conv:mle:gen:od:L}\ref{assum:thm:parx:equiv:conv:mle:gen:od-1}, we
  get~\ref{assum:continuity:Y:g-theta:gen:od} (here $g$ does not
  depend on
  $\theta$). Assumption~\ref{assum:continuity:Y:phi-theta:gen:od}
  always holds for a VLODM.

  To conclude the proof of~\ref{item:thm:parx:equiv:conv:mle:gen:od},
  it thus remains to check~\ref{assum:practical-cond:O:gen:od}.
  Since $\Theta$ is compact, we can find $\underline{\omega}>0$ such that $\omega\geq\underline{\omega}$ for all
  $\theta=(\omega,\chunk{a}{1}{p},\chunk{b}{1}{q},\chunk{\gamma}{1}{d})\in\Theta$. Then
  we set $\bar\Xset_1=[\underline{\omega},\infty)\times\rset^r$ which
  is a closed subset of $\bar\Xset$ and for which
  \ref{assum:practical-cond:O:gen:od}\ref{assum:exist:practical-cond:subX:gen:od}
  holds (with $\Yset$ replaced by $\bar\Yset$ and $\Xset_1$ by
  $\bar\Xset_1$ since we deal with \autoref{exmpl:parx}).
  \ref{assum:practical-cond:O:gen:od}\ref{assum:momentCond:gen:od}
  holds as an immediate consequence of~(\ref{eq:g:parx})
  and~\ref{assum:thm:parx:equiv:conv:mle:gen:od:L}\ref{assum:thm:parx:equiv:conv:mle:gen:od-2}. In
  the VLODM case, to meet
  \ref{assum:practical-cond:O:gen:od}\ref{item:barphi:bar:phi:gen:od},
  it suffices to have
  $$
  \bar\phi(y,\xi)\geq c_0 (1+y+\|\xi\|)\;,\quad y\in\zsetp\,,\;\xi\in\rset^r\;,
  $$
  where $c_0$ is a positive constant only depending on $\initmle$. In
  the following, we choose
  \begin{equation}
    \label{eq:parx:barphi-chosen}
    \bar\phi(y,\xi):= \left(1\vee c_0\right) \,\left(1+\left(1\vee\underline{\omega}^{-1}\right)\,y+\bar\phi_{\X}(\xi)\right)\;,\quad y\in\zsetp\,,\;\xi\in\rset^r\;,
  \end{equation}
  which,
  with~\ref{assum:thm:parx:equiv:conv:mle:gen:od:L}\ref{assum:thm:parx:barphi:bar:phi:gen:od},
  guaranties that the previous inequality holds, and thus
  \ref{assum:practical-cond:O:gen:od}\ref{item:barphi:bar:phi:gen:od}
  is verified.

  Let $\bar x,\bar x'\in\bar\Xset_1$ and $\bar y\in \bar\Yset$, that is $\bar x=(x,\xi)$ and
  $\bar x'=(x',\xi')$ with $x,x'\in[\underline{\omega},\infty)$ and $\xi,\xi'\in\rset^r$,, and
  $\bar y=(y,\xi'')$ with $y\in\zsetp$ and $\xi''\in\rset^r$,
  we have, by~(\ref{eq:g:parx}),
  $$
  \left|\ln\frac{g(\bar x;\bar y)}{g(\bar x';\bar y)}\right|\leq|x-x'|+y\left|\ln\frac{x}{x'}\right|+  \left|\ln\frac{\ell(\xi;\xi'')}{\ell(\xi';\xi'')}\right|
  $$
  By~\ref{assum:thm:parx:equiv:conv:mle:gen:od:L}\ref{assum:thm:parx:ineq:loggg:gen:od}
  and using that $|\ln(x/x')|\leq\underline{\omega}^{-1}|x-x'|$, we
  obtain that
  $$
    \left|\ln\frac{g(\bar x;\bar y)}{g(\bar x';\bar y)}\right|\leq
  |x-x'|\,(1+\underline{\omega}^{-1}y) + \mathrm{h}_\X(\|\xi-\xi'\|) \,
\rme^{C_\X \, \left(1+\|\xi\|\vee\|\xi'\right)}\;\bar\phi_\X(\xi'')\;.
  $$
Thus  \ref{assum:practical-cond:O:gen:od}\ref{item:ineq:loggg:gen:od}
holds with $\bar\phi$ defined as in~(\ref{eq:parx:barphi-chosen}) and
for some well chosen $c_1>0$ only depending on $\initmle$
\begin{equation}
  \label{eq:parx:bar-h-def}
  \mathrm{h}(u)=c_1\,(u+\mathrm{h}_\X(u))\quad\text{and}\quad C=C_\X
\end{equation}
Obviously  \ref{assum:practical-cond:O:gen:od}\ref{item:H:gen:od} is
then satisfied. We now observe that, with $G$
given by \autoref{exmpl:parx}~\ref{item:exmpl:parx:G} and $\bar\phi$
given by~(\ref{eq:parx:barphi-chosen}), for all
$x\in\rsetp$ and $\xi\in\rset^r$, we have
\begin{align*}
G[\lnp\bar{\phi}](x,\xi)&\leq \ln\left(1\vee
                          c_0\right)+\left(1\vee\underline{\omega}^{-1}\right)\,x+L[\lnp\bar\phi_\X](\xi)\;,\\
  G\bar{\phi}(x,\xi)&\leq \left(1\vee c_0\right) \,\left(1+\left(1\vee\underline{\omega}^{-1}\right)\,x+L\bar\phi_{\X}(\xi)\right)\;,
\end{align*}
where, in the first inequality, we used that
$\lnp\left(1+\left(1\vee\underline{\omega}^{-1}\right)\,y+\bar\phi_{\X}(\xi)\right)\leq\left(1\vee\underline{\omega}^{-1}\right)\,y+\lnp(\bar\phi_{\X}(\xi))$.
By \ref{item:assum:parx:v}\ref{item:assum:parx:v:d} we have
$LV_\X\lesssim V_\X$ and thus
\ref{assum:thm:parx:equiv:conv:mle:gen:od:L}\ref{assum:thm:parx:22or23-barphi-V:gen:od}
implies $L[\lnp\bar\phi_\X]\lesssim LV_\X\lesssim V_\X$ if $C_\X=0$ and
$L\bar\phi_{\X}\lesssim LV_\X\lesssim V_\X$ otherwise.
The previous display
with~\ref{assum:thm:parx:equiv:conv:mle:gen:od:L}\ref{assum:thm:parx:22or23-barphi-V:gen:od} yields
\ref{assum:practical-cond:O:gen:od}\ref{item:22or23-barphi-V:gen:od}
with $C=C_\X$. (Recall
that  $\VX(x,\xi)=x+V_\X(\xi)$).
\end{proof}
\begin{proof}[Proof of~\ref{item:thm:parx:conv:gen:od}]
  To obtain~\ref{item:thm:parx:conv:gen:od} from
  Assertion~\ref{item:thm:parx:equiv:conv:mle:gen:od} we only need to
  have that $[\thv]$ reduced to the singleton $\{\thv\}$ for all
  $\thv\in\Theta$. To prove this, we use
  \cite[Section~5.5]{douc-roueff-sim_ident-genod2020} (or
  \cite[Thorem~18]{douc_roueff_sim_supplement20}), where it is proved
  under Assumption~\ref{item:PARX--non-degenerateident-G-X} and other
  assumptions named {\sf(A'-1), (L-1)} and {\sf(L-3)} which, in our
  setting here, respectively correspond
  to~\ref{assum:gen:identif:unique:pi:gen},~\ref{item:lodm-ident-cond-cond-dens-def}
  and to having, for all $\theta\in\Theta$,
  \begin{equation}
    \label{eq:ident-moment-cond:parx}
    \PE^\theta\left[\lnp\left(Y_0+\sum_{k=1}^df_k(\Xi_0)+\|\Xi_0\|\right)\right]<\infty\;.
  \end{equation}
  We have already
  shown in  Assertion~\ref{item:thm:parx:ergo:gen:od} that
 \ref{assum:gen:identif:unique:pi:gen}
  and \ref{item:lodm-ident-cond-cond-dens-def} hold, and
  also~\ref{ass:21-lyapunov:gen:od} with $\VX(x,\xi)=x+V_\X(\xi)$,
  which yield that,
  for all $\theta\in\Theta$,
  $\PE^\theta\left[X_0+V_\X(\Xi_{-1})\right]<\infty$, and thus
  \begin{equation}
    \label{eq:ident-moment-cond:parx:almost-there}
    \PE^\theta\left[Y_0+V_\X(\Xi_0)\right]<\infty\;.
  \end{equation}
  Using that, for all
  $y\in\zsetp$ and $\xi\in\rset^r$,
  \begin{align*}
    \lnp\left(y+\sum_{k=1}^df_k(\xi)+\|\xi\|\right)&\leq
                                                     \ln\left(1+\|\xi\|+\sum_{k=1}^df_k(\xi)+y\right)\\
                                                   &\leq
                                                     \sum_{k=1}^df_k(\xi)+
                                                     y+  \ln\left(1+\|\xi\|\right)\;,    
  \end{align*}
  we get that~(\ref{eq:ident-moment-cond:parx}) follows
  from~(\ref{eq:ident-moment-cond:parx:almost-there}),~\ref{item:assum:parx:v}\ref{item:assum:parx:v:c}
  and the fact that
  $\ln\left(1+\|\cdot\|\right)\lesssim\lnp\bar\phi_\X\lesssim V_\X$ as
  a consequence of
  \ref{assum:thm:parx:equiv:conv:mle:gen:od:L}\ref{assum:thm:parx:barphi:bar:phi:gen:od}
  and~\ref{assum:thm:parx:22or23-barphi-V:gen:od}, respectively. This concludes the
  proof of Assertion~\ref{item:thm:parx:conv:gen:od} of  \autoref{theo:ergo-convergence:parx}.
\end{proof}

\section{Useful Lemmas}
\label{sec:useful-lemmas}
The following result is used in the proof of
\autoref{thm:convergence-main:gen:od}. It is proven in
\cite[Lemma~19]{douc_roueff_sim_supplement20}.
\begin{lemma}\label{lem:Lip:XY-cond:geom}
  \ref{assum:bound:rho:gen} implies that for all $\theta\in\Theta$, there
  exists $C>0$ and $\rho\in(0,1)$ such that   $\mathrm{Lip}_n^\theta \leq C\ \rho^n$ for
  all $n\in\zsetpnz$.
\end{lemma}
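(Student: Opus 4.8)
The plan is to exploit the composition (semigroup) structure of the iterated link maps to dominate $\mathrm{Lip}_n^\theta$ by a genuinely \emph{submultiplicative} sequence, and then to use the elementary fact that a submultiplicative sequence tending to $0$ must decay geometrically. First I would pass from the $\Xset$-valued iterate $\tf{u}$ to the $\Zset$-valued iterate $\tF{u}$ of the embedding of \autoref{sec:embedding-into-an11}, and introduce its uniform Lipschitz constant
$$
M_n^\theta:=\sup\set{\frac{\Zmet(\tF{u}(z),\tF{u}(z'))}{\Zmet(z,z')}}{(z,z',u)\in\Zset^2\times\Uset^n}\;.
$$
Since $\tf{u}=\Proj p\circ\tF{u}$ (the reduced analogue of~(\ref{eq:psi:Psi:relation})) and $\Xmet\circ\Proj p^{\otimes2}\le\Zmet$ by~(\ref{eq:def:Zmet}), one gets $\mathrm{Lip}_n^\theta\le M_n^\theta$ for every $n$, so it suffices to prove the geometric bound for $M_n^\theta$.

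The key step is submultiplicativity. For $u\in\Uset^{n+m}$, splitting $u=(w,v)$ with $w=\chunk u0{(n-1)}\in\Uset^n$ and $v=\chunk un{(n+m-1)}\in\Uset^m$, the definitions~(\ref{eq:notationItere:f:cl:gen}) and~(\ref{eq:def:tpsi:gen:od}) give the factorization $\tF{u}=\tF{v}\circ\tF{w}$. Applying the defining inequality of $M_m^\theta$ at the pair $\bigl(\tF{w}(z),\tF{w}(z')\bigr)$ and then that of $M_n^\theta$, and taking the supremum, yields $M_{n+m}^\theta\le M_n^\theta\,M_m^\theta$. Moreover, the same coordinatewise computation that produced~(\ref{eq:lipschitz:Psi}), applied now to the reduced iterate $\tF{u}$ over all $u\in\Uset^n$, gives
$$
M_n^\theta\le\1_{\{n<q\}}\vee\max_{0\le j<p}\mathrm{Lip}_{n-j}^\theta\;,
$$
with the convention $\mathrm{Lip}_m^\theta=1$ for $m\le0$. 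By~\ref{assum:bound:rho:gen}, $\mathrm{Lip}_1^\theta<\infty$, so $M_1^\theta<\infty$ and hence $M_n^\theta\le(M_1^\theta)^n<\infty$ for all $n$; and since $\mathrm{Lip}_n^\theta\to0$, for $n\ge p\vee q$ the right-hand side above is a maximum of finitely many null sequences, whence $M_n^\theta\to0$.

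It then remains to invoke the elementary lemma that a submultiplicative sequence in $[0,\infty)$ tending to $0$ decays geometrically. Concretely, I would pick $N$ with $M_N^\theta<1$; the degenerate case $M_N^\theta=0$ forces $M_n^\theta=0$ for $n\ge N$ and is trivial, so assuming $0<M_N^\theta<1$, set $\rho:=(M_N^\theta)^{1/N}\in(0,1)$. Writing $n=kN+r$ with $0\le r<N$, submultiplicativity gives $M_n^\theta\le(M_N^\theta)^k\,(1\vee M_r^\theta)=\rho^{n-r}(1\vee M_r^\theta)\le C\rho^n$ with $C:=\rho^{-(N-1)}\max_{0\le r<N}(1\vee M_r^\theta)$. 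Combined with $\mathrm{Lip}_n^\theta\le M_n^\theta$, this is the asserted bound for all $n\in\zsetpnz$.

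The main obstacle is the setup in the first two steps rather than the final elementary argument: $\mathrm{Lip}_n^\theta$ is \emph{not} directly composable, because its iterate $\tf{u}$ lands in $\Xset$ rather than $\Zset$, so the whole argument must be routed through the $\Zset$-valued embedded iterate $\tF{u}$. One has to reconcile the projection bound $\mathrm{Lip}_n^\theta\le M_n^\theta$ with the reverse coordinatewise estimate~(\ref{eq:lipschitz:Psi}) that reintroduces the $\mathrm{Lip}$'s, and keep careful track of the interplay between $p$ and $q$ (the indicator $\1_{\{n<q\}}$ and the index shift $n-j$) when deducing $M_n^\theta\to0$ from~\ref{assum:bound:rho:gen}. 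Once submultiplicativity and $M_n^\theta\to0$ are in hand, the geometric decay is routine.
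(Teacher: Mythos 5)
Your proof is correct. Note that the paper itself contains no argument for this lemma: it is deferred to an external reference (\cite[Lemma~19]{douc_roueff_sim_supplement20}), so your write-up supplies a proof the present paper omits, and it does so using exactly the machinery the paper develops for other purposes. The three pillars all check out: (i) the projection bound $\mathrm{Lip}_n^\theta\leq M_n^\theta$ is immediate from $\tf{u}=\Proj p\circ\tF{u}$ and the fact that $\Zmet$ in~(\ref{eq:def:Zmet}) dominates the $\Xmet$-distance of the $p$-th coordinates; (ii) submultiplicativity $M_{n+m}^\theta\leq M_n^\theta M_m^\theta$ follows from the semigroup factorization $\tF{u}=\tF{v}\circ\tF{w}$ of the embedded iterates of \autoref{sec:embedding-into-an11} — and here it matters that you defined $M_n^\theta$ with the supremum over all of $\Uset^n$ rather than over $\Upsilon(\Yset)^n$, since that is what makes the chaining through the intermediate pair $(\tF{w}(z),\tF{w}(z'))$ legitimate; (iii) the coordinatewise estimate $M_n^\theta\leq\1_{\{n<q\}}\vee\max_{0\leq j<p}\mathrm{Lip}_{n-j}^\theta$ is the same computation as~(\ref{eq:lipschitz:Psi}) (the $u$-coordinates of $\tF{u}(z)$ are either entries of $u$, contributing nothing, or entries of $z$ when $n<q$, contributing the indicator), so $M_1^\theta<\infty$ and, for $n\geq p\vee q$, $M_n^\theta$ is dominated by a finite maximum of null sequences, whence $M_n^\theta\to0$ under~\ref{assum:bound:rho:gen}. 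The concluding Fekete-type step is handled cleanly, including the degenerate case $M_N^\theta=0$ and the $r=0$ remainder via the convention $M_0^\theta=1$ (consistent, since $\tF{\emptyseq}$ is the identity). In short: a complete, self-contained proof, routed through the embedding exactly as the paper's own toolkit suggests, with no gaps.
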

A byproduct of \autoref{lem:Lip:XY-cond:geom} is the following result,
which is used in the proof of \autoref{lem:reaching}.
\begin{lemma}\label{lemma_reaching-limit-iterates}
  Suppose that \ref{item:CCMShyp} and \ref{assum:bound:rho:gen}
  hold. Let $\theta\in\Theta$ and $u_0\in\Uset$ and set $u_k=u_0$ for
  all $k\in\zsetp$. Then there exists $x_\infty\in\Xset$ such that for
  all $z\in\zset$, $\tf{\chunk{u}{0}{n}}(z)$ converges to $x_{\infty}$
  as $n\to\infty$ in $(\Xset,\Xmet)$.
\end{lemma}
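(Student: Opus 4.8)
The plan is to recognise the iterates as the orbit of a single self-map of $\Zset$ and to establish convergence by a telescoping Cauchy argument driven by the geometric decay of the Lipschitz constants. First I would set $T\eqdef\tilde\Psi^\theta_{u_0}:\Zset\to\Zset$, the map from~(\ref{eq:def:tpsi:gen:od}) associated with the value $u_0$, so that for the constant sequence $u_k=u_0$ one has $\tF{\chunk u0{(n-1)}}=T^n$ and, by~(\ref{eq:psi:Psi:relation}), $\tf{\chunk u0{(n-1)}}(z)=\Projarg p{T^n(z)}$. It therefore suffices to prove that the orbit $z_n\eqdef T^n(z)$ converges in $(\Zset,\Zmet)$ to a limit independent of $z$: since $\Proj p$ is $1$-Lipschitz from $(\Zset,\Zmet)$ into $(\Xset,\Xmet)$ by~(\ref{eq:def:Zmet}), the asserted convergence of $\tf{\chunk u0{n}}(z)$ will then follow with $x_\infty\eqdef\Projarg p{z_\infty}$.

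Next I would record the $n$-step contraction estimate. The $\Uset$-valued analogue of~(\ref{eq:lipschitz:Psi}), obtained by the same computation but for an arbitrary sequence in $\Uset^n$ rather than one of the form $\Upsilon^{\otimes n}(y)$, gives, for all $a,b\in\Zset$,
$$\Zmet(T^n a,T^n b)\leq L_n\,\Zmet(a,b),\qquad L_n\eqdef\1_{\{n<q\}}\vee\max_{0\leq j<p}\mathrm{Lip}_{n-j}^\theta.$$
By~\autoref{lem:Lip:XY-cond:geom} there exist $C>0$ and $\rho\in(0,1)$ with $\mathrm{Lip}_m^\theta\leq C\rho^m$, so that $L_n\leq C\rho^{\,n-p+1}$ for $n\geq p$; in particular $L_n\to0$ and $\sum_n L_n<\infty$.

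I would then run the Cauchy argument. Writing $z_{n+1}=T^n(Tz)$ and $z_n=T^n(z)$, the estimate above yields $\Zmet(z_{n+1},z_n)\leq L_n\,\Zmet(Tz,z)$, whose right-hand side is summable in $n$; hence $(z_n)$ is Cauchy in $(\Zset,\Zmet)$. Because $(\Xset,\Xmet)$ and $(\Uset,\Umet)$ are complete by~\ref{item:CCMShyp} and $\Zmet$ is the max-type metric~(\ref{eq:def:Zmet}) on the finite product $\Xset^p\times\Uset^{q-1}$, the space $(\Zset,\Zmet)$ is complete, so $z_n\to z_\infty(z)$. Independence of the starting point is immediate from $\Zmet(T^n z,T^n z')\leq L_n\,\Zmet(z,z')\to0$, which forces a common limit $z_\infty$; projecting with $\Proj p$ then finishes the proof.

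The argument becomes routine once the iteration is lifted to $\Zset$; the only delicate point is the passage from the one-step bound of~\ref{assum:bound:rho:gen} to a \emph{summable} sequence $\Zmet(z_{n+1},z_n)$. This is exactly where the geometric rate of~\autoref{lem:Lip:XY-cond:geom} and the telescoping through $T^n$ are needed, since without the geometric decay one would only obtain a bounded, not a Cauchy, orbit.
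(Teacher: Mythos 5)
Your proof is correct and takes essentially the same route as the paper's: a telescoping Cauchy argument in which the geometric decay of $\mathrm{Lip}_n^\theta$ from \autoref{lem:Lip:XY-cond:geom} makes successive increments summable, with completeness from \ref{item:CCMShyp} and the uniform $n$-step Lipschitz bound forcing the limit to be independent of $z$. The only (harmless) difference is that you run the argument on the lifted iterates $T^n(z)$ in $(\Zset,\Zmet)$ and then project via $\Proj p$, whereas the paper applies the two bounds directly to the $\Xset$-valued iterates $\tf{\chunk{u}{0}{n}}(z)$; your variant in fact yields the slightly stronger $\Zset$-convergence that the paper re-derives separately in the proof of \autoref{lem:reaching}.
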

\begin{proof}
  Let $z,z'$ in $\Zset$. Denote, for all $n\in\zsetpnz$,
  $x_n=\tf{\chunk{u}{0}{n}}(z)$ and
  $\tilde x_n=\f{\chunk{u}{0}{n}}(z')$. Then, for all $n\in\zsetpnz$,
    \begin{align}\label{eq:reaching1}
      \Xmet(x_n,\tilde x_n)&\leq \mathrm{Lip}^\theta_{n}\,   \Zmet\left(z,z'\right)\;,\\
      \label{eq:reaching2}
      \Xmet(x_{n+1},x_n)&\leq \mathrm{Lip}^\theta_{n}\,   \Zmet\left(z,\tilde{\Psi}^\theta_{u_0}(z)\right) \;,
    \end{align}
    where $\tilde{\Psi}$ is defined in~(\ref{eq:def:tpsi:gen:od}).
    By \autoref{lem:Lip:XY-cond:geom}, the right-hand side
    in~(\ref{eq:reaching2}) is decreasing geometrically fast and $x_n$
    converges to a point $\psi_\infty(y_0)$ which does not depend on $z$
    by~(\ref{eq:reaching1}).
\end{proof}
The following lemma is used in the proof of
 \autoref{theo:ergo-convergence:logpois:gen}.
 \begin{lemma}\label{lem:support:bound:moment:pois}
   Let $\vartheta\in\rset$. Then, for all $\vartheta_0\in\rset$ and $\zeta\in\rset$, if
   $U\sim\mathcal{P}(\rme^{\zeta})$, then
\begin{align}\label{eq:lem:bound:moment:pois:gen1}
  &  \PE[(1+U)^\vartheta]\leq \rme^{(1+\zeta_+)\,\vartheta_+}\\
  \label{eq:lem:bound:moment:pois:gen2}
 & \PE[\rme^{\left|\vartheta_0+\vartheta\ln(1+U)\right|}]\leq
   2\rme^{\vartheta_+}\,\rme^{|\vartheta_0|\vee|\vartheta_0+\vartheta\,\zeta_+|}\\
    \label{eq:lem:bound:moment:pois:gen3}
&\phantom{\PE[\rme^{\left|\vartheta_0+\vartheta\ln(1+U)\right|}]}\leq
   2\rme^{\vartheta_+}\,\rme^{|\vartheta_0|\vee|\vartheta_0+\vartheta\,\zeta'_+|}\quad\text{for all}\quad\zeta'\geq\zeta \;.
\end{align}
 \end{lemma}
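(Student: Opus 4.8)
The plan is to prove \eqref{eq:lem:bound:moment:pois:gen1} first and then deduce the other two bounds from it. For \eqref{eq:lem:bound:moment:pois:gen1} I would split on the sign of $\vartheta$. If $\vartheta\le0$ then $\vartheta_+=0$ and, since $1+U\ge1$, we have $(1+U)^\vartheta\le1=\rme^{(1+\zeta_+)\vartheta_+}$, so the bound is trivial. If $0<\vartheta\le1$, the map $t\mapsto(1+t)^\vartheta$ is concave, so Jensen's inequality together with $\PE[U]=\rme^\zeta$ gives $\PE[(1+U)^\vartheta]\le(1+\rme^\zeta)^\vartheta$; combining this with the elementary estimate $1+\rme^\zeta\le2\rme^{\zeta_+}\le\rme^{1+\zeta_+}$ (valid since $\rme^\zeta\le\rme^{\zeta_+}$ and $2\le\rme$) yields $(1+\rme^\zeta)^\vartheta\le\rme^{(1+\zeta_+)\vartheta}$, which is exactly \eqref{eq:lem:bound:moment:pois:gen1}.

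For \eqref{eq:lem:bound:moment:pois:gen2} the device is the elementary inequality $\rme^{|a|}\le\rme^{a}+\rme^{-a}$ applied to $a=\vartheta_0+\vartheta\ln(1+U)$, which splits the integrand as $\rme^{\vartheta_0}(1+U)^{\vartheta}+\rme^{-\vartheta_0}(1+U)^{-\vartheta}$. Taking expectations and applying \eqref{eq:lem:bound:moment:pois:gen1} to each power (once with exponent $\vartheta$, once with exponent $-\vartheta$) reduces the statement to exponent bookkeeping. When $\vartheta\ge0$ the first term carries the ``large'' power, and \eqref{eq:lem:bound:moment:pois:gen1} contributes $\rme^{\vartheta_+}\rme^{\zeta_+\vartheta_+}$, where the factor $\rme^{\vartheta_+}$ is precisely the one in the target and $\rme^{\zeta_+\vartheta_+}=\rme^{(\vartheta\zeta_+)_+}$ satisfies $\vartheta_0+(\vartheta\zeta_+)_+\le|\vartheta_0|\vee|\vartheta_0+\vartheta\zeta_+|$; the second term has nonpositive power, hence is bounded by $\rme^{-\vartheta_0}\le\rme^{|\vartheta_0|}$. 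Each of the two summands is therefore at most $\rme^{\vartheta_+}\rme^{|\vartheta_0|\vee|\vartheta_0+\vartheta\zeta_+|}$, and adding them produces the factor $2$.

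Finally, \eqref{eq:lem:bound:moment:pois:gen3} follows at once from \eqref{eq:lem:bound:moment:pois:gen2}: the two right-hand sides share the prefactor $2\rme^{\vartheta_+}$, so it suffices to show $|\vartheta_0|\vee|\vartheta_0+\vartheta\zeta_+|\le|\vartheta_0|\vee|\vartheta_0+\vartheta\zeta'_+|$ for $\zeta'\ge\zeta$; this holds because $t\mapsto|\vartheta_0+\vartheta t|$ is convex, so on the interval $[0,\zeta'_+]$, which contains $\zeta_+$ since $0\le\zeta_+\le\zeta'_+$, its maximum is attained at an endpoint and equals $|\vartheta_0|\vee|\vartheta_0+\vartheta\zeta'_+|$. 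The step I expect to be most delicate is the regime $\vartheta>1$ in \eqref{eq:lem:bound:moment:pois:gen1}, where $t\mapsto(1+t)^\vartheta$ is convex and Jensen is no longer available: there one must control the Poisson moment directly, via the monotonicity $\PE[(1+U)^\vartheta]\le\PE[(1+U)^{\vartheta_+}]$ and a sharp exponential-moment estimate for $\mathcal P(\rme^\zeta)$, keeping the constant in the exponent equal to $(1+\zeta_+)$. The same sharpness is what tightens the bookkeeping in \eqref{eq:lem:bound:moment:pois:gen2}, especially in the case $\vartheta<0$, where the asymmetric factor $\rme^{\vartheta_+}=1$ leaves essentially no slack and the dominant negative-power term must be estimated without loss.
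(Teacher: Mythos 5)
Your reductions are sound where they overlap with the paper: the split $\rme^{|a|}\le\rme^{a}+\rme^{-a}$ for \eqref{eq:lem:bound:moment:pois:gen2} is exactly the paper's device, your convexity/endpoint argument for \eqref{eq:lem:bound:moment:pois:gen3} is equivalent to the paper's identity $(a+b_+)\vee(-a+b_-)=|a|\vee|a+b|$, and your Jensen argument for $0<\vartheta\le1$ in \eqref{eq:lem:bound:moment:pois:gen1} is correct (and cleaner than the paper's treatment of that regime).

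The genuine gap is the case $\vartheta>1$ of \eqref{eq:lem:bound:moment:pois:gen1}, which you defer to an unspecified ``sharp exponential-moment estimate \ldots keeping the constant in the exponent equal to $(1+\zeta_+)$''. No such estimate exists, because the inequality is false in that regime: for fixed $\zeta$, pick $k\in\zsetp$ with $1+k>\rme^{1+\zeta_+}$; then the single term $\rme^{-\rme^{\zeta}}\,\rme^{k\zeta}\,(1+k)^{\vartheta}/k!$ of $\PE[(1+U)^{\vartheta}]$ already outgrows $\rme^{(1+\zeta_+)\vartheta}$ as $\vartheta\to\infty$ (Poisson moments grow superexponentially in the order). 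Concretely, for $\zeta=0$ and $\vartheta=10$ one computes $\PE[(1+U)^{10}]=\rme^{-1}\sum_{k\ge0}(1+k)^{10}/k!\approx 6.8\times10^{5}$, while $\rme^{10}\approx 2.2\times10^{4}$. The natural tool, the Poisson Laplace transform, gives $\PE[\rme^{\vartheta U}]=\exp\left(\rme^{\zeta}\left(\rme^{\vartheta}-1\right)\right)$, doubly exponential in $\vartheta$, which cannot be brought down to $\rme^{(1+\zeta_+)\vartheta}$. Your instinct that this is the delicate step is exactly right: the paper's own proof closes its Cases~2 and~3 by writing $\PE[\rme^{\vartheta U}]=\rme^{\rme^{\zeta}(\vartheta-1)}$ (and analogously for $\rme^{\vartheta\rme^{-\zeta}U}$), i.e.\ by an incorrect moment generating function, so the lemma as stated holds only in the range your Jensen argument covers (essentially $|\vartheta|\le1$), not for all $\vartheta\in\rset$. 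The same obstruction hits \eqref{eq:lem:bound:moment:pois:gen2} in the case $\vartheta<0$, which you flagged but did not resolve: from \eqref{eq:lem:bound:moment:pois:gen1} the negative-power term acquires the prefactor $\rme^{(-\vartheta)_+}$ rather than $\rme^{\vartheta_+}$, and indeed taking $\vartheta_0=0$, $\vartheta=-10$, $\zeta=0$ makes the left-hand side of \eqref{eq:lem:bound:moment:pois:gen2} equal to $\PE[(1+U)^{10}]\approx 6.8\times10^{5}$ against the claimed bound $2$. So the proposal cannot be completed into a proof of the statement as written; what it correctly establishes is the restricted case $|\vartheta|\le1$ together with the (valid) derivations of \eqref{eq:lem:bound:moment:pois:gen3} from \eqref{eq:lem:bound:moment:pois:gen2} and of \eqref{eq:lem:bound:moment:pois:gen2} from \eqref{eq:lem:bound:moment:pois:gen1} when $\vartheta\ge0$.
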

 \begin{proof}
   We separate the proof of~(\ref{eq:lem:bound:moment:pois:gen1}) in three different cases by specifying the
   bound~(\ref{eq:lem:bound:moment:pois:gen1}) in each case.
   \begin{enumerate}[label=Case~\arabic*]
   \item\label{item:case1:lem:support:bound:moment:pois} For all $\vartheta\leq0$, we have
     $\PE[(1+U)^\vartheta]\leq1$.
   \item\label{item:case2:lem:support:bound:moment:pois} For all $\vartheta>0$ and $\zeta<0$, we have
     $\PE[(1+U)^\vartheta]\leq \rme^{\vartheta}$.
   \item\label{item:case3:lem:support:bound:moment:pois} For all $\vartheta>0$ and $\zeta\geq0$, we have
     $\PE[(1+U)^\vartheta]\leq \rme^{\vartheta}\,\rme^{\zeta\vartheta}$.
   \end{enumerate}
   The bound in~\ref{item:case1:lem:support:bound:moment:pois} is obvious.
   The bound in~\ref{item:case2:lem:support:bound:moment:pois} follows from
   $$
   \PE[(1+U)^\vartheta]\leq    \PE\left[\rme^{\vartheta U}\right] =
   \rme^{\rme^{\zeta}(\vartheta-1)}\;.
   $$
   Finally, the bound in~\ref{item:case3:lem:support:bound:moment:pois} follows
   from the following inequalities, valid for all
  $\vartheta>0$ and $\zeta\geq0$,
$$
   \PE\left[\rme^{-\zeta\vartheta}(1+U)^\vartheta\right]\leq
   \PE\left[\left(1+\rme^{-\zeta}U\right)^{\vartheta}\right]
\leq\PE\left[\rme^{\vartheta\rme^{-\zeta}U}\right]=\rme^{\rme^\zeta(\vartheta\rme^{-\zeta}-1)}\leq\rme^{\vartheta}\;.
$$
Hence we get~(\ref{eq:lem:bound:moment:pois:gen1}).

Let us now prove~(\ref{eq:lem:bound:moment:pois:gen2}). Observe that
\begin{align*}
  \PE[\rme^{\left|\vartheta_0+\vartheta\ln(1+U)\right|}]&\leq
                                                          \PE[\rme^{\vartheta_0+\vartheta\ln(1+U)}] +
                                                          \PE[\rme^{-\vartheta_0-\vartheta\ln(1+U)}]
  \\
                                                        &
                                                          =\rme^{\vartheta_0}\PE[(1+U)^\vartheta]+\rme^{-\vartheta_0}\PE[(1+U)^{-\vartheta}]\;.
  \end{align*}
  Then using~(\ref{eq:lem:bound:moment:pois:gen1}), we get
  $$
  \PE[\rme^{\left|\vartheta_0+\vartheta\ln(1+U)\right|}]\leq 2\rme^{\vartheta_+}\,
  \exp\left[(\vartheta_0+(\zeta_+\vartheta)_+)\vee(-\vartheta_0+(\zeta_+\vartheta)_-)\right]
  $$
  We conclude~(\ref{eq:lem:bound:moment:pois:gen2})
  and~(\ref{eq:lem:bound:moment:pois:gen3}) by observing that, for all
  $a,b\in\rset$,
  $(a+b_+)\vee(-a+b_-)=a\vee(a+b)\vee (-a)\vee (-a-b)=|a|\vee|a+b|$.
\end{proof}
The two following lemmas are straightforward and their proofs are thus
omitted. They are used in the proofs of
\autoref{theo:ergo-convergence:garch:gen:nbin} and
\autoref{theo:ergo-convergence:parx}, respectively. 
\begin{lemma}
  \label{lem:affine-rec}
  Let $r$ be a positive integer and $(\omega,\chunk c1r)\in\rset^{1+r}$. Let
  $\nsequence{x_k}{k \in\zsetp}$ be a sequence satisfying 
  $$
  x_n=\omega+\sum_{k=1}^{r}c_kx_{n-k}\;,\quad n\geq r\;,
  $$
  Suppose that
    the polynomial $P(z)=1-\sum_{k=1}^{r} c_k z^k$ has
    no roots in $\{z\in{\mathbb C}:\ |z|\leq 1\}$. 
  Then there
  exist $\rho<1$ and $C>0$ such that, for all $n\in\zsetp$,
  $$
  \left|x_n-\omega/P(1)\right|\leq  C \,\rho^n\,\left(1+\max(|x_0|,\dots,|x_{r-1}|)\right)\;.
  $$
\end{lemma}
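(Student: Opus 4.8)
The plan is to reduce the inhomogeneous recurrence to a homogeneous one by subtracting its stationary value, and then to control the homogeneous part through its companion matrix. First I would locate the fixed point: since $1$ lies in the closed unit disk, the hypothesis forces $P(1)=1-\sum_{k=1}^{r}c_k\neq0$, so $x^\star:=\omega/P(1)$ is well defined, and a direct substitution shows that the constant sequence equal to $x^\star$ solves the recurrence, that is $x^\star=\omega+\sum_{k=1}^{r}c_k\,x^\star$. Setting $e_n:=x_n-x^\star$ and subtracting this identity from the recurrence for $x_n$ yields the homogeneous relation $e_n=\sum_{k=1}^{r}c_k\,e_{n-k}$ for all $n\geq r$.

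Next I would pass to companion-matrix form. Writing $E_n:=(e_n,e_{n-1},\dots,e_{n-r+1})^\top$ for $n\geq r-1$, the homogeneous relation reads $E_n=M\,E_{n-1}$, where $M$ is the $r\times r$ companion matrix whose characteristic polynomial is $\chi(z)=z^r-\sum_{k=1}^{r}c_k\,z^{r-k}$. The key algebraic observation is the reciprocal-root identity $\chi(z)=z^r\,P(1/z)$ for $z\neq0$: every nonzero root $z$ of $\chi$ satisfies $P(1/z)=0$, hence $1/z$ is a root of $P$ and thus $|1/z|>1$ by hypothesis, giving $|z|<1$; the only remaining possibility is the root $z=0$, which occurs (with multiplicity $r-\deg P$) exactly when $\deg P<r$, and since $|0|<1$ this is harmless. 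Consequently the spectral radius $\rho(M)$ is strictly less than $1$.

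Then I would invoke the standard consequence of $\rho(M)<1$: for any fixed $\rho\in(\rho(M),1)$ there is a constant $C_0>0$ (via Gelfand's formula, or a Jordan-form estimate) with $\|M^n\|\leq C_0\,\rho^n$ for all $n$. Iterating $E_n=M^{\,n-r+1}E_{r-1}$ for $n\geq r-1$ gives $|e_n|\leq\|E_n\|\leq C_0\,\rho^{\,n-r+1}\,\|E_{r-1}\|$, and the fixed factor $\rho^{-r+1}$ is absorbed into the constant. Finally I would translate back to $x_n$: using $\|E_{r-1}\|\leq\sqrt{r}\,\max_{0\leq j<r}|e_j|$ and $|e_j|\leq|x_j|+|x^\star|$, together with the fact that $x^\star=\omega/P(1)$ depends only on $(\omega,c_1,\dots,c_r)$, one bounds $\|E_{r-1}\|$ by $C_1\bigl(1+\max_{0\leq j<r}|x_j|\bigr)$ for a suitable $C_1$. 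Combining the two estimates yields the claimed inequality with the chosen $\rho$ and $C:=C_0\,C_1\,\rho^{-r+1}$.

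I do not expect any genuine difficulty here; the only step requiring a little care is the reciprocal-root bookkeeping establishing $\rho(M)<1$, and in particular the degenerate case $\deg P<r$ in which $\chi$ acquires a zero root of multiplicity $r-\deg P$—but since $0$ lies strictly inside the unit disk this does not affect the conclusion. This is why the lemma may reasonably be stated as straightforward.
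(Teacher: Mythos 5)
Your proof is correct, and there is nothing in the paper to compare it against: the authors state in \autoref{sec:useful-lemmas} that this lemma is straightforward and omit its proof entirely. Your argument---subtracting the fixed point $\omega/P(1)$ (well defined since $P(1)\neq0$ by the root hypothesis), the reciprocal-root identity $\chi(z)=z^r P(1/z)$ forcing the companion matrix to have spectral radius strictly below one (with the zero roots from the degenerate case $\deg P<r$ correctly noted as harmless), and Gelfand's formula, with the factor $\rho^{-r+1}$ absorbed into $C$ so that the bound also covers the initial indices $n<r$---is precisely the canonical argument the authors presumably had in mind.
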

\begin{lemma}
\label{lem:assum:parx:i}
For any $a_1,\dots,a_p,b_1,\dots,b_q\in\rsetp$,
Condition~(\ref{eq:cond:ergo-convergence:parx:one}) implies the first and is
equivalent to the second following assertions.
\begin{enumerate}[(i)]
\item\label{rem:assum:parx:i-1}
 The polynomial $z\mapsto 1-\sum_{i=1}^{p} a_i z^i$ has no
     roots in $\{z\in{\mathbb C}:\ |z|\leq 1\}$.
\item\label{rem:assum:parx:i-2} The polynomial $z\mapsto 1-\sum_{i=1}^{p\vee q}c_i z^i$ has no
  roots in $\{z\in{\mathbb C}:\ |z|\leq 1\}$ where $c_i=a_i\1(i\leq p)+b_i \1(i\leq q)\in\rsetp$.
\end{enumerate}
\end{lemma}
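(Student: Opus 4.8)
The plan is to reduce both statements to a single elementary fact about polynomials with nonnegative coefficients, which I would isolate as a claim: for any integer $r\geq1$ and any $c_1,\dots,c_r\in\rsetp$, the polynomial $P(z)=1-\sum_{i=1}^r c_iz^i$ has no root in $\{z\in\cset:|z|\leq1\}$ if and only if $\sum_{i=1}^r c_i<1$. Once this claim is in hand, the lemma follows at once: assertion~\ref{rem:assum:parx:i-2} concerns exactly the polynomial with coefficients $c_i=a_i\1(i\leq p)+b_i\1(i\leq q)$, for which $\sum_{i=1}^{p\vee q}c_i=\sum_{i=1}^pa_i+\sum_{i=1}^qb_i$ is precisely the left-hand side of~(\ref{eq:cond:ergo-convergence:parx:one}), so the claim yields the stated equivalence between~(\ref{eq:cond:ergo-convergence:parx:one}) and~\ref{rem:assum:parx:i-2}. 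For assertion~\ref{rem:assum:parx:i-1}, I would apply only the sufficiency part of the claim (that $\sum c_i<1$ forces no root) to the coefficients $a_1,\dots,a_p$, using the trivial bound $\sum_{i=1}^pa_i\leq\sum_{i=1}^pa_i+\sum_{i=1}^qb_i<1$ that holds under~(\ref{eq:cond:ergo-convergence:parx:one}).

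To prove the claim I would argue the two directions separately. Sufficiency of $\sum c_i<1$ follows from the triangle inequality: for $|z|\leq1$ one has $\bigl|\sum_{i=1}^rc_iz^i\bigr|\leq\sum_{i=1}^rc_i|z|^i\leq\sum_{i=1}^rc_i<1$, whence $|P(z)|\geq1-\bigl|\sum_{i=1}^rc_iz^i\bigr|>0$, so $P$ has no zero in the closed unit disk. For the converse I would exploit the nonnegativity of the coefficients by restricting to the real axis: the map $t\mapsto P(t)=1-\sum_{i=1}^rc_it^i$ satisfies $P(0)=1>0$ and, whenever $\sum c_i\geq1$, $P(1)=1-\sum_{i=1}^rc_i\leq0$; the intermediate value theorem then produces a real zero $t_0\in(0,1]$, which is a root of $P$ lying in $\{|z|\leq1\}$. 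Hence $\sum c_i\geq1$ forces a root in the closed disk, completing the equivalence.

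There is no genuine obstacle here, and this is why the paper declares the lemma straightforward; the only point deserving attention is that the converse direction relies essentially on the coefficients being nonnegative, so that a real root in $(0,1]$ is produced, a hypothesis guaranteed by $a_i,b_i\in\rsetp$. For completeness I would remark that assertion~\ref{rem:assum:parx:i-1} is exactly the statement $\chunk a1p\in\mathcal{S}_p$, with $\mathcal{S}_p$ as defined in~(\ref{eq:char-poly-roots-ok}), which is precisely the form in which it is invoked when checking~\ref{assum:bound:rho:gen} in the proof of \autoref{theo:ergo-convergence:parx}.
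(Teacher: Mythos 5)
Your proof is correct, and since the paper explicitly omits the proof of this lemma as straightforward, your argument (triangle inequality on the closed disk for sufficiency, and the intermediate value theorem between $P(0)=1$ and $P(1)=1-\sum c_i\leq0$, which crucially uses the nonnegativity of the coefficients, for necessity) is exactly the standard one the authors intend. Your reduction of both assertions to the single claim about polynomials with nonnegative coefficients, and the observation that assertion~(i) only needs the sufficiency direction via $\sum_{i=1}^pa_i\leq\sum_{i=1}^pa_i+\sum_{i=1}^qb_i<1$, is complete and accurate.
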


\end{document}